\documentclass[a4paper]{amsart}

\usepackage[english]{babel} 
\usepackage{fullpage}
\usepackage{afterpage}
\usepackage{hyperref}
\usepackage[T1]{fontenc}
\usepackage[utf8]{inputenc}
\usepackage{lmodern}
\usepackage{amsmath,amsthm,amsfonts}
\usepackage{mathrsfs} 
\usepackage{graphicx} 
\usepackage{multicol}

\usepackage{tikz} 
\usepackage{tikz-cd} 
\usetikzlibrary{patterns}
\usepackage{caption}
\usepackage{pgfplots} 
\usepackage{wrapfig}
\usepackage{amssymb}
\usepackage{array}
\usepackage{stackrel}
\usepackage{extarrows}
\usepackage{pgfplots}
\usepackage{mathtools}
\usepackage{algpseudocode}
\usepackage{algorithm}

\usepackage{amsmath,amsthm,amsfonts}
\usepackage{mathrsfs} 
\usepackage{graphicx} 
\usepackage{multicol} 
\usepackage{tikz} 
\usepackage{tikz-cd} 
\usepackage{caption}
\usepackage{pgfplots} 
\usepackage{wrapfig}
\usepackage{amssymb}
\usepackage{array}
\usepackage{stackrel}
\usepackage{extarrows}
\usepackage{pgfplots}
\usepgfplotslibrary{polar}
\pgfplotsset{compat=newest}
\usetikzlibrary{calc}
\usetikzlibrary{graphs}
\usetikzlibrary{fit}
\usepgfplotslibrary{polar}
\pgfplotsset{compat=newest}
\usetikzlibrary{calc}

\title{Cohomology of pure symmetric automorphisms of right-angled Artin groups}
\author{Peio Ardaiz-Gale}

\subjclass[2020]{Primary 20J06, 20F36; Secondary 57M07, 55P20}

\keywords{Cohomology of pure symmetric outer automorphism groups of right angled Artin groups}

\thanks{\noindent
The author is partially supported by the Spanish Government PID2021-126254NB-I00, PID2024-155800NB-C32, and also  by Universidad P{\'u}blica de Navarra (grant code: Plan de promoci{\'o}n de grupos. ``{\'A}lgebra. Aplicaciones'').} 

\newcommand{\Z}{\mathbb{Z}}
\newcommand{\G}{\Gamma}

\newcommand{\B}{\mathcal{B}}
\newcommand{\au}{\mathcal{A}}
\newcommand{\C}{\mathcal{C}}
\newcommand{\s}{\mathcal{S}}
\newcommand{\A}{{A_\Gamma}}

\newcommand{\PAut}{\Sigma\mathrm{PAut}}
\newcommand{\POut}{\Sigma\mathrm{POut}}

\newcommand{\IA}{\mathrm{IA}}

\newcommand{\MM}{\mathrm{MM}}

\newcommand{\Wh}{\mathrm{Wh}}

\newcommand{\lk}{\mathrm{lk}}
\newcommand{\st}{\mathrm{st}}
\newcommand{\supp}{\mathrm{supp}}
\newcommand{\suppa}{\mathrm{supp}(\au)}
\newcommand{\peri}{\mathrm{Peripheral}}
\newcommand{\peria}{\mathrm{Peripheral}(\au)}
\newcommand{\ca}{\mathcal{C}(\au)}
\newcommand{\Stab}{\mathrm{Stab}}
\newcommand{\Inn}{\mathrm{Inn}}

\newcommand{\OO}{ \mathcal{O}}
\newcommand{\boundellipse}[3]
{(#1) ellipse (#2 and #3)}
\newtheorem{teo}{Theorem}[section]
\newtheorem{prop}[teo]{Proposition}
\newtheorem{cor}[teo]{Corollary}
\newtheorem{lem}[teo]{Lemma}
\newtheorem{conj}[teo]{Conjecture}
\newtheorem*{teoA}{Theorem A}
\newtheorem*{teoB}{Theorem B}
\newtheorem*{teoC}{Theorem C}

\newtheorem*{conjA}{Conjecture A}

\theoremstyle{definition}
\newtheorem{defi}[teo]{Definition}
\newtheorem{ej}[teo]{Example}

\theoremstyle{remark}
\newtheorem*{nota}{Remark} 

\begin{document}

\begin{abstract}
We compute the cohomology groups of the pure symmetric outer automorphism group $\POut(A_\Gamma)$ and the pure symmetric automorphism group $\PAut(A_\Gamma)$ of a right-angled Artin group $A_\Gamma$. Using the equivariant spectral sequence arising from the action of $\POut(A_\G)$ on the generalized McCullough-Miller complex $\MM_\G$, we show that $H^q(\POut(A_\G))$ is free abelian and we compute its rank in terms of the combinatorics of certain poset. Applying the Lyndon-Hochschild-Serre spectral sequence and the Leray-Hirsch theorem we do the same for 
$H^q(\PAut(A_\G))$. In both cases the cohomology algebra is generated in degree 1. Finally, we introduce the Generalized Brownstein-Lee Conjecture, proposing a presentation of $H^*(\PAut(A_\G))$, and prove that it holds in dimension $2$.
\end{abstract}

\maketitle

\section{Introduction}
Right-angled Artin groups (RAAGs) and their automorphism groups are very active field of study in geometric group theory. Given a RAAG $\A$, the automorphisms of  $\A$ that map each standard generator to a conjugate of itself are called {\sl pure symmetric} and  form a subgroup denoted $\PAut(A_\G)$ that contains the subgroup of inner automorphism, so we can define the {\sl pure symmetric outer} automorphism group as $\POut(A_\G)\cong\PAut(\A)/\Inn(\A)$.   In the case when $A_\G$ is the free group $F_n$, $\PAut(F_n)$ is the fundamental group of the configuration space of $n$ unknotted, unlinked circles in the sphere $S^3$ \cite{{JMcCM}}, there is also a version of this for symmetric automorphisms of other right-angled Artin groups \cite{BB22}.  

In \cite{MM}, McCullough and Miller constructed a contractible simplicial  complex $\MM_n$  that admits an action of the group \( \POut(F_n) \) with free abelian stabilizers. The construction is based on the Whitehead poset $\Wh_n$, whose elements  can be expressed as  labeled trees. In \cite{JMcCM}, Jensen,  McCammond and  Meier use the McCullough-Miller complex to compute $H^*(\POut(F_n))$ and $H^*(\PAut(F_n))$, the  cohomology groups of  the (outer) pure symmetric automorphisms of the free group; even more, they prove the Brownstein-Lee conjecture \cite{BrLee}, which stablish a presentation of the cohomology ring $H^*(\PAut(F_n))$. 

In \cite{ArMar}, Ardaiz-Gale and Martínez-Pérez generalized both the Whitehead poset and the construction of McCullough and Miller from the case of \( \POut(F_n) \) to \( \POut(\A) \). This new complex, $\MM_\G$, admits an  action of the group \( \POut(\A) \) with free abelian stabilizers and it  is also  contractible. The powerful combinatorial machinery derived from the labeled trees in $\Wh_n$ is the key to prove the main results in \cite{JMcCM}. However, vertices in $\Wh_\G$ cannot be expressed as labeled trees in general, only as families of based partitions, so most arguments need to be adapted.

In this paper we will generalize some combinatorial results from $\Wh_n$ to $\Wh_\G$ and use the equivariant spectral sequence derived from the $\POut(\A)$ action over $\MM_\G$  to compute the cohomology groups $H^q(\POut(A_\G))$ and $H^q(\PAut(A_\G))$. We show (see Sections \ref{sec:MM} and \ref{sec:basis} for definitions):

\begin{teoA}
  Let $K_q$ be the number of rank $q$ essential vertex types in $\Wh_\G$, then
    $$
       H^q(\POut(A_\G))=\mathbb{Z}^{K_q}. 
    $$
    Besides, the cohomology ring $H^*(\POut(A_\G))$ is generated by the one-dimensional classes $\bar{\gamma}_I^j$, where  the $C_{I}^j$ are canonical automorphisms.   \label{teoA}
\end{teoA}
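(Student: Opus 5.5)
We follow the strategy of Jensen--McCammond--Meier, adapted to based partitions in $\Wh_\G$. We use the equivariant cohomology spectral sequence of the action of $\POut(\A)$ on the contractible complex $\MM_\G$:
$$E_1^{p,q}=\prod_{\sigma\in \Sigma_p}H^q(\Stab(\sigma))\Longrightarrow H^{p+q}(\POut(\A)),$$
where $\Sigma_p$ is a set of representatives of $p$-simplices modulo the action. Since the action is cocompact, $\Sigma_p$ is finite. Since stabilizers are free abelian, each $H^q(\Stab(\sigma))$ is the exterior algebra on $H^1(\Stab(\sigma))$, and in particular is free abelian.

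\textbf{Step 1: identify stabilizers.} The stabilizer of a vertex of $\MM_\G$ (a pair given by a marking and a vertex of $\Wh_\G$) is the free abelian group generated by the canonical partial conjugations $C_I^j$ compatible with the based partitions in that vertex. The stabilizer of a simplex, which is a chain in $\Wh_\G$, is the stabilizer of its minimal vertex. So the restriction maps along faces are surjections $H^1(\Stab(\tau))\to H^1(\Stab(\sigma))$ when $\sigma\supseteq\tau$. They send the dual classes of the generators to the corresponding dual classes, or to zero.

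\textbf{Step 2: decompose the $E_1$ page.} Fix a degree $q$. A basis of $H^q(\Stab(\sigma))$ consists of wedges $\bar\gamma_{I_1}^{j_1}\wedge\dots\wedge\bar\gamma_{I_q}^{j_q}$ of dual classes. We split the row $E_1^{*,q}$ as a direct sum of cochain complexes indexed by \emph{vertex types of rank $q$}: each $q$-element set of canonical generators determines the subposet of $\Wh_\G$ (modulo the action) of vertices whose stabilizer contains all of them. The $d_1$ differential preserves this decomposition. The summand for a fixed type is then the simplicial cochain complex of the corresponding subcomplex, which is a (relative) order complex of a subposet of $\Wh_\G$.

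\textbf{Step 3: compute each summand and conclude.} We show that each summand's cohomology is concentrated in $p=0$. It is $\Z$ exactly when the type is essential, meaning the relevant subposet has a unique minimal realizing element, or equivalently is not ``killed'' by a collapsing pair, and it is $0$ otherwise. For free groups this is proved with labeled trees. Here we intend to prove contractibility, or a suitable acyclicity, of these subposets directly from based partitions. We would use a Quillen fiber lemma / closure-operator argument: merging compatible based partitions gives a monotone map onto a cone point whenever the type is inessential.

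Granting Step 3, we have $E_2^{p,q}=0$ for $p>0$ and $E_2^{0,q}\cong\Z^{K_q}$. The spectral sequence then collapses, and $H^q(\POut(\A))\cong\Z^{K_q}$ with no extension issues. Moreover, $E_2^{0,*}$ is a subring of $\prod_\sigma H^*(\Stab(\sigma))$ generated by the global degree-one classes $\bar\gamma_I^j$. The edge map $H^*(\POut)\to E_2^{0,*}$ is an isomorphism of rings, so the ring $H^*(\POut(\A))$ is generated by the $\bar\gamma_I^j$. We must also check that the $\bar\gamma_I^j$ lift to genuine classes in $H^1(\POut(\A))$. This follows from the abelianization: the $C_I^j$ map to a basis of $H_1$ modulo inner automorphisms.

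The main obstacle is Step 3. Without tree combinatorics, we must show that inessential type subposets of $\Wh_\G$ are acyclic. This requires a careful compatibility analysis of based partitions coming from different vertices of $\G$, in particular those sharing links, to construct the required poset retractions.
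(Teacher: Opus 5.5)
\textbf{Overall.} Your framework is the paper's: the equivariant spectral sequence for $\POut(\A)$ acting on $\MM_\G$, canonical bases of the free abelian stabilizers, collapse onto the column $p=0$, and the edge map as a ring isomorphism. However, there are two genuine gaps.

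\textbf{First gap: $d_1$ does not respect your decomposition.} In Step 2 you claim that $d_1$ preserves a direct-sum decomposition of $E_1^{\bullet,q}$ indexed by compatible families $\au$ with $|\au|=q$. This is false.
\begin{itemize}
\item The basis element attached to $\au$ exists only over simplices $\sigma$ with $\au\subseteq\B(\sigma)$, i.e.\ over $\supp(\au)$, which in general is not a subcomplex.
\item Take $\sigma=\tau^0<\tau^1<\cdots<\tau^p$, where $\tau^1$ is obtained from $\tau^0$ by splitting a single worrisome petal $I$ of $\tau^0_i$ into $I_1,I_2$. Restriction from $\Stab(\sigma_0)=\Stab(\tau^1)$ to $\Stab(\sigma)=\Stab(\tau^0)$ sends both $\bar\gamma^i_{I_1}$ and $\bar\gamma^i_{I_2}$ to $\bar\gamma^i_I$.
\item Hence the classes of $\au_1$ and $\au_2$ over the face $\sigma_0$ hit the class of $\au$ over $\sigma$. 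Here $\au_1,\au_2$ are obtained from $\au$ by replacing $\bar C^i_I$ with $\bar C^i_{I_1}$, resp.\ $\bar C^i_{I_2}$. So different families are mixed.
\item If instead your summand is the complex $\C(\au)$ of all simplices whose stabilizer contains $\au$, as your wording suggests, then every summand is a cone over $\tau(\au)$. You would then count all rank $q$ vertex types, not only the essential ones.
\end{itemize}
The paper replaces the decomposition by a filtration indexed by $s(\au)$, the distance from $\tau(\au)$ to its essential cover. The differential is triangular for it because $s(\au_1),s(\au_2)<s(\au)$. The graded pieces are the relative chain complexes of the pairs $(\C(\au),\peri(\au))$. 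The bottom piece is an honest direct sum of the chain complexes of $\supp(\au)=\C(\au)$ over essential $\au$.

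\textbf{Second gap: Step 3 is only granted.} This step is the actual content of the theorem.
\begin{itemize}
\item Your gloss on ``essential'' (a unique minimal realizing element) singles nothing out, since $\tau(\au)$ is always the minimum of $\C(\au)$. The right notion is that in each $\tau(\au)_i$ the only petal admitting a split is the one containing the minimal vertex of $\G-\st(v_i)$. Equivalently, $\peri(\au)=\emptyset$.
\item For inessential $\au$ you must show that $\peri(\au)$ is contractible. The missing input is that worrisome splits never obstruct one another. The petal of $\tau_i$ containing $D^j$ and the petal of $\tau_j$ containing $D^i$ cannot both be worrisome. Moreover, a crossing can only be created by splitting a petal that contains a dominant component.
\item This yields a unique essential cover $\widehat\tau$. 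Then $\peri(\au)$ is covered by the cones $|\Wh_{\geq\mathfrak t}|$, with $\mathfrak t$ obtained from $\tau(\au)$ by worrisome splits only. These cones meet in cones over least upper bounds and all contain $|\Wh_{\geq\widehat\tau}|$, so the nerve is contractible.
\item A closure operator of the kind you sketch does not work directly. Coarsening leaves $\peri(\au)$. The join $\tau'\mapsto\tau'\vee\widehat\tau$ is undefined in general.
\item Counterexample for the join: in $F_6$, let $\tau(\au)_3=\{\{3\},\{1\},\{2,4\},\{5,6\}\}$ with all other partitions trivial. The vertex type $\tau'$ with $\tau'_3=\{\{3\},\{1\},\{2,4\},\{5\},\{6\}\}$ and $\tau'_2=\{\{2\},\{1,3,5,6\},\{4\}\}$ lies in $\peri(\au)$. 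But it crosses $\widehat\tau$ at the component $\{4\}$, so no common upper bound exists.
\end{itemize}

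Finally, the claim that $E_2^{0,*}$ is generated by global degree-one classes is asserted rather than argued.
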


\begin{teoB} \label{teoB}
Let $K_i$ be the number of rank $i$ essential vertex types in $\Wh_\G$ and $N_j$ the number of $j$-cliques in $\G$, then
     $$
H^q(\PAut(A_\G))\cong \bigoplus_{i+j=q} \mathbb{Z}^{K_i\cdot N_j}.
    $$
     Besides, the cohomology ring $H^*(\PAut(A_\G))$ is generated in degree 1.
\end{teoB}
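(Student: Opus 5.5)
We will deduce Theorem B from Theorem A via the extension
$$1\to \Inn(\A)\to \PAut(\A)\to \POut(\A)\to 1,$$
together with the Lyndon--Hochschild--Serre spectral sequence and the Leray--Hirsch theorem, as the abstract suggests. First we identify the kernel. The inner automorphism group $\Inn(\A)$ is $\A/Z(\A)$. When $\G$ has no central vertices (the case we reduce to, or handle by splitting off the centre, which contributes a free abelian factor and cliques in the obvious way), $\Inn(\A)\cong \A$. The cohomology of a RAAG is the exterior face ring of the flag complex of $\G$. Hence $H^j(\A)\cong \Z^{N_j}$ is free abelian with basis indexed by the $j$-cliques, and it is generated as a ring by the degree one classes $v_i^*$ dual to the vertices.

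The LHS spectral sequence has $E_2^{p,q}=H^p(\POut(\A);H^q(\Inn(\A)))$. The first key step is to show that the action of $\POut(\A)$ on $H^*(\Inn(\A))$ is trivial. A pure symmetric automorphism sends each generator $v$ to a conjugate of $v$, so it acts trivially on the abelianization of $\A$ and hence on $H^1(\A)$. Since $H^*(\A)$ is generated by $H^1$, the action is trivial in all degrees. Because every $H^q(\A)$ is free abelian of finite rank, the universal coefficient theorem (or a Künneth-type identification) gives
$$E_2^{p,q}\cong H^p(\POut(\A))\otimes H^q(\A)\cong \Z^{K_p N_q},$$
using Theorem A for $H^p(\POut(\A))$.

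The main step, which we expect to be the real obstacle, is proving collapse at $E_2$ with no extension problems. This is the Leray--Hirsch situation. It suffices to show that the restriction map $H^1(\PAut(\A))\to H^1(\Inn(\A))$ is surjective. Then the classes $v_i^*$ lift to classes $\tilde v_i\in H^1(\PAut(\A))$, and their products over cliques restrict to the basis of $H^*(\Inn(\A))$, so Leray--Hirsch gives $H^*(\PAut(\A))\cong H^*(\POut(\A))\otimes H^*(\A)$ additively.

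To build the lifts, we use the partial conjugations generating $\PAut(\A)$. For each vertex $v$, we would try to construct a homomorphism $\PAut(\A)\to\Z$ that sends the inner automorphism by $v$ to $1$ and kills the inner automorphisms by the other generators. A natural candidate is the following: write a pure symmetric automorphism as a product of partial conjugations $C_{I}^{w}$ and count the total exponent of conjugations by $w=v$ acting on a fixed generator $u\neq v$. Equivalently, take the exponent of $v$ in the conjugator of $u$ modulo the appropriate centralizer. For this we choose $u$ not adjacent to $v$, which exists when $v$ is non-central. We must check that this is well defined, i.e. that it respects the relations among partial conjugations; this is where the RAAG combinatorics enters. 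Alternatively, the same statement can be phrased as vanishing of the transgression $d_2:E_2^{0,1}\to E_2^{2,0}$.

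Once collapse holds, the resulting groups are free abelian of total rank $\sum_{i+j=q}K_iN_j$. The filtration therefore splits, and we obtain the stated isomorphism. Ring generation in degree 1 then follows because $H^*(\PAut(\A))$ is generated by the pullbacks of $H^*(\POut(\A))$, which is generated in degree 1 by Theorem A, together with the degree one lifts $\tilde v_i$, by the Leray--Hirsch module structure.
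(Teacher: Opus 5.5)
Your proposal is correct and is essentially the paper's proof. Both use the Lyndon--Hochschild--Serre spectral sequence of $1\to\A\to\PAut(\A)\to\POut(\A)\to 1$, get collapse by Leray--Hirsch once $H^1(\PAut(\A))\to H^1(\A)$ is surjective, and get generation in degree $1$ from Theorem A together with the Leray--Hirsch module structure. Your lift of $v^*$ is just $\gamma^v_A$ for the component $A$ of $\G-\st(v)$ containing $u$, and the well-definedness you postpone is immediate. One reason is that every Koban--Piggott relator is a commutator, which is the paper's remark that the partial conjugations give a free basis of $H_1(\PAut(\A))$. Alternatively, pure symmetric automorphisms act trivially on $H_1(\A)$, so the $v$-exponent of the conjugator of $u$ is additive under composition.

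The only inaccuracy is your parenthetical about the centre. Deleting dominating vertices leaves $\PAut(\A)$ unchanged, and the kernel of $\PAut(\A)\to\POut(\A)$ is $\Inn(\A)\cong\A/Z(\A)$, the RAAG of the reduced graph, but it changes the $N_j$. For example, for the path on three vertices $\PAut(\A)=\Inn(\A)\cong F_2$ has $H^2=0$, although $\G$ has two edges. So the formula genuinely needs the paper's standing assumption that $\G$ has no dominating vertex.
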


Furthermore, we generalize the relations from the Brownstein-Lee Conjecture to the case of RAAGs, formulate the Generalized Brownstein-Lee Conjecture, and prove that it holds in dimension 2.

\begin{conjA}[Generalized Brownstein-Lee Conjecture]\label{ConjA}
Let $\gamma_A^i$ be a one-dimensional class in
$H^*(\PAut(A_\G))$, where $A$ is a connected component in $\G-\st(v_i)$. For each pair $v_i,v_j$ of two non-adjacent vertices in $\G$, let $D^j, D^i$ be the respective dominant components and $\mathcal{C}$ a shared component. Then $H^*(\PAut(A_\G))$ is generated by the one-dimensional classes $\gamma_A^i$, modulo the following relations for each shared component $\C$ of $v_i,v_j$.
\begin{itemize}
        \item[(1)] $\gamma_A^i\wedge\gamma_A^i=0$
        \item[(2)] $\gamma_A^i\wedge\gamma_B^j=-\gamma_B^j\wedge\gamma_A^i$
        \item[(3)] $\gamma_{D^j}^i\wedge \gamma_{D^i}^j=0$ 
        \item[(4)] $\gamma_{\mathcal{C}}^i\wedge \gamma_{\mathcal{C}}^j=\gamma_{\mathcal{C}}^i\wedge \gamma_{D^i}^j+ \gamma_{D^j}^i\wedge\gamma_{\mathcal{C}}^j$ 
    \end{itemize}
\end{conjA}

We will denote by $R$ the ring given by the presentation in the previous  conjecture.

\begin{teoC}
     There exists a ring epimorphism
    $$
    \phi:H^*(\PAut(A_\G)) \twoheadrightarrow R
    $$
which is an isomorphism in degrees 1 and 2.
    
    \label{teo:H2_1}
\end{teoC}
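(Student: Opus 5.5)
The plan is to realize both $H^*(\PAut(A_\G))$ and $R$ as quotients of one exterior algebra and compare the two ideals. By Theorem B, $H^*(\PAut(A_\G))$ is generated in degree $1$, and $H^1$ is free abelian with basis the classes $\gamma_A^i$. This basis is dual to the partial conjugations $C_A^i$, where $A$ runs over the components of $\G-\st(v_i)$. For a degree-one class $x$, graded commutativity gives $2x^2=0$. Since $H^2$ is torsion free by Theorem B, this forces $x^2=0$. Hence relations (1) and (2) hold in $H^*$, and there is a surjection
$$\pi:\Lambda:=\Lambda_\Z(\gamma_A^i)\twoheadrightarrow H^*(\PAut(A_\G))$$
with kernel some homogeneous ideal $I$. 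On the other side, $R=\Lambda/J$, where $J$ is the ideal generated by the quadratic elements (3) and (4). A ring epimorphism $\phi:H^*\twoheadrightarrow R$ with $\phi(\gamma_A^i)=\gamma_A^i$ exists precisely when $I\subseteq J$. It is automatically an isomorphism in degree $1$, since $I_1=J_1=0$.

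The first step is degree $2$, where I will show $I_2=J_2$.
\begin{itemize}
\item \textbf{$J_2\subseteq I_2$.} I check that (3) and (4) hold in $H^2$ by evaluating on the rank-two free abelian subgroups generated by commuting pairs of partial conjugations, which appear as vertex stabilizers of $\MM_\G$. For each pair of non-adjacent $v_i,v_j$ and each shared component $\C$, the products $\gamma^i_{D^j}\gamma^j_{D^i}$ and $\gamma^i_\C\gamma^j_\C-\gamma^i_\C\gamma^j_{D^i}-\gamma^i_{D^j}\gamma^j_\C$ pair to zero with every such $2$-cycle. This is the RAAG analogue of the free-group computation in \cite{JMcCM}, and I will carry it out using the based-partition description of $\Wh_\G$.
\item \textbf{Rank count.} I show $\Lambda^2/J_2$ is free abelian. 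The relations (4) express each $\gamma^i_\C\gamma^j_\C$ in terms of other monomials, and (3) kills $\gamma^i_{D^j}\gamma^j_{D^i}$, so the surviving monomials form a $\Z$-basis. I then count them and verify the count equals $K_2+K_1N_1+N_2$, which is the rank of $H^2$ given by Theorem B.
\item \textbf{Conclusion in degree 2.} The surjection $\Lambda^2/J_2\to H^2$ is then a surjection between free abelian groups of equal finite rank, hence an isomorphism. So $I_2=J_2$.
\end{itemize}

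The second step, and the main obstacle, is to show $I_q\subseteq J_q$ for $q\ge 3$. Because $J$ is generated in degree $2$, it suffices to prove that $I$ is generated by $I_2$, i.e.\ that $H^*$ is quadratically presented. The route I would try first uses the equivariant spectral sequence for the action on $\MM_\G$, as in the proof of Theorem A, together with the Leray--Hirsch decomposition $H^*(\PAut)\cong H^*(\POut)\otimes H^*(\Z^{?})$-type splitting that underlies Theorem B.
\begin{itemize}
\item The $E_1$-page identifies $H^q$ with a sum, over rank $q$ essential vertex types, of top cohomology of free abelian stabilizers.
\item Consequently a monomial $\gamma^{i_1}_{A_1}\cdots\gamma^{i_q}_{A_q}$ has nonzero image only when every pair of its factors is simultaneously realized in a common stabilizer, which is a pairwise condition.
\item I would then show that every linear dependence among monomials in $H^q$ is a consequence of pairwise relations (3) and (4) applied inside some factor pair. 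This is done by induction on $q$, restricting to the subcomplex associated to a fixed essential vertex type.
\end{itemize}
This gives $I_q=\Lambda^{q-2}I_2\subseteq J_q$, so $\phi$ is a well-defined ring epimorphism, and the degree-$2$ comparison above makes it an isomorphism in degrees $1$ and $2$.
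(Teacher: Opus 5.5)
\textbf{The gap is Step 2, and it cannot be filled with the tools you cite.} Your first bullet in Step 1 gives $J_2\subseteq I_2$. Since $J$ is generated in degree $2$, this already yields $J\subseteq I$. Adding $I\subseteq J$ would give $I=J$, i.e.\ $H^*(\PAut(A_\G))\cong R$ in \emph{all} degrees. That is exactly the Generalized Brownstein--Lee Conjecture, which the paper leaves open. Even its rational version is only known under an extra hypothesis on $\G$, via the Koszulness result of Mart\'{\i}nez-P\'erez and Mendon\c{c}a.

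Your sketch does not supply the missing idea:
\begin{itemize}
\item The spectral sequence gives the ranks of $H^q(\POut(A_\G))$ and generation in degree $1$. It gives no control over relations among products.
\item The Leray--Hirsch decomposition is only a module isomorphism. Its fibre factor is $H^*(A_\G)$, not the cohomology of a free abelian group.
\item The claim that a monomial survives only if its factors lie pairwise in a common stabilizer fails already in degree $2$. No vertex type has a shared component $\C$ as a petal of both $\tau_i$ and $\tau_j$, since that is a crossing. So $C^i_\C$ and $C^j_\C$ never lie in a common stabilizer. Yet $\gamma^i_\C\gamma^j_\C$ evaluates to $\pm1$ on the abelian cycle of the commuting pair $C^i_\C C^i_{D^j}$, $C^j_\C$ (relation iii) of Theorem \ref{teo:conmu}), so it is nonzero.
\end{itemize}

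\textbf{What the paper proves runs the other way, and your Step 1 already covers it.} The arrow in the displayed statement is reversed relative to the body. Corollary \ref{cor:H2_1} gives a ring epimorphism $R\twoheadrightarrow H^*(\PAut(A_\G))$. It exists because $H^*$ is generated in degree $1$ (Corollary \ref{cor:onedimPAut}) and relations (3), (4) hold in $H^*$ (Propositions \ref{prop:rel1} and \ref{prop:rel2}); in your notation, this is exactly $J\subseteq I$. The degree-$2$ isomorphism then comes from a rank count. $R_2$ is free on the monomials $\B_2$, and Proposition \ref{prop:baseIsomor} gives a surjection from the standard basis $\B_1$ of $H^2$ onto $\B_2$. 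Hence $\mathrm{rk}\,R_2\le\mathrm{rk}\,H^2$, which together with $R_2\twoheadrightarrow H^2$ forces an isomorphism. So Step 1 is essentially the whole proof, and Step 2 should be dropped.

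Two points in Step 1 still need repair:
\begin{itemize}
\item The paper checks (3) and (4) by pulling back $x^*\wedge y^*=0$ along homomorphisms $\PAut(A_\G)\to F_2$. Your evaluation on abelian cycles also works, but only after you show these cycles generate $H_2$. This follows from Hopf's formula, since every Koban--Piggott relator is a commutator of two elements that commute in $\PAut(A_\G)$. The cycles that actually test (4) are those of the type iii) relators $[C^u_AC^u_B,C^v_A]$. They are not pairs of partial conjugations, nor vertex stabilizers of $\MM_\G$, which live in $\POut(A_\G)$.
\item The count you only promise is where the real combinatorics lies: Lemmas \ref{lem:minimal-component} and \ref{lem:rank2Partitions} and the case analysis of Proposition \ref{prop:baseIsomor}. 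Only the inequality $|\B_2|\le|\B_1|$ is needed, not an exact count.
\end{itemize}
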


Another important evidence about this conjecture can be derived from the work of Martínez-Pérez and Mendonça in \cite{MarMen}. 
In Theorem A in that paper it is shown that when the graph $\G$ does not contain four pairwise non-adjacent vertices $v_1,v_2,v_3,v_4\in\G$ that lie in four distinct connected components of $\G-\cap_{i=1}^4\lk(v_i)$, then the group $A_\Gamma$ is of Koszul type, implying that $\mathcal{U}_\G$, the universal enveloping algebra of the rational Lie algebra associated to the descending central series of the group is quadratic and has as quadratic dual the cohomology ring of the group. It turns out \cite{MarMen} that for arbitrary $\Gamma$ the quadratic dual of $\mathcal{U}_\G$ is precisely the ring with the presentation of Conjecture A. So this implies that if $\Gamma$ satisfies the condition above, then the rational version of Conjecture A holds true.

The paper is organized as follows: In Section \ref{sec:MM} we explain some basic results about MM$_\G$, obtained from \cite{ArMar}. In Section \ref{sec:3}, we go over the main definitions in \cite{JMcCM}. In Section \ref{sec:basis} we generalize the notion of \textit{one-two automorphism} which plays a key role in \cite{JMcCM} and do some technical work that will be needed later. In Section \ref{sec:PSO} we apply the previous results to the spectral sequence derived from the action of $\POut(A_\G)$ on MM$_\G$ to compute $H^*(\POut(A_\G))$ and prove Theorem A. In Section \ref{sec:PSA}, we use the Lyndon-Hoschild-Serre  spectral sequence associated to $
1\to A_\G\to \PAut(A_\G)\to \POut(A_\G) \to 1
$ and the Leray-Hirsch Theorem to compute the $H^*(\PAut(A_\G))$ groups and prove Theorem B. In Section \ref{sec:ring} we take the first steps into proving the Brownstein-Lee conjecture for $\PAut(A_\G)$ and we prove Theorem C.

{\sl Acknowledgments:} The author would like to thank Ric Wade for helpful conversations during a stay in Oxford that made spectral sequences less ghostly, and Conchita Martínez-Pérez for her invaluable help.

\section{The McCullough-Miller complex}\label{sec:MM}

In this chapter we will explain some basic results about the McCullough-Miller complex for RAAGs. A more detailed explanation can be found at \cite{ArMar}.
\begin{defi}[Graph]
    We will use $\Gamma$ to denote a finite simplicial graph with vertex set $V(\G)$ and set of edges $E(\G)$. Very often we will identify $\Gamma$ or a subgraph of $\Gamma$ with its vertex set. Let $v\in V(\G)$,  $\lk(v)$ denotes the link of $v$, which is the subgraph induced by the vertices adjacent to $v$. We will define the star of $v$, $\st(v)$, as the subgraph induced by $\lk(v)\cup\{v\}$.
\end{defi}

Associated to $\Gamma$ we may define a group as follows.

\begin{defi}[RAAG]
     The RAAG (right-angled Artin group) defined by a simplicial graph $\G$ is the group with presentation
    $$
A_{\Gamma}=\langle v \in V(\Gamma) \mid[v, w]=1 \textrm{ whenever } \{v, w\} \in E(\Gamma)\rangle
$$
\end{defi}
This is, $\A$ is the group generated by the vertices of $\Gamma$, which are called {\sl standard generators} so that two standard generators commute if and only if they are adjacent in $\Gamma$. Two simple examples are the extreme cases: if $\G$ is a graph with no edges, then $A_\G$ is a free group, and if $\G$ a complete graph, then $A_\G$ is a free abelian group.

\subsection{Pure symmetric automorphisms of RAAGs}

\begin{defi}[Pure symmetric (outer)automorphisms] Let $A_\Gamma$ be a RAAG. The group of pure symmetric automorphisms $\PAut(A_\G)$ of $A_\G$ is the group consisting of those automorphisms of $A_\G$ that map each standard generator  (i.e., each vertex of $\G$) to a conjugate of itself. Obviously, it contains the inner automorphism and the corresponding external version is denoted $\POut(A_\G)=\PAut(A_\G)/\Inn(A_\G).$
\end{defi}

Next, we introduce an important family of elements of $\PAut(A_\G)$.

\begin{defi}[Partial conjugations]
Given $v \in L= V(\G)\cup V(\G)^{-1}$ and $A$ a union of connected components of $\Gamma-\st(v)$, the {\sl partial conjugation} of $A$ by $v$ is the automorphism $C_{A}^v$  given by
$$
\left\{\begin{array}{l}
C_{A}^v(w)=vwv^{-1}, \text{ if } w \in A \\
C_{A}^v(z)=z, \text{ if } z \notin A
\end{array}\right.
$$   
where $w,z\in\G$.
\end{defi}

\begin{teo}[Laurence \cite{Lau}] The set of partial conjugations is a generating system for $\PAut(A_\G)$.   \label{teo:partialGen}
\end{teo}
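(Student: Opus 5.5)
Laurence's theorem, that partial conjugations generate $\PAut(A_\G)$, is a cited result rather than something proved here. The natural route to it is a peak-reduction (Whitehead-type) argument on the sum of lengths of the images of the standard generators.

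First, the easy inclusion. Each $C_A^v$ is an automorphism, with inverse $C_A^{v^{-1}}$. To check that it respects the relations, take an edge $\{w,z\}$. If both endpoints lie in $A$ or both lie outside $A$, the commutation relation is clearly preserved. Suppose instead $w\in A$ and $z\notin A$. Then $z$ is not in $\st(v)$: otherwise $z$ would be adjacent to $w$, hence in the same component of $\G-\st(v)$, hence in $A$. So $z\in\st(v)$ after all, and therefore $z$ commutes with $v$. Hence $vwv^{-1}$ commutes with $z$. Since $C_A^v$ sends every standard generator to a conjugate of itself, $C_A^v\in\PAut(A_\G)$.

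For the converse, let $\phi\in\PAut(A_\G)$ with $\phi(v)=g_v v g_v^{-1}$ for each vertex $v$. Choose each $g_v$ of minimal length, i.e.\ reduced so that it does not end in letters of $\st(v)$. Define the complexity $\|\phi\|=\sum_v |g_v|$. The claim is that if $\|\phi\|>0$, some partial conjugation $C$ satisfies $\|C\phi\|<\|\phi\|$. Induction on $\|\phi\|$ then writes $\phi$ as a product of partial conjugations, since $\|\phi\|=0$ means $\phi$ is the identity.

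To find the reducing partial conjugation, pick a vertex $u$ for which $g_u$ is longest. Let $x^{\pm1}$ be the first letter of $g_u$, normalised so that this letter is as "outermost" as possible among the choices permitted by the shuffle normal form of the RAAG. Consider the set $A$ of vertices $w$ such that $g_w$ begins with $x$, up to commutation. The key structural lemma is that $A$ is a union of components of $\G-\st(x)$. Two things must be shown.
\begin{itemize}
\item No $w\in\st(x)$ with $w\neq x$ can be forced into $A$, or its occurrence can be absorbed. This uses that $\phi(x)$ and $\phi(w)$ commute.
\item If $w\in A$ and $z\notin\st(x)$ is adjacent to $w$, then $z\in A$. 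The images $g_w w g_w^{-1}$ and $g_z z g_z^{-1}$ commute, and the centraliser theorem for RAAGs (Servatius), together with the normal form, forces their conjugators to share the leading letter $x$.
\end{itemize}
Then $C_A^{x^{-1}}\phi$ strips $x$ from the front of every $g_w$ with $w\in A$ and changes nothing else, so the complexity drops strictly.

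I expect the main obstacle to be the centraliser and normal-form analysis in the second point. It requires ruling out cancellation between the conjugators and handling a leading letter that commutes with later letters, so that "begins with $x$" is well defined. To manage this, I would work with the piling (heap) normal form and treat the set of possible first letters of $g_w$ carefully. Possibly I would also replace $\|\phi\|$ by a lexicographic complexity to avoid ties between distinct first letters.
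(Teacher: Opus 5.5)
The paper gives no proof of this statement; it is quoted from Laurence, so your sketch has to stand on its own. Your first half is fine apart from an inverted sentence. For an edge $\{w,z\}$ with $w\in A$ and $z\notin A$, one has $z\in\st(v)$ because \emph{otherwise} $z$ would lie in $\G-\st(v)$ next to $w$, hence in the component of $w$, hence in $A$.

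The reduction step, however, fails as stated. Post-composing with $C_A^{x^{-1}}$ does not conjugate the element $\phi(w)$ by $x^{-1}$. Instead it replaces \emph{every} occurrence of a letter $a\in A$ in \emph{every} word $\phi(z)$ by $x^{-1}ax$, including letters buried inside the conjugators $g_z$, also for $z\notin A$. Pre-composing does not help either, since $\phi\circ C_A^{x^{-1}}$ conjugates $\phi(w)$ by $\phi(x)^{-1}=g_xx^{-1}g_x^{-1}$, not by $x^{-1}$. So ``strips $x$ and changes nothing else'' is false, and the complexity can go up even with no ties. In $F_3=\langle a,b,c\rangle$ take $\phi(a)=cbab^{-1}c^{-1}$, $\phi(b)=b$, $\phi(c)=bcb^{-1}$; this is onto, hence an automorphism since $F_3$ is Hopfian, and $\|\phi\|=3$. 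Your recipe gives $u=a$, $x=c$ and $A=\{a\}$, which is a component of $\G-\st(c)$. Yet $C^{c^{-1}}_{\{a\}}\circ\phi$ has complexity $4$, because the conjugator of $a$ becomes $cbc^{-1}$, and $\phi\circ C^{c^{-1}}_{\{a\}}$ has complexity $6$.

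The deeper problem is that your argument never uses that $\phi$ is surjective, only that images of adjacent generators commute. If it worked, it would prove that every endomorphism sending each generator to a conjugate of itself is a product of partial conjugations, hence bijective. That is false. On $F_2=\langle a,b\rangle$ the endomorphism $a\mapsto bab^{-1}$, $b\mapsto aba^{-1}$ is not onto: under $a\mapsto(12)$, $b\mapsto(23)$ into $S_3$ both images go to $(13)$, so $a$ is missed.

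What is missing is a lemma that uses surjectivity to find a reducing move, and the right move compares conjugators with each other rather than with a single first letter. Suppose that for some vertex $y$ and sign $\epsilon$, the set $A$ of vertices $w$ whose reduced conjugator has the form $g_w=g_y\,y^{\epsilon}h_w$ with $|g_w|=|g_y|+1+|h_w|$ is nonempty and a union of components of $\G-\st(y)$. Then $\phi\circ C_A^{y^{-\epsilon}}$ replaces $g_w$ by $g_yh_w$ for $w\in A$ and leaves every other image untouched, so $\|\cdot\|$ drops. In the $F_3$ example above, $y=b$, $\epsilon=1$, $A=\{c\}$ works. Proving that such $y,\epsilon,A$ exist whenever $\phi$ is an automorphism with $\|\phi\|>0$, with the normal-form bookkeeping forced by the partial commutations, is the actual content of Laurence's theorem, and your sketch does not supply it.
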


\begin{nota}
    For the rest of the paper, we will only consider RAAGs $\A$ such that $\G$ has no dominating vertex. Equivalently, RAAGs with trivial center. There is no loss of generality in this assumption, which will be helpful in Section \ref{sec:PSA}, because, by Laurence's Theorem,  $\PAut(\A)\cong\PAut(A_{\G'})$ if $\G$ is obtained from $\G'$ by removing all the dominating vertices. \label{note:centre}
\end{nota}

When we are working with  non adjacent vertices $u$ and $v$, it will be really useful to think about $\G-\st(u)$ in terms of graph components. We will use the next notation due to Day and Wade \cite{DayWade}. 

\begin{defi}[Graph components: shared, dominant and subordinate]
     Let $u, v\in\Gamma$ be not linked.  A connected component of $\Gamma-\st(u)$ that is also a connected component of $\Gamma-\st(v)$ is said to be {\sl shared}. The unique component of $\Gamma-\st(u)$ that contains $v$ is called the {\sl dominant} component of $\G-\st(u)$ respect to $v$, we will denote it by $D^v$ when $u$ is clear. Finally, a {\sl subordinate} component is any connected component of $\Gamma-\st(u)$ that is contained in the dominant component of $\G-\st(v)$ respect to $u$. By Lemma 2.1 in \cite{DayWade}, any connected component of $\Gamma-\st(u)$ is of one of these types. Observe that if $A$ is a connected component of $\Gamma-\st(u)$ which is either shared or subordinate, then $A\cap\st(v)=\emptyset$.
     \label{defi:comp}
\end{defi}

Using the terminology in the previous definition, one can rewrite the presentation of Koban and Piggott (\cite{KoPi}) as follows:
\begin{teo}\label{teo:conmu}
    The group $\PAut(A_{\Gamma})$ is the group generated by all partial conjugations $C_{A}^u$ for $u\in \G$ and $A$ a connected component of $\G-\st(u)$, subject to the following relations:
 \begin{itemize}
     \item[i)] $\left[C_{A}^u, C_{B}^v\right]=1$ if  $u \in \st(v)$,
     \item[ii)]$\left[C_{A}^u, C_{B}^v\right]=1$ if $u \notin \st(v)$ and either $A$ and $B$ are shared but distinct or at least one of $A,B$  is subordinate,
     \item[iii)] $\left[C_{A}^u C_{B}^u, C_{A}^v\right]=1$ if $u \notin \st(v)$, $A$ is shared and $B$ dominant. \label{teo:presentation}
 \end{itemize}
\end{teo}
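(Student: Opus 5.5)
This statement is the Koban--Piggott presentation rewritten in the Day--Wade language of shared, dominant and subordinate components. So the plan is to start from Koban and Piggott's presentation as given in \cite{KoPi} and translate each relation.

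\textbf{Generators.} Koban and Piggott generate $\PAut(A_\G)$ by the partial conjugations $C_A^u$ with $A$ a single connected component of $\G-\st(u)$. By Theorem \ref{teo:partialGen} these suffice, since:
\begin{itemize}
\item a partial conjugation by a union of components is the product of the partial conjugations by the individual components, and these factors commute;
\item partial conjugation by $u^{-1}$ is the inverse of partial conjugation by $u$.
\end{itemize}

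\textbf{Relations in \cite{KoPi}.} These are of three kinds:
\begin{itemize}
\item $C_A^u$ and $C_B^v$ commute when $u$ and $v$ are adjacent or equal;
\item $C_A^u$ and $C_B^v$ commute when $u\notin\st(v)$ and the supports behave suitably: either $A\cap B=\emptyset$ with $v\notin A$ and $u\notin B$, or one of $A\subseteq B$, $B\subseteq A$ holds in a way compatible with the conjugating vertices;
\item a ``Pulling'' relation $[C_A^uC_B^u,C_A^v]=1$ when $v\in B$ and $A$ is a component of both $\G-\st(u)$ and $\G-\st(v)$.
\end{itemize}

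\textbf{Case analysis for non-adjacent $u,v$.} Using Lemma 2.1 of \cite{DayWade}, every component of $\G-\st(u)$ is shared, dominant or subordinate. I then check which configurations give commuting generators, one pair of types at a time.
\begin{itemize}
\item \emph{$A$ and $B$ shared and distinct.} They are disjoint components not containing $u$ or $v$, so the disjointness relation applies. This gives ii).
\item \emph{$A$ subordinate.} Then $A\subseteq D^u$, the dominant component of $\G-\st(v)$, and $A\cap\st(v)=\emptyset$. So $C_A^u$ commutes with every $C_B^v$:
  \begin{itemize}
  \item if $B\neq D^u$, then $B$ is disjoint from $A$, and $u\notin B$;
  \item if $B=D^u$, the nesting case applies.
  \end{itemize}
  This also gives ii).
\item \emph{$A$ shared and $B$ the dominant component $D^v$ of $\G-\st(u)$.} Here $v\in B$, which is exactly the Pulling situation. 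This gives iii).
\item \emph{Remaining pairs.} These are dominant/dominant, or the same shared component for both $u$ and $v$. For these I verify that no commutation relation appears in \cite{KoPi}. The obstruction is $v\in D^v$ and $u\in D^u$, or the identical support conjugated by non-commuting elements.
\end{itemize}

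For adjacent $u,v$, every pair commutes, which is i).

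\textbf{Main obstacle.} The hardest step is making sure the translation is exact: every Koban--Piggott relation must land in i)--iii), and conversely. For the converse direction, I would verify each of i)--iii) directly on the standard generators $w\in V(\G)$, computing images case by case. I would then use the dichotomy from Lemma 2.1 of \cite{DayWade} to rule out any extra relations. In particular, for the ``nesting'' relations of \cite{KoPi}, I need to check that $A\subseteq B$ with $A$ a component for $u$ and $B$ one for $v$ forces $A$ to be subordinate or shared.
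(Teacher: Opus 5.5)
Your route is the paper's route. The paper gives no argument beyond citing Koban--Piggott and asserting that, in the shared/dominant/subordinate language of Day--Wade, their relations become i)--iii). Your dictionary is the right one: disjoint supports $\leftrightarrow$ distinct shared or subordinate, nesting $\leftrightarrow$ subordinate inside dominant, Pulling $\leftrightarrow$ iii). Two points in your case analysis need correcting, however.

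\textbf{The nesting case.} What you must show is that the Koban--Piggott hypothesis forces $A$ to be \emph{subordinate}, not ``subordinate or shared''. If $A$ is shared and $A\subseteq B$, then $A$ is itself a component of $\Gamma-\mathrm{st}(v)$, so $A=B$. But $C^u_A$ and $C^v_A$ do not commute. On $w\in A$ the two composites give $vuwu^{-1}v^{-1}$ and $uvwv^{-1}u^{-1}$, which are distinct in the free group $\langle u,v,w\rangle$. Relation ii) only allows \emph{distinct} shared components, so this configuration cannot land there. The nesting relation therefore has to carry a hypothesis excluding $A=B$, such as $u\in B$ (i.e.\ $\{u\}\cup A\subseteq B$). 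It is that hypothesis which forces $B=D^u$, and hence $A$ subordinate.

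\textbf{The ``remaining pairs'' list.} Your list omits the configuration $A$ shared, $B=D^u$, and its mirror image. These generators do not commute either: on $w\in A$ one gets $uwu^{-1}$ versus $vuv^{-1}wvu^{-1}v^{-1}$. They enter the presentation only through the product relation iii). So you also have to check that no plain commutation relation of Koban--Piggott applies to them. This holds: $u\in D^u$ violates the disjointness condition, and $A\cap D^u=\emptyset$ rules out nesting.
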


\begin{defi}[Valid based partitions, petals and length]
Let $u\in\G$ be a vertex. A {\sl $\G$-valid based partition with operative factor $u$} is a partition of the form $\tau_u=\{\{u\},P_1,\ldots,P_k\}$ of the set $\G-\lk(u)$ so that each $P_i$ is a union of connected components. Each $P_i$ is called a petal of the based partition.   The number of petals, $k$, is the 
    {\sl length} of the based partition, and we denote it $l(\tau_u)$. \label{defi:partitions}
\end{defi}

From now on, every based partition we use will be a $\Gamma$-admissible based partition, for simplicity we will simply say based partition if the graph $\G$ is clear.

\begin{defi}[Crossings, disjoint partition and compatibility]
We say that two based partitions $\tau_u$ and $\tau_v$ {\sl cross} if $u\notin \st(v)$ and there are petals $P$ of $\tau_u$ and $Q$ of $\tau_v$ such that  $D^v\not\subset P$, $D^u\not\subset Q$, and $P\cap Q\neq\emptyset$. It is easy to check that this is equivalent to asking that there is at least one shared component $\C$ such that $\C\subseteq P$ and $\C\subseteq Q$. 
 We will say that two based partitions $\tau_u, \tau_v$  are {\sl disjoint} if $u\notin \st(v)$ and they do not cross 
 and we say that they are {\sl compatible} if they are disjoint or $u\in\st(v)$.
\end{defi}

\begin{defi}[Vertex type]
    We define a {\sl vertex type} $\tau=\{\tau_{v_1},...,\tau_{v_n}\}$ as a collection of pairwise compatible based partitions, such that there is one for each vertex in $\G$.

    We will say that two vertex types $\tau$ and $\tau'$ are {\sl compatible} if their based partitions are pairwise compatible. And we will say that a collection of vertex types is {\sl compatible} if they are pairwise compatible.
\end{defi}

\begin{nota}
   Here we are using a slightly different notation from \cite{ArMar}, where vertex types where denoted $\underline{\underline{A}}$, and a based partition $\underline{A}_u$. The extensive notation in \cite{ArMar} is forced by  the technical work done there.  Here, this simpler version will be enough. 
\end{nota}

\begin{defi}[Order]\label{def:order}
    Let $v\in\Gamma$ and $\tau_v$, $\tau'_v$ two based partitions with operative factor $v$.  We say that $\tau_v\leq \tau_v'$ if every petal of $\tau_v'$ is contained in a petal of $\tau_v$, i.e., when every petal of $\tau_v$ is a union of petals of $\tau_v'$. In particular, if $\tau_v\leq \tau_v'$, then $l(\tau_v)\leq l(\tau'_v)$. 
    For vertex types, we will say that $\tau\leq \tau'$ if $\tau_v\leq \tau'_v,\ \textrm{ for every } v\in V(\Gamma)$.
\end{defi}

We have the next Lemma from \cite[Lemma 4.5.]{ArMar}:

\begin{lem}\label{lem:ordercomp} Let $\tau$ and $\tau'$ be vertex types with $\tau<\tau'$, then $\tau$ and $\tau'$ are compatible.
\end{lem}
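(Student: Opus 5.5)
The idea is to reduce a crossing between a based partition of $\tau$ and one of $\tau'$ to a crossing inside $\tau'$ itself. This is impossible because $\tau'$ is a vertex type, so its based partitions are pairwise compatible. The only tools needed are the definition of the order (Definition \ref{def:order}) and the definition of crossing.

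By definition, $\tau$ and $\tau'$ are compatible when $\tau_u$ and $\tau'_v$ are compatible for every pair of vertices $u,v\in\G$. If $u\in\st(v)$ there is nothing to prove; this includes the case $u=v$. So fix $u\notin\st(v)$. Suppose, for a contradiction, that $\tau_u$ and $\tau'_v$ cross. Then there are a petal $P$ of $\tau_u$ and a petal $Q'$ of $\tau'_v$ with $D^v\not\subset P$, $D^u\not\subset Q'$ and $P\cap Q'\neq\emptyset$. Here $D^v$ is the dominant component of $\G-\st(u)$ with respect to $v$, and $D^u$ is the dominant component of $\G-\st(v)$ with respect to $u$.

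Since $\tau_u\leq\tau'_u$, every petal of $\tau_u$ is a union of petals of $\tau'_u$. Pick a point $x\in P\cap Q'$. Then $x$ lies in some petal $P'$ of $\tau'_u$ with $P'\subseteq P$, so $P'\cap Q'\neq\emptyset$. Moreover $D^v\not\subset P'$, since otherwise $D^v\subseteq P'\subseteq P$. Together with $D^u\not\subset Q'$, this shows that the petals $P'$ of $\tau'_u$ and $Q'$ of $\tau'_v$ satisfy all three conditions in the definition of crossing. Since $u\notin\st(v)$, the based partitions $\tau'_u$ and $\tau'_v$ cross. This contradicts the fact that $\tau'$ is a vertex type, whose based partitions are pairwise compatible. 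Hence $\tau_u$ and $\tau'_v$ are disjoint.

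The only point needing care is choosing which side to refine. One must pass from $\tau_u$ up to $\tau'_u$, shrinking the petal $P$ to $P'\subseteq P$, which preserves the condition $D^v\not\subset P$. Refining on the $v$-side instead would not work: enlarging $Q'$ to the petal $Q$ of $\tau_v$ containing it could destroy the condition $D^u\not\subset Q$. The argument is otherwise symmetric, and the pair $(\tau'_u,\tau_v)$ is handled by exchanging the roles of $u$ and $v$. Note that the hypothesis is only used in the form $\tau\leq\tau'$.
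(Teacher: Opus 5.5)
Your proof is correct. The paper gives no argument of its own for this lemma; it simply imports it from \cite[Lemma 4.5.]{ArMar}. There is therefore no internal proof to compare against, and your proposal works as a self-contained verification using only the definitions of order and crossing stated in this paper.

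The key step is the one you identify: a crossing witness $(P,Q')$ for $\tau_u$ and $\tau'_v$ descends to a crossing witness $(P',Q')$ inside $\tau'$, where $P'$ is the petal of $\tau'_u$ containing a chosen point of $P\cap Q'$. This works because passing to a smaller petal cannot make it contain $D^v$. That is exactly the monotonicity that drives the lemma.

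Your argument in fact gives slightly more than stated. It uses only $\tau\leq\tau'$, not $\tau<\tau'$. More generally, it shows that coarsening one based partition of a vertex type never creates a crossing with the remaining based partitions.

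The reduction of the case $u\in\st(v)$ (including $u=v$) is also fine under this paper's definition of compatibility, since $u\in\st(u)$.
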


\begin{defi}[$\Gamma$-Whitehead poset]
    The {\sl $\Gamma$-Whitehead poset} Wh$_{\G}$ is the poset formed by all the vertex types associated to $\G$ under the partial order previously defined. 
\end{defi}

\begin{defi}[Nuclear vertex]
The {\sl nuclear vertex} type, denoted  $\OO$, is the vertex type such that  for each $v\in \G$ the associated based partition is trivial, i.e., $\OO_v=\{\{v\},P\}$, where $P=\G-\st(v)$. It is the unique minimal element in the $\Wh_{\G}$ poset.
\end{defi}

\begin{defi}[Rank of a vertex type]
   We denote by $l(\tau_v)$ the length of a based partition, i.e., the number of petals it has. Then, the {\sl rank of a vertex type $\tau$} is 
   $$\mathrm{rk}(\tau)=\displaystyle\sum_{v\in\G}(l(\tau_v)-1)$$
\end{defi}

\begin{defi}[Carried automorphism]\label{def:carried}
  The group of automorphisms carried  by a based partition $\tau_u$ is  the subgroup of $\PAut(\A)$ generated by the partial conjugations $C^u_I$ where $I$ runs over the petals of $\tau_u$. The group of automorphisms carried  by a vertex type $\tau$ is the subgroup of $\PAut(\A)$ generated by the automorphisms carried by each based partition $\tau_u\in\tau$.
\end{defi}

The following is an easy  observation that will be useful later.

\begin{lem}\label{lem:ordercarrier} Let $\tau\in\Wh_\G$ be a vertex type. The subgroup of $\PAut(\A)$   carried  by $\tau$ contains the inner automorphisms. Moreover, given two vertex types $\tau$ and $\tau'$ such that $\tau<\tau'$, the subgroup of automorphisms carried by $\tau$ is inside the subgroup of automorphisms carried by $\tau'$.
\end{lem}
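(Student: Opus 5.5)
The plan is to prove the two assertions separately, working directly with the generators of Definition \ref{def:carried}.

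\emph{Inner automorphisms.} Since $\Inn(\A)$ is generated by the conjugations $\iota_u\colon w\mapsto uwu^{-1}$ for $u\in V(\G)$, it suffices to show that each $\iota_u$ is carried by $\tau$. Fix $u$ and its based partition $\tau_u=\{\{u\},P_1,\dots,P_k\}$. I would check that
$$\iota_u = C^u_{P_1}C^u_{P_2}\cdots C^u_{P_k}.$$
The petals are pairwise disjoint, and every element of $\G-\st(u)$ lies in exactly one of them. The factors act on disjoint sets of generators, fixing the rest, so the product sends $w\in\G-\st(u)$ to $uwu^{-1}$. It fixes every $w\in\st(u)$. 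For such $w$ we also have $uwu^{-1}=w$, because $w$ is either $u$ or commutes with $u$. Hence the product equals $\iota_u$ on all standard generators, and therefore on all of $\A$. (If each $C^u_{P_i}$ is read as a product over the connected components contained in $P_i$, the same computation applies verbatim.)

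\emph{Monotonicity.} Suppose $\tau<\tau'$. Then for every vertex $v$ we have $\tau_v\le\tau'_v$, so by Definition \ref{def:order} each petal $P$ of $\tau_v$ is a disjoint union $Q_1\sqcup\dots\sqcup Q_m$ of petals of $\tau'_v$. The partial conjugations $C^v_{Q_j}$ commute, since they conjugate disjoint sets of generators by the same element $v$. Arguing generator by generator as above gives $C^v_P=C^v_{Q_1}\cdots C^v_{Q_m}$. Thus every generator of the subgroup carried by $\tau$ lies in the subgroup carried by $\tau'$, which proves the inclusion.

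I do not expect a real obstacle here. The only point needing care is the bookkeeping that two automorphisms agreeing on the standard generators are equal, together with the observation that petals are unions of connected components of $\G-\st(v)$. That observation is what makes $C^v_P$ a legitimate partial conjugation in the first place.
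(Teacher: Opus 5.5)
Your proof is correct and is essentially the argument the paper relies on. The paper states this lemma as an easy observation without proof, using the identity $C^u_{I_1}\cdots C^u_{I_k}=\iota_u$ noted in Section 3 together with the fact that a coarser petal's partial conjugation is the product of those of its sub-petals.
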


In what follows, we will often consider the subgroups of outer automorphisms carried by either a given based partition or a vertex type, which are just the corresponding quotients over $\Inn(A_\G)$.

\begin{defi}[Marked vertex type and the action of $\PAut(A_\G)$] We say that a generating system $X$ of $A_\G$ is a {\sl basis} if there is some $\alpha\in\PAut(A_\G)$ such that $X=\alpha(V(\G))$.
A {\sl marked based partition} is a pair $(X,\tau_u)$ where $X$ is an ordered basis of $A_\G$ and $\tau_u$ is a based partition and a {\sl marked vertex type} $(X,\tau)$ is the family of marked based partitions $(X,\tau_u)$ where $\tau_u$ runs over the set of based partitions of a vertex type $\tau$. By marking a based partition $\tau_u$ we understand that we are relabeling the elements in the partition using $X$ (via the automorphism $\alpha$). We extend the partial order of Definition \ref{def:order} to marked vertex types as follows: $(X_1,\tau)\leq(X_2,\tau')$ if $X_1=X_2$ and $\tau\leq\tau'$. The marked vertex types form a poset and the group $\PAut(A_\G)$ acts on this poset via $$\alpha(X,\tau)=(\alpha(X),\tau)$$
where $\alpha\in\PAut(A_\G)$.
We say that $\alpha$ is {\sl carried by a marked vertex type} $(X,\tau)$ if $\gamma\alpha\gamma^{-1}$ is carried by $\tau$ where $\gamma(X)=V(\G)$.
\end{defi}

Next, we define an equivalence relationship in the set of marked vertex types as follows.

\begin{defi}
     We say that two marked vertex types $(X_1,\tau)$ and $(X_2,\tau)$ are equivalent and we put  $(X_1,\tau)\equiv (X_2,\tau)$ if
there exists an $\alpha\in\PAut(A_\G)$ carried by $(X_1,\tau)$ such that $\alpha(X_1)=X_2$. We will denote by $[X_1,\tau]$ the equivalence class of $(X_1,\tau)$ under this relation. 
\end{defi}

\begin{defi}[$\G$-McCullough-Miller poset]
    The {\sl $\G$-McCullough-Miller poset} is the poset formed by the set of equivalence classes under the equivalence relationship $\equiv$ defined above with the induced partial order.
    The group $\PAut(A_\G)$ acts on this poset via $$\alpha([X,\tau])=[\alpha(X),\tau]$$
    and this action factors through the epimorphism $\PAut(A_\G)\to\POut(A_\G)$ so we also have an action of $\POut(A_\G)$.
\end{defi}

The $\G$-McCullough-Miller poset is formed by copies of the $\Gamma$-Whitehead poset attached to each other through the equivalence relation $\equiv$. It is easy to verify that  nuclear vertex types only carry inner automorphisms, so there is a bijection between the set of nuclear vertices $[X,\OO]$ in the $\G$-McCullough-Miller poset and the set of conjugacy classes of bases of $A_\G$.

\begin{defi}[$\MM_\Gamma$ complex]
    We define the {\sl McCullough-Miller complex $\MM_\G$ for a RAAG $A_\Gamma$} as the simplicial realization of the $\G$-McCullough-Miller poset. In the case when $\A$ is the free group with rank $n$ we will denote this complex by $\MM_n$. \label{def:MM}
\end{defi}

\begin{nota}
The  definition of the action above implies that the stabilizer $H=\Stab_{\POut(\A)}[V(\G),\tau]$  of the vertex $[V(\G),\tau]$ under the $\POut(\A)$ action is precisely the subgroup generated by those automorphisms carried by the vertex type $\tau$. This implies that the stabilizer of the vertex $[X,\tau]$ is $\alpha H\alpha^{-1}$ where $\alpha$ is such that $X=\alpha(V(\G))$. It is proved in \cite[Theorem A]{ArMar}  that these stabilizers are free abelian, that their rank can be computed in terms of the length of the based partitions of $\tau$ and that they yield in fact all the cell stabilizers of the  McCullough-Miller complex $\MM_\G$. 

     Taking this into account, one sees that the McCullough-Miller complex $\MM_\G$ can be also defined as follows. Recall that for a poset $\mathcal{F}$ of subgroups of a group $G$, one can define a new poset, called the {\sl coset poset}, as
$$C(\mathcal{F})=\{gH\mid H\in\mathcal{F},g\in G\}.$$
Then, the McCullough-Miller poset is the coset poset of the family 
$$\{\Stab_{\POut(A_\G)}[V(\G),\tau]\mid \tau\text{ vertex type}\}$$
of subgroups of $\POut(A_\G)$ and $\MM_\G$ is the associated geometric realization. \label{nota:stab}
\end{nota}

\begin{defi}[Stab$(\tau)$]
    We denote by Stab$(\tau)$  the stabilizer of the vertex type $\tau$ of $\MM_\G$ under the $\POut(A_\G)$ action. By definition, we have that Stab$(\tau)$ is the external version of  the subgroup of $\PAut(\A)$ carried by $\tau$. If $\sigma$ is a simplex in $|\Wh_\G|$ corresponding to a chain $\tau^0<\cdots<\tau^k$ then the stabilizer of $\sigma$ is the stabilizer of $\tau^0$ so we have $\Stab(\sigma)=\Stab(\tau^0)$. 
\end{defi}

As explained in the previous remark, the stabilizer of a vertex type is a free abelian group. Even more, the rank of a vertex type $\tau$ as an element of the poset $\Wh_\G$ coincides with the rank of Stab$(\tau)$ as a free abelian group.

\section{The \textit{one-two basis}}
\label{sec:3}

Our main goal is to compute $H^*(\POut(A_\G))$, and we will do it using an  equivariant spectral sequence for the  $\POut(\A)$ action on $\MM_\G$.  If  a group acts simplicially on  a contractible complex, then the first page of this equivariant spectral sequence can be computed in terms of the cell stabilizers (Chapter VII, Proposition 7.3 in \cite{Brown}). A key step in this process is choosing a \textit{good} basis for these stabilizers. We will do it in a way that generalizes the choice of \cite{JMcCM}. 

As proved in  \cite{ArMar} (Lemma 5.3.) and explained in the Remark after Definition \ref{def:MM}, the stabilizer of a vertex $\tau$ is precisely the group of automorphisms carried by $\tau$. By Definition \ref{def:carried}, the partial conjugations $C_I^u$, where $I$ runs over the petals of the based partitions $\tau_u$ of $\tau$, are a generating set for the stabilizer. Observe  that for a  based partition $\tau_u=\{\{u\},I_1,...,I_k\}$, the composition $C_{I_1}^u\cdots C_{I_k}^u$ is the inner automorphism associated to $u$. Since we are working with  outer-automorphisms, this means that the set $\{C_{I_1}^u,\cdots, C_{I_k}^u\}$ is not independent. So, if we want a minimal generating set, we have to choose which one to leave aside. This choice problem is solved as follows in \cite{JMcCM} for the case of the pure symmetric outer-automorphisms of free group:

\begin{defi}[One-two automorphism]
    Let $\{x_1,...,x_n\}$ be a fixed basis for  the free group of rank $n$ $F_n$. A \textit{one-two automorphism} is a partial conjugation $C_I^{x_j}$ such that:
    \begin{enumerate}
        \item[1)] $x_1\notin I$. 
        \item[2)] if $x_j=x_1$, then $x_2\notin I$. 
    \end{enumerate} \label{defi:one-two_aut}
\end{defi}
In simple words, a one-two automorphism never conjugates $x_1$ and never conjugates $x_2$ by $x_1$. The last important definition that we need from \cite{JMcCM} is the notion of  \textit{essential} vertex type.

\begin{nota}
    In $\Wh_n$, a vertex type $\tau$ can be expressed as a labeled tree, and the order in the Whitehead poset can be defined by folding or unfolding these trees. When a tree is folded (or unfolded) we are just joining (or splitting) petals in the based partitions. The nice property about these labeled trees is that they only allow precisely those splits that will result in another vertex type. To generalize this idea, if $\tau$ is a vertex type in $\Wh_\G$ for an arbitrary $\G$, we say that a petal $I$ in a based partition $\tau_u$ of $\tau$ can be split when $\tau'_u$, the  based partition that results from the split,   is still compatible with every other based partition $\tau_v$ of $\tau$.
\end{nota}
 \newpage
\begin{defi}
    A vertex $\tau$ in $\Wh_n$ is \textit{essential} if for each based partition $\tau_j$ of $\tau$ we have that:
    \begin{itemize}
        \item[1)] If $x_j=x_1$, the only petal that  can be split to yield a new vertex type is the one containing $x_2$.
        \item[2)] If $x_j\neq x_1$, the only petal that can be split to yield a new vertex type  is the one containing $x_1$.
    \end{itemize}
\end{defi}

\section{Choosing the basis for $\POut(\A)$}\label{sec:basis}

From now on we fix an  order in the elements in $V(\G)$, and denote them by $v_1,...,v_n$. Sometimes, for simplicity and in the aim of a clear notation, we will just identify $v_i$ with $i$; in this sense, we will often put $\tau_i$ to denote the based partition $\tau_{v_i}$, or $C^j_I$ for the partial conjugation $C^{v_j}_I$.
Let $\tau$ be a vertex type in MM$_\G$ and $\tau_i$ a based partition of $\tau$. The product of the partial conjugations associated to each petal of $\tau_i$ gives the inner automorphism associated to $v_i$, so it is trivial in the outer group. We need to have a way to choose families of partial conjugations  associated to  $\tau_i$ which are independent.

\begin{defi}[Canonical automorphism]
Let $\tau$ be a vertex type in Wh$_\G$ and $\tau_j$ a based partition of $\tau$. For each petal $I$  in $\tau_j$  is  the  associated partial conjugation is $C_I^j$. Let $k$ be the smallest label in $\G-\st(v_j)$. We say that $C_I^j$  is a {\sl canonical automorphism} if $v_k\notin I$. The image of $C_I^j$ in $\POut(A_\G)$, which we will denote by $\bar{C}_I^j$,  is called a {\sl canonical outer-automorphism}.
\end{defi}

It is easy to check that, in the case of the free group, these are precisely the one-two automorphisms in Definition \ref{defi:one-two_aut}.

\begin{ej}
    Let $\G$ be as in Figure \ref{fig:gamma}, and  $$\tau_2=\{\{2\},\{4\},\{5,6,7,8,9,10,11\}\}$$  the only non-trivial based partition associated to $v_2\in\G$. The smallest label in $\G-\st(v_2)$ is 4, so the only canonical automorphism associated to $v_2$ is the partial conjugation $C_I^2$ where $I=\{5,6,7,8,9,10,11\}$. \label{ej:canonical}
\end{ej}

\begin{figure}
   \begin{tikzpicture}
       \centering

    \node[shape=circle,draw=black, fill=black, label={above: 2}, scale=0.5] (1) at (-6,-1.5) {};
    \node[shape=circle,draw=black, fill=black, label={above: 3}, scale=0.5] (2) at (-5,-1.5) {};
    \node[shape=circle,draw=black, fill=black, label={above: 4}, scale=0.5] (3) at (-4,-1.5) {};
    \node[shape=circle,draw=black, fill=black, label={above: 5}, scale=0.5] (4) at (-3,-1.5) {};
     \node[shape=circle,draw=black, fill=black, label={above: 6}, scale=0.5] (5) at (-2,-1.5) {};
    \node[shape=circle,draw=black, fill=black, label={above left: 8}, scale=0.5] (6) at (-4,-0.5) {};
    \node[shape=circle,draw=black, fill=black, label={above left: 9}, scale=0.5] (7) at (-4,0.5) {};
    \node[shape=circle,draw=black, fill=black, label={ above: 7}, scale=0.5] (8) at (-1,-1.5) {};
   
    \node[shape=circle,draw=black, fill=black, label={below left: 10}, scale=0.5] (9) at (-4,-2.5) {};
    \node[shape=circle,draw=black, fill=black, label={below left: 11}, scale=0.5] (10) at (-4,-3.5) {};
    \node[shape=circle,draw=black, fill=black, label={ above: 1}, scale=0.5] (11) at (-7,-1.5) {};

    \path (2) edge (1);
   \path (2) edge (9);
    \path (2) edge (6);
   
    \path (4) edge (5);
    \path (4) edge (9);
    \path (4) edge (6);
    \path (7) edge (6);
    \path (5) edge (8);
    \path (10) edge (9);
    \path (1) edge (11);
 
   \end{tikzpicture}
   
    \caption{A graph $\G$ for  Example \ref{ej:canonical}} \label{fig:gamma}
\end{figure}
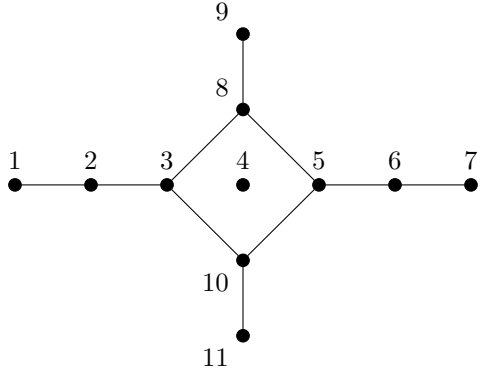

\begin{defi}[Canonical basis]
    For a given vertex type $\tau\in\MM_\G$, we define the {\sl canonical basis} as the collection of all the canonical outer-automorphisms $\bar{C}^j_I\in$ Stab$(\tau)$, i.e., all the $\bar{C}^j_I$ such that
\begin{itemize}
    \item [1)] $I$ is a petal in a based partition $\tau_j$ of $\tau$.
    \item[2)] $\bar{C}^j_I$ is a canonical outer-automorphism.
\end{itemize}
     We denote it by $\B(\tau)$. If $\sigma$ is a simplex in $|\Wh_\G|$ corresponding to a chain $\tau^0<\cdots<\tau^k$ then the stabilizer of $\sigma$ is the stabilizer of $\tau^0$ so we define $\B(\sigma):=\B(\tau^0)$. 
\end{defi}

Following the explanation in the Remark after Definition \ref{def:MM}, we have that

\begin{lem}
    The set $\B(\tau)$ is a minimal generating set for Stab$(\tau)$. \label{lem:baseGen}
\end{lem}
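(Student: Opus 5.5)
The plan is to show that $\B(\tau)$ generates $\mathrm{Stab}(\tau)$ and has exactly $\mathrm{rk}(\tau)$ elements. Since $\mathrm{Stab}(\tau)$ is free abelian of rank $\mathrm{rk}(\tau)$ (Remark after Definition \ref{def:MM}, via \cite[Theorem A]{ArMar}), a generating set of that size is a basis. Hence it is minimal: no proper subset generates, because the rank of the group would then exceed the number of generators.

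For generation, I start from Definition \ref{def:carried}. Together with the Remark, it says $\mathrm{Stab}(\tau)$ is generated by the images $\bar C^j_I$, where $I$ runs over all petals of all $\tau_j$.

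Fix $j$ and let $v_k$ be the smallest vertex of $\G-\st(v_j)$. Since $\tau_j$ partitions $\G-\lk(v_j)$ into $\{v_j\}$ and petals, the petals cover $\G-\st(v_j)$. So there is exactly one petal $I_0$ containing $v_k$.

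For $\tau_j=\{\{v_j\},I_0,I_1,\dots,I_m\}$, the product $C^j_{I_0}C^j_{I_1}\cdots C^j_{I_m}$ is conjugation by $v_j$; these partial conjugations commute by relation i) of Theorem \ref{teo:presentation}. Hence in $\POut(\A)$
\[
\bar C^j_{I_0}=(\bar C^j_{I_1}\cdots\bar C^j_{I_m})^{-1}.
\]
So every non-canonical generator lies in the subgroup generated by the canonical ones, and $\B(\tau)$ generates.

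For the count, the canonical elements coming from $\tau_j$ are the $C^j_{I_t}$ with $t\geq 1$. There are $l(\tau_j)-1$ of them. I also need these to be distinct elements of $\POut(\A)$ for distinct pairs $(j,I)$.

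This is where I expect the main subtlety, since different pairs could a priori give the same outer class. I would handle it by abelianizing, or more concretely by looking at the induced action on $H_1(\A)$ twisted appropriately. A cleaner route is to avoid the issue entirely. Let $S$ be the multiset of canonical elements, of size $\mathrm{rk}(\tau)$, which generates a free abelian group of rank $\mathrm{rk}(\tau)$. A generating multiset of size $r$ for $\Z^r$ must be a basis; in particular it has no repetitions and no trivial elements. This follows because a surjection $\Z^r\to\Z^r$ is an isomorphism, since $\Z^r$ is Hopfian.

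Therefore the canonical outer automorphisms are pairwise distinct and form a basis. The total count is
\[
|\B(\tau)|=\sum_j (l(\tau_j)-1)=\mathrm{rk}(\tau),
\]
and minimality follows as described at the start.
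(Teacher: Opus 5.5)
Your proof is correct and follows essentially the same reasoning the paper relies on, via the discussion in Section~\ref{sec:3} and the Remark after Definition~\ref{def:MM}: $\mathrm{Stab}(\tau)$ is generated by the $\bar C^j_I$ over all petals, the product over the petals of each $\tau_j$ is the inner automorphism by $v_j$ so the petal containing the minimal element can be dropped, and the resulting count $\sum_j(l(\tau_j)-1)=\mathrm{rk}(\tau)$ matches the rank of the free abelian group $\mathrm{Stab}(\tau)$. Your Hopfian argument, which shows that the canonical elements are pairwise distinct and nontrivial, makes explicit a point the paper leaves implicit.
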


\begin{defi}[Support]
    Let $\au$ be a family of canonical outer-automorphisms, then we define its {\sl support} as
    $$
\textrm{supp}(\au):=\{\sigma\in |\Wh_\G|\ |\ \au\subseteq \B(\sigma)\}.
    $$
\end{defi}

\begin{nota}
     $\supp(\au)$ is not necessarily a subcomplex of $|\Wh_\G|$.
\end{nota}

\begin{defi}[Compatibility]
    A collection $\au$ of canonical outer-automorphisms is called {\sl compatible} when $\supp(\au)\neq \emptyset$.
\end{defi}

\begin{ej}
    It is possible for a canonical outer-automorphism $\bar{C}_{I}^j$ to be in Stab$(\tau)$ but not in $\B(\tau)$ for some vertex type $\tau$. For example, let us take $F_5$, the free group  of rank five. In the associated Whitehead poset $\Wh_5$, let $\tau$ be the vertex type where all  based partitions are trivial except for $\tau_3=\{\{3\},\{1,2\},\{4,5\}\}$; and let $\tau'$ be the vertex type where all  based partitions are trivial except for $\tau'_3=\{\{3\},\{1,2\},\{4\},\{5\}\}$, so $\tau<\tau'$. 
    
    We have that $\bar{C}_{\{4,5\}}^3$ is a canonical outer-automorphism since $v_1$ does not get conjugated. We note that $\bar{C}_{\{4,5\}}^3\in\B(\tau)$ since it is canonical and $\{4,5\}$ corresponds to a petal in the based partition $\tau_3$ of $\tau$; in fact, $\B(\tau)=\{\bar{C}_{\{4,5\}}^3\}$. Now, $\tau'$ has been obtained from $\tau$ by splitting the petal $\{4,5\}$, so we have that $\B(\tau')=\{\bar{C}_{\{4\}}^3,\bar{C}_{\{5\}}^3\}$. It is obvious that $\bar{C}_{\{4,5\}}^3\in\Stab(\tau')$, since it is the product of the two elements in the basis. However, $\bar{C}_{\{4,5\}}^3\notin\B(\tau')$, since $I=\{4,5\}$ is not associated to a single petal in the based partition $\tau'_3$ of $\tau'$.

   This also gives us an example of $\supp(\au)$ not being a subcomplex in $|\Wh_\G|$. Let $\au=\{\bar{C}_{\{4,5\}}^3\}$; if we take the simplex $\sigma$ given by the chain $\tau<\tau'$, we have that $\sigma\in\supp(\au)$ since $\B(\sigma)=\B(\tau)$. However, $\tau'\notin \supp(\au)$, so it is not a subcomplex. \label{ej:suppNotSub}
\end{ej}

\begin{defi}[Peripheral$(\au)$, $\mathcal{C}(\au)$]
    Let $\au$ be a family of canonical outer-automorphisms, then we define the following complexes
    $$
\peri(\au)=\{\sigma\in |\Wh_\G|\ |\ \au\subseteq \Stab(\sigma), \au\nsubseteq \B(\sigma)\}
    $$
    and 
    $$
\mathcal{C}(\au)=\{\sigma\in |\Wh_\G|\ |\ \au\subseteq \Stab(\sigma)\}.
    $$
\end{defi}

\begin{ej}
    Following with   Example \ref{ej:suppNotSub}, let $\au=\{\bar{C}_{\{4,5\}}^3\}$. We have that $\tau$ supports $\au$ since $\au\subseteq\B(\tau)$, so $\tau\in\supp(\au)$. On the contrary, $\bar{C}_{\{4,5\}}^3\in$ Stab$(\tau')$, but $\au$ is not supported by $\tau'$ since $\bar{C}_{\{4,5\}}^3\notin\B(\tau')$, so we have that $\tau'\in\peri(\au)$.
\end{ej}

\begin{defi}[Cone point of $\au$]
    For any compatible set $\au$  there is a unique vertex type $\tau$ such that $\B(\tau)=\au$. This vertex type is called the {\sl cone point} of $\au$ and denoted by $\tau(\au)$. \label{def:conePoint}
\end{defi}

Following the definition, we have that if $\bar{C}_{I_1}^i,...,\bar{C}_{I_k}^i$ is the set of canonical outer-automorphisms of $\au$ that conjugate by $v_i$, then the based partition associated to $v_i$ in $\tau(\au)$ will be $\tau(\au)_i=\{\{v_i\},I_1,...,I_k,M\}$; where $M=\G-(\st(v_i)\cup I_1\cup\cdots\cup I_k)$, and $M$ contains the minimal element of $\G-\st(v_i)$.
It is easy to check that for all vertex types $\tau$ the cone point of its canonical basis is $\tau$ itself; i.e., $\tau(\B(\tau))=\tau$. And for all compatible collections $\au$, the canonical basis of its cone point is $\au$ itself; i.e., $\B(\tau(\au))=\au$. As a consequence, the cone point $\tau(\au)$ lies in  $\suppa$.

The next Lemma is a direct consequence of Lemma \ref{lem:ordercarrier}.

\begin{lem}
    If $\au$ is a collection of compatible canonical outer-automorphisms, then  $\au\subseteq$ Stab$(\tau')$ if and only if $\tau(\au)\leq \tau'$.  \label{lem:ordenNuevo}
\end{lem}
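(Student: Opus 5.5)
The statement has two directions, and I would treat them separately. The backward direction should follow from Lemma \ref{lem:ordercarrier}. The forward direction needs a short argument, which I would run one operative factor at a time.

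\emph{($\Leftarrow$).} Suppose $\tau(\au)\leq\tau'$.
\begin{enumerate}
\item Since $\B(\tau(\au))=\au$, every element of $\au$ lies in $\Stab(\tau(\au))$.
\item If $\tau(\au)=\tau'$ there is nothing to prove.
\item Otherwise $\tau(\au)<\tau'$, and Lemma \ref{lem:ordercarrier} gives $\Stab(\tau(\au))\subseteq\Stab(\tau')$. Hence $\au\subseteq\Stab(\tau')$.
\end{enumerate}

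\emph{($\Rightarrow$).} Assume $\au\subseteq\Stab(\tau')$. I would show $\tau(\au)_i\leq\tau'_i$ for every $i$.
\begin{enumerate}
\item Fix $i$. By the description after Definition \ref{def:conePoint},
$$\tau(\au)_i=\{\{v_i\},I_1,\dots,I_k,M\},$$
where the $\bar C^i_{I_r}$ are the elements of $\au$ conjugating by $v_i$, and $M$ is the complement containing the minimal label of $\G-\st(v_i)$.
\item It suffices to show that each $I_r$ is a union of petals of $\tau'_i$. Then $M$, being the complement of $\st(v_i)\cup I_1\cup\cdots\cup I_k$ in $\G-\lk(v_i)$, is automatically a union of petals too, so every petal of $\tau(\au)_i$ is a union of petals of $\tau'_i$.
\item By Lemma \ref{lem:baseGen}, $\Stab(\tau')$ is free abelian with basis $\B(\tau')$. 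Each element of $\B(\tau')$ is $\bar C^j_J$ with $J$ a petal of $\tau'_j$ not containing the minimal vertex of $\G-\st(v_j)$.
\item Write $\bar C^i_{I_r}$ as a product of powers of elements of $\B(\tau')$. Lift to $\PAut(\A)$; we get equality up to an inner automorphism.
\item Evaluate both sides on the standard generators and abelianize the conjugating element of each generator $w$. For $C^i_{I_r}$, the conjugator of $w$ is $v_i$ if $w\in I_r$ and trivial otherwise. For a product of basis elements, the exponent of $v_j$ in the conjugator of $w$ is the sum of the exponents of those $C^j_J$ with $w\in J$, plus the inner correction.
\item Compare the coefficient of $v_i$ only, so only terms with $j=i$ matter. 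The map $I\mapsto(\text{indicator of }I)$, modulo the indicator of all of $\G-\st(v_i)$ (which is what the inner part contributes), separates the canonical petals. Hence $I_r$ is a union of petals of $\tau'_i$ avoiding the minimal vertex.
\end{enumerate}

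The main obstacle is making step 6 rigorous. First, the abelianized-conjugator invariant must be well defined for pure symmetric automorphisms; one could use the known description of $H_1$ of $\PAut(\A)$, or directly the homomorphism to $\Z$ recording the exponent of $v_i$ in the conjugator of $w$ relative to that of a fixed generator outside $\st(v_i)$. Second, the inner ambiguity must be handled. I would normalize by requiring the minimal vertex $v_k$ of $\G-\st(v_i)$ to have trivial $v_i$-exponent. Canonicity of $\bar C^i_{I_r}$ then makes the coefficient $0$ on $M$ and $1$ on $I_r$. Comparing this with the petals of $\tau'_i$ then forces each $I_r$ to be a union of those petals.
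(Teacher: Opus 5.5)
Your proof is correct. The ($\Leftarrow$) half is exactly what the paper has in mind: it just calls the lemma a direct consequence of Lemma~\ref{lem:ordercarrier}. That lemma only yields $\tau(\au)\leq\tau'\Rightarrow\au\subseteq\Stab(\tau')$. The converse is left implicit in the paper, and your conjugator invariant supplies it.

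To make step 6 airtight you need two standard facts, which you correctly flagged but did not verify.
\begin{enumerate}
\item For $w\in\G-\st(v_i)$, the conjugator of $w$ is determined modulo the centralizer $A_{\st(w)}$, which involves no $v_i$. Hence its $v_i$-exponent sum $e_{i,w}(\alpha)$ is well defined. This is why $w$ must be restricted to $\G-\st(v_i)$.
\item $e_{i,w}$ is a homomorphism $\PAut(\A)\to\Z$, because pure symmetric automorphisms act trivially on $H_1(\A)$.
\end{enumerate}
Since $e_{i,w}(\iota_g)$ equals the $v_i$-exponent of $g$ for every such $w$, the difference $f_w=e_{i,w}-e_{i,v_k}$ factors through $\POut(\A)$. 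For canonical $J$ one then has $f_w(\bar C^j_J)=1$ if $j=i$ and $w\in J$, and $f_w(\bar C^j_J)=0$ otherwise.

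Applying $f_w$ to $\bar C^i_{I_r}=\prod\bigl(\bar C^j_J\bigr)^{n_{j,J}}$ shows two things. The indicator of $I_r$ equals $n_{i,J}$ on each canonical petal $J$ of $\tau'_i$, and it vanishes on the petal containing $v_k$. So $I_r$ is precisely the union of the canonical petals of $\tau'_i$ with $n_{i,J}=1$.

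This only uses that $\B(\tau')$ generates $\Stab(\tau')$; you do not need it to be a basis. Your alternative through $H_1(\PAut(\A))$ also works. All relations in Theorem~\ref{teo:conmu} are commutators, so $H_1(\PAut(\A))$ is free abelian on the classes $[C^j_A]$ with $A$ a single component. Comparing the coefficients of the $[C^i_A]$ first kills the inner term, using the component of $v_k$, and then gives the same conclusion.

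The paper's one-line justification buys brevity; your argument buys an actual proof of the nontrivial direction.
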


\begin{lem}
    Both $\peri(\au)$ and $\mathcal{C}(\au)$ are subcomplexes of $\Wh_\G$.
\end{lem}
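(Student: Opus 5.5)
To show that $\mathcal{C}(\au)$ and $\peria$ are subcomplexes of $|\Wh_\G|$, I will check that each set is closed under passing to faces. Throughout I use that a simplex $\sigma$ of $|\Wh_\G|$ is a chain $\tau^0<\cdots<\tau^k$, and that by definition $\Stab(\sigma)=\Stab(\tau^0)$ and $\B(\sigma)=\B(\tau^0)$. A face $\sigma'\subseteq\sigma$ is a subchain $\tau^{i_0}<\cdots<\tau^{i_m}$, whose minimal element $\tau^{i_0}$ satisfies $\tau^0\le\tau^{i_0}$.

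First consider $\mathcal{C}(\au)$. If $\au\subseteq\Stab(\sigma)=\Stab(\tau^0)$, then Lemma \ref{lem:ordercarrier} gives $\Stab(\tau^0)\subseteq\Stab(\tau^{i_0})$. This holds in the outer quotient as well, since passing to the quotient by $\Inn(\A)$ preserves inclusions. Hence $\au\subseteq\Stab(\sigma')$, so every face of a simplex in $\mathcal{C}(\au)$ lies in $\mathcal{C}(\au)$. (If $\au$ is compatible, Lemma \ref{lem:ordenNuevo} gives a cleaner description: $\mathcal{C}(\au)$ is spanned by the vertices $\tau'\ge\tau(\au)$. In general the carrier lemma alone suffices.)

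Now consider $\peria$, which is the delicate part, since Example \ref{ej:suppNotSub} shows that $\suppa$ is not a subcomplex. Let $\sigma\in\peria$. Its faces already lie in $\mathcal{C}(\au)$, so it remains to show $\au\nsubseteq\B(\sigma')$, i.e.\ that some element of $\au$ is not in $\B(\tau^{i_0})$.
\begin{itemize}
\item If $i_0=0$ there is nothing to prove.
\item Otherwise $\tau^0<\tau^{i_0}$. Suppose, for contradiction, that $\au\subseteq\B(\tau^{i_0})$. Then $\au$ is compatible, its cone point satisfies $\tau(\au)\le\tau^0$ by Lemma \ref{lem:ordenNuevo}, and $\au\nsubseteq\B(\tau^0)$.
\end{itemize}
The key claim is that membership of $\bar C^j_I$ in $\B(\tau)$ is monotone downward along chains inside $\mathcal{C}(\au)$. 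Precisely: if $\tau(\au)\le\tau^0\le\tau^{i_0}$ and $I$ is a petal of $\tau^{i_0}_j$, then $I$ is a petal of $\tau^0_j$.

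To prove the claim, use the order. Each petal of $\tau^{i_0}_j$ lies inside a petal of $\tau^0_j$, and each petal of $\tau^0_j$ lies inside a petal of $\tau(\au)_j$. Since $\bar C^j_I\in\au=\B(\tau(\au))$, the set $I$ is itself a petal of $\tau(\au)_j$. So we have the chain
\[ I\subseteq J\subseteq I, \]
where $J$ is the petal of $\tau^0_j$ containing $I$, and the second inclusion holds because $J$ meets $I$ and lies in a single petal of $\tau(\au)_j$, which must be $I$. Hence $J=I$. Moreover, canonicity of $\bar C^j_I$ depends only on $I$ and $j$. It follows that $\au\subseteq\B(\tau^0)$, which is the desired contradiction.

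The main obstacle is this squeeze argument. In particular, one must be careful that $\Stab$ membership of $\bar C^j_I$ forces $I$ to be a union of petals, as opposed to $\bar C^j_I$ being a product involving its complement modulo inner automorphisms. This is exactly where I will rely on Lemma \ref{lem:ordenNuevo} rather than on a direct petal computation.
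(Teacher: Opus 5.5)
Your proof is correct. For $\mathcal{C}(\au)$ it is the paper's argument (Lemma \ref{lem:ordercarrier}); for $\peria$ you take a different route.

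The paper argues directly from the failure at $\tau^0$. It picks $\bar C^j_I\in\au$ with $\bar C^j_I\notin\B(\tau^0)$ and notes that $I$ is then a union of $k>1$ petals of $\tau^0_j$. Passing to $\tau^i>\tau^0$ only splits petals, so $I$ stays a union of at least two petals of $\tau^i_j$, and hence $\bar C^j_I\notin\B(\tau^i)$.

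You instead argue by contradiction through the cone point. Assuming $\au\subseteq\B(\tau^{i_0})$ makes $\au$ compatible, so Lemma \ref{lem:ordenNuevo} gives $\tau(\au)\le\tau^0\le\tau^{i_0}$. A petal common to both ends of this interval is then squeezed into being a petal of $\tau^0_j$.

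The paper's version needs neither compatibility nor $\tau(\au)$, and it follows the specific obstruction. However, it uses without comment that $\bar C^j_I\in\Stab(\tau^0)$ forces $I$ to be a union of petals of $\tau^0_j$. Your version hands exactly this input to the ``only if'' direction of Lemma \ref{lem:ordenNuevo}. Be aware that this direction is not literally a consequence of Lemma \ref{lem:ordercarrier}, as the paper states. It rests on the same union-of-petals fact, which one can verify in $H_1(\POut(\A))$: by the Koban--Piggott presentation, this group is free abelian on the classes of the canonical $\bar C^j_A$ with $A$ a single component.

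So both proofs share one underlying ingredient. What yours adds is a cleaner, reusable statement: among the vertices $\tau'\ge\tau(\au)$, those with $\au\subseteq\B(\tau')$ form a down-closed set. It also handles faces explicitly through their minimal vertex, rather than checking only vertices.
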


\begin{proof}
    Let $\sigma \in\peria$ with $\sigma=\tau^0<\cdots <\tau^k$. We have to check that every vertex type $\tau^i$ is also in $\peria$. By definition of $\peria$, we have that $\au\subseteq\B(\sigma)$ and $\au\nsubseteq\B(\sigma)$, which means that $\au\subseteq\Stab(\tau^0)$ and $\au\nsubseteq\B(\tau^0)$. We have to check that $\au\subseteq\Stab(\tau^i)$ and $\au\nsubseteq\B(\tau^i)$ for every $\tau^i$. Since $\au\subseteq\Stab(\tau^0)$ and $\tau^0<\tau^i$, by Lemma \ref{lem:ordercarrier}, we have that $\au\subseteq\Stab(\tau^i)$. 
    
    Now, since $\au\nsubseteq\B(\tau^0)$, there must be a canonical outer-automorphism $\bar{C}^j_I\in\au$ such that $\bar{C}^j_I=\bar{C}^j_{I_1}\cdots\bar{C}^j_{I_k}$, where $k>1$ and all the $\bar{C}^j_{I_i}\in\B(\tau^0)$, i.e, all the $I_i$ are petals in the based partition $\tau^0_j$ of $\tau^0$. As $\tau^0<\tau^i$, all the petals in the based partition  $\tau^i_j$ are either the same  or a result of splitting some petals in $\tau^0_j$. As a consequence, we have that $\bar{C}^j_{I}\notin\B(\tau^i)$, because in other case $I=I_1\cup\cdots\cup I_k$ would be a petal in the based partition $\tau^i_j$ of $\tau^i$, which is a contradiction.

    Applying Lemma \ref{lem:ordercarrier}, it is obvious that $\ca$ is a subcomplex.
\end{proof}
\begin{lem}
     Let $\au$ be a family of canonical outer-automorphisms, then $\ca=\{\sigma\in |\Wh_\G|\ |\ \tau(\au)\in\sigma\}$ and, as a consequence, $\ca$ is contractible (is a cone over $\tau(\au)$).
\end{lem}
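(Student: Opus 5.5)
The plan is to prove the set-theoretic description of $\ca$ by a double inclusion, using Lemma \ref{lem:ordenNuevo} together with the fact that simplex stabilizers are computed at the minimal vertex of the chain. Implicitly $\au$ is compatible, since otherwise the cone point $\tau(\au)$ of Definition \ref{def:conePoint} is not defined.

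For the inclusion $\ca\subseteq\{\sigma \mid \tau(\au)\in\sigma\}$ I would argue as follows.
\begin{enumerate}
\item Let $\sigma$ be given by the chain $\tau^0<\cdots<\tau^k$, with $\au\subseteq\Stab(\sigma)=\Stab(\tau^0)$.
\item Lemma \ref{lem:ordenNuevo} gives $\tau(\au)\leq\tau^0$, and hence $\tau(\au)\leq\tau^i$ for every $i$.
\item So $\tau(\au)$ is comparable with every vertex of $\sigma$.
\end{enumerate}
The statement must therefore be read as ``$\sigma$ lies in the closed star of $\tau(\au)$'', meaning that $\sigma\cup\{\tau(\au)\}$ is again a chain. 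Literally, $\sigma$ itself need not contain $\tau(\au)$ as a vertex: take $\sigma=\{\tau^0\}$ with $\tau^0>\tau(\au)$. I would make this interpretation explicit, since it is exactly what is needed for the cone conclusion.

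For the converse inclusion I would argue as follows.
\begin{enumerate}
\item Suppose $\sigma\cup\{\tau(\au)\}$ is a chain.
\item If $\tau(\au)$ is the minimum of that chain, then every vertex of $\sigma$ is $\geq\tau(\au)$.
\item By Lemma \ref{lem:ordenNuevo}, or directly from Lemma \ref{lem:ordercarrier}, every vertex of $\sigma$ then has stabilizer containing $\au$. In particular $\au\subseteq\Stab(\tau^0)=\Stab(\sigma)$.
\end{enumerate}

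The main obstacle is showing that no simplex of $\ca$ contains a vertex strictly below $\tau(\au)$. This is already settled by the first inclusion: every vertex of a simplex of $\ca$ lies above $\tau(\au)$. Hence $\ca$ is exactly the simplicial realization of the subposet $\{\tau\in\Wh_\G \mid \tau\geq\tau(\au)\}$. That realization is a cone with apex $\tau(\au)$, because $\tau(\au)$ is the unique minimum of the subposet. A poset with a minimum has contractible realization: one can either use the straight-line homotopy to the apex, which is well defined because adding the minimum to any chain gives a chain, or apply Quillen's standard argument.

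The final step is to check that these chains match the simplices of $\ca$. Any chain in $\{\tau\geq\tau(\au)\}$ has stabilizer containing $\au$, again by Lemma \ref{lem:ordercarrier}. Hence $\ca$ equals the realization of the upper set of $\tau(\au)$, which is contractible.
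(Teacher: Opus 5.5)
Your proof is correct and follows essentially the paper's argument. The paper cites only Lemma \ref{lem:ordenNuevo} and the definition of $\ca$, together with $\Stab(\sigma)=\Stab(\tau^0)$, and you are right that the displayed equality cannot be read literally. One correction to your framing: for $\au\neq\emptyset$ the closed-star reading is also false. The simplex $\OO<\tau(\au)$ lies in the closed star, but its stabilizer $\Stab(\OO)$ does not contain $\au$ by Lemma \ref{lem:ordenNuevo}, since $\tau(\au)\not\leq\OO$. So the restriction in your converse step to chains with minimum $\tau(\au)$ cannot be removed, and the statement should be read exactly as you finally prove it: $\ca=|\Wh_{\geq\tau(\au)}|$, a cone with apex $\tau(\au)$.
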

\begin{proof}
    It follows from Lemma \ref{lem:ordenNuevo} and the definition of $
\mathcal{C}(\au)$.
\end{proof}

Recall that, as explained in the Remark after Definition \ref{defi:one-two_aut}, for a vertex type $\tau$, we cannot split  petals arbitrarily if we want to obtain another vertex type. A petal $I$ in a based partition $\tau_i$ of $\tau$ might not admit any split because of two possible reasons:
\begin{itemize}
    \item[a)] The petal is formed by a single connected component in $\G-\st(v_i)$, so it cannot be divided anymore.
    \item[b)] There is another based partition $\tau_j$  in  $\tau$, such that $v_i,v_j$ have a shared component; and the splitting of the petal in $\tau_i$ containing the dominant component $D^j$ produces a crossing with  $\tau_j$, breaking the compatibility. 
\end{itemize}

We are now ready to generalize the idea of essential vertices:

\begin{defi}[Essential]
    A collection of compatible canonical outer-automorphisms $\au$ is {\sl essential} if $\tau=\tau(\au)$ satisfies that for each based partition $\tau_i$ the only petal we can split to get a new vertex type is the one containing the minimal element in $\G-$st$(v_i)$. In this case we say that $\tau$ is an {\sl essential} vertex type. 
    Collections of compatible canonical outer-automorphisms or vertex types that are not essential are called {\sl inessential}.
\end{defi}

Let us explain the previous definition: the canonical basis is defined using the partial conjugations associated to the petals that do not contain the minimal element. If we split a petal which does not contain the minimal element, then the canonical basis will change. If we have an essential collection $\au$, then for each $\tau'$ such that $\tau(\au)\leq\tau'$ we have that $\tau'$ supports $\au$, since the petals that defined the canonical outer-automorphisms in $\au$ cannot be divided. So, in a simple way, having an essential collection $\mathcal{A}$ ensures that if $\tau$ supports $\mathcal{A}$ and $\tau\leq \tau'$, then $\tau'$ also supports $\mathcal{A}$.

\begin{lem}
    Let $\au$ be a collection of compatible canonical outer-automorphisms. Then $\au$ is essential if and only if the complex $\peria$  is empty and if and only if $\supp(\au)$ is a subcomplex of $|\Wh_\G|$. In this case, moreover, $\C(\au)=\supp(\au)$ so $\supp(\au)$ is contractible. \label{lem:EssentialCharact}
\end{lem}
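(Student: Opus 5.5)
I will prove the chain of equivalences (i) $\au$ essential $\Rightarrow$ (ii) $\peria=\emptyset$ $\Rightarrow$ (iii) $\suppa$ is a subcomplex $\Rightarrow$ (i). The final claim then follows from (ii) and the previous lemma. The basic observation is that $\ca$ is the disjoint union of $\suppa$ and $\peria$. Indeed, $\B(\sigma)\subseteq\Stab(\sigma)$, so $\suppa\subseteq\ca$. The set $\peria$ is, by definition, what is left of $\ca$ once $\suppa$ is removed. Consequently, if $\peria=\emptyset$ then $\suppa=\ca$. This is a subcomplex, and it is contractible because it is the cone over $\tau(\au)$ by the previous lemma.

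For (i)$\Rightarrow$(ii), suppose $\sigma=\tau^0<\cdots<\tau^k$ lies in $\ca$. By Lemma \ref{lem:ordenNuevo}, $\tau(\au)\leq\tau^0$, so each based partition $\tau^0_j$ is obtained from $\tau(\au)_j$ by successively splitting petals. The petals $I$ carrying the elements $\bar C^j_I\in\au$ are exactly the petals of $\tau(\au)_j$ not containing the minimal element of $\G-\st(v_j)$. By essentiality these cannot be split to give a new vertex type.

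The main obstacle is that essentiality is a one-step condition on $\tau(\au)$: it says only that no single split of such a petal yields a vertex type. I must also rule out the petal $I$ becoming divided after first splitting other petals. I would argue through the two obstructions a) and b) listed before the definition. If $I$ is a single component, nothing changes. If $I$ cannot be split because of a crossing with some $\tau(\au)_l$ sharing a component, then $I$ contains the dominant component $D^l$ and a shared component, and the offending petal $Q$ of $\tau(\au)_l$ contains that shared component too. Further refinement of $\tau_l$ cannot remove the obstruction: the petal of $\tau^0_l$ containing $D^j$ together with the relevant shared component still produces a crossing. Some care is needed here, and I would adapt the crossing criterion via shared components $\C\subseteq P\cap Q$.

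Granting this, every such $I$ remains a petal of each $\tau^i_j$, so $\au\subseteq\B(\tau^0)$ and $\sigma\notin\peria$. The implication (ii)$\Rightarrow$(iii) was noted above.

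For (iii)$\Rightarrow$(i), argue by contraposition. If $\au$ is inessential, some petal $I$ of $\tau(\au)_j$ not containing the minimal element splits into $I_1\cup I_2$, giving a vertex type $\tau'>\tau(\au)$. Then $\bar C^j_I\notin\B(\tau')$, so $\tau'\notin\suppa$. However, the edge $\tau(\au)<\tau'$ lies in $\suppa$, because its basis is $\B(\tau(\au))=\au$. This is exactly the configuration of Example \ref{ej:suppNotSub}, so $\suppa$ is not a subcomplex.
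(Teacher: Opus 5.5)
Your overall structure matches the paper's argument, reorganized as the cycle (i)$\Rightarrow$(ii)$\Rightarrow$(iii)$\Rightarrow$(i). The paper proves (i)$\Leftrightarrow$(ii) directly and then derives (i)$\Leftrightarrow$(iii) from upward-closedness of the support, really arguing only one direction. Your decomposition $\ca=\suppa\sqcup\peria$ gives (ii)$\Rightarrow$(iii) and the last sentence of the lemma at once. The edge $\tau(\au)<\tau'$ makes explicit the converse that the paper leaves implicit. Those parts are correct.

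The step you leave as ``granting this'' is a genuine gap in your proof, although the paper itself passes over it with ``since it cannot be divided''. Your sketch does not work as written. The petal of $\tau^0_l$ that witnesses the crossing is the one containing the shared component, and it must \emph{not} contain $D^j$; you wrote ``the petal containing $D^j$''. More importantly, ``$I$ cannot be split'' means that \emph{every} split of $I$ is blocked, possibly by a different $\tau(\au)_l$ for each split. So there is no single ``relevant shared component'' until you fix the split that $\tau^0_j$ actually induces on $I$.

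You can drop the case analysis via a) and b) entirely. Suppose petals $P'$, $Q'$ witness a crossing and you refine either based partition. The finer petals inside $P'$ and $Q'$ still avoid $D^l$ and $D^j$ respectively, and since $P'\cap Q'\neq\emptyset$, some pair of them still meets. So crossings persist under refinement; equivalently, every componentwise coarsening of a vertex type is a vertex type.

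Now suppose $\tau(\au)\le\tau^0$ and $I$ is not a petal of $\tau^0_j$. Choose a petal $I_1\subsetneq I$ of $\tau^0_j$, and let $\tau''$ be $\tau(\au)$ with $I$ replaced by $I_1$ and $I\setminus I_1$. Then $\tau(\au)\le\tau''\le\tau^0$ componentwise, so $\tau''$ is a vertex type obtained from $\tau(\au)$ by splitting $I$ alone. This contradicts essentiality. With this inserted, your proof is complete.
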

\begin{proof}
    Let $\au$ be essential, $\tau=\tau(\au)$ and $\bar{C}_{I}^j\in\au$. By definition of canonical and essential, the petal  $I$ in the based partition $\tau_j$ does not contain the minimal element, so it cannot be divided. Since every vertex $\tau'$ with $\tau<\tau'$ is obtained by splitting petals in $\tau$, we have that in the based partition $\tau'_j$ we will still have the petal  $I$ (since it cannot be divided), so $\bar{C}_{I}^j\in\B(\tau')$ and $\au\subset\B(\tau')$.

    For the converse, if $\au$ is inessential, there must be at least one petal $I$ in a based partition $\tau_j$  which can be split and does not contain the minimal element in $\G-$st$(v_j)$. So, if we divide that petal into $I_1, I_2,$ we will obtain a $\tau'>\tau$ such that $\bar{C}_{I}^j=\bar{C}_{I_1}^j\bar{C}_{I_2}^j\in$ Stab$(\tau')$ but $\bar{C}_{I}^j\notin\B(\tau')$, so $\tau'$ is in the $\peria$ complex, so it cannot be empty.

    For the second equivalence, as explained in the previous paragraph, $\au$ is essential if and only if when $\tau$ supports $\mathcal{A}$ and $\tau\leq \tau'$, then $\tau'$ also supports $\mathcal{A}$. As a consequence, if $\sigma=\tau^0<\cdots<\tau^k\in\supp(\au)$, then for any $\tau^i$ we have that $\tau^i\in\supp(\au)$.
\end{proof}

We now turn to the case when $\mathcal{A}$ is inessential. It makes sense to think that if we take an inessential vertex type and we keep splitting its petals, at some point we should get  an essential vertex type. We aim to do this process in an ordered way.

\begin{defi}[Worrisome petal]
    Let   $\tau$ be an inessential vertex type. We call a petal in a based partition $\tau_i$ of $\tau$  {\sl worrisome} if it can be split and it does not contain the minimal element in $\G-\st(v_i)$. \label{def:worri}
\end{defi}  

\begin{nota}
    Observe that if $\tau$ is inessential, there must be at least one worrisome petal; and if it is essential there is none. Further, we know that if $\tau=\tau(\au)$ is inessential, then $\peria$ is not empty, and now we have another characterization: let $\tau'$ be a  vertex type, then $\tau'\in\peria$ if and only if $\tau'>\tau$ and at least one of the worrisome petals in $\tau$ is split in $\tau'$.
\end{nota}

We will state some technical lemmas about worrisome petals:
\begin{lem}
     Let $\tau$ be an inessential vertex type and $\tau_i=\{\{v_i\}, P_1,...,P_k\}$, $\tau_j=\{\{v_j\},Q_1,...,Q_m\}$ two based partitions of $\tau$ such that $v_i$ and $v_j$ are not adjacent. Let us suppose that $P_1$ contains $D^j$ and $Q_1$ contains $D^i$, the respective dominant components. Then,  $P_1$ and $Q_1$ cannot both be worrisome. \label{lem:DomWorri}
\end{lem}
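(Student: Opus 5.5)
The plan is a direct contradiction argument. I expect to use only that $P_1$ and $Q_1$ do not contain the minimal elements, together with the compatibility of $\tau_i$ and $\tau_j$; splittability should not be needed. Suppose both $P_1$ and $Q_1$ are worrisome. Let $m_i$ be the minimal element of $\G-\st(v_i)$ and $m_j$ the minimal element of $\G-\st(v_j)$. By Definition \ref{def:worri}, $m_i\notin P_1$ and $m_j\notin Q_1$.

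First I locate $m_i$ using the trichotomy of Definition \ref{defi:comp}. The component of $\G-\st(v_i)$ containing $m_i$ is dominant, shared or subordinate. It cannot be the dominant component $D^j$, because $D^j\subseteq P_1$ and $m_i\notin P_1$. If it is subordinate, it is contained in $D^i$, the dominant component of $\G-\st(v_j)$ with respect to $v_i$. Then $m_i\in D^i\subseteq Q_1$. The same kind of reasoning is applied to $m_j$ in the second step, and the subordinate case will ultimately be excluded there.

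Second, I show $m_i=m_j$. By the observation after Definition \ref{defi:comp}, a shared or subordinate component of $\G-\st(v_i)$ is disjoint from $\st(v_j)$. Hence $m_i\in\G-\st(v_j)$, and minimality of $m_j$ gives $m_j\le m_i$. Symmetrically $m_j\in\G-\st(v_i)$, so $m_i\le m_j$. Write $m=m_i=m_j$. Now $m\notin Q_1\supseteq D^i$, so the component of $\G-\st(v_i)$ containing $m$ is not subordinate. It is therefore a shared component $\C$, which is also a component of $\G-\st(v_j)$.

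Finally, $\C$ lies in some petal $P$ of $\tau_i$ with $P\neq P_1$, since $m\notin P_1$. Likewise $\C$ lies in some petal $Q$ of $\tau_j$ with $Q\neq Q_1$. Because the petals of a based partition are disjoint, $D^j\subseteq P_1$ gives $D^j\not\subset P$, and similarly $D^i\not\subset Q$. Also $P\cap Q\supseteq\C\neq\emptyset$. So $\tau_i$ and $\tau_j$ cross, contradicting the pairwise compatibility of the based partitions of $\tau$ (recall $v_i\notin\st(v_j)$).

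The only delicate point is the second step, ruling out a subordinate location for the minimal element. It rests on the fact that shared and subordinate components avoid the other star, which is what forces $m_i=m_j$. I would double-check that observation against Lemma 2.1 of \cite{DayWade} before relying on it.
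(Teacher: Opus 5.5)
Your proof is correct and is essentially the paper's argument run by contradiction. The paper splits on whether $\min_i=\min_j$. When they differ, the smaller one must lie in the other vertex's star, hence in the dominant component, hence in $P_1$ or $Q_1$; your second step is exactly the contrapositive of this. When they coincide, a shared component containing the minimum must sit with the dominant component in one of the two partitions, or else there is a crossing; this is your third step. The observation you wanted to double-check needs no appeal to \cite{DayWade}: a shared component of $\G-\st(v_i)$ is by definition a component of $\G-\st(v_j)$, and a subordinate one lies in $D^i\subseteq\G-\st(v_j)$, so both avoid $\st(v_j)$.
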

\begin{proof}
   We have two different cases:
\begin{itemize}
    \item [(i)] The minimal elements in $\G-\st(v_i)$ and $\G-\st(v_j)$ are the same: In this case the minimal element is either in the dominant component of one of them (and in a subordinate component of the other) or in a shared component. If it is in the dominant of one of them,  say $D^j$, then $P_1$ is not worrisome since it contains the minimal element and we have finished. If it is in a shared component, we have that the shared component must lie in the same petal as the dominant component in at least one of the two based partitions, if not a cross would occur between them, contradicting the fact that the based partitions are compatible; so that petal is not worrisome. 
    \item[(ii)] The minimal element in $\G-\st(v_i)$ is different from the one in $\G-\st(v_j)$: We can assume that the smallest one is the one in $\G-\st(v_i)$. This means that the vertex with the smallest label in $\G-\st(v_i)$ is in $\st(v_j)$, as a consequence, it is in  $D^j$, so $P_1$ is not worrisome. 
\end{itemize}

\end{proof}

\begin{lem}
    Let $\tau$ be an inessential vertex type and $\tau_i=\{\{v_i\},I, P_1,...,P_k\}$, $\tau_j=\{\{v_j\},J,Q_1,...,Q_m\}$ two based partitions of $\tau$, such that both $I,J$ are worrisome petals. Let $\tau'_i=\{\{v_i\},I_1,I_2, P_1,...,P_k\}$ and $\tau'_j=\{\{v_j\},J_1,J_2,Q_1,...,Q_m\}$ be  based partitions  obtained by splitting the worrisome petals in an admissible way. Then $\tau'_i$ and $\tau'_j$ are compatible. \label{lem:worrisomeNotImpede}
\end{lem}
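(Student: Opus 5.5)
If $v_i\in\st(v_j)$ the two based partitions are compatible by definition, so the content of the statement is the case $v_i\notin\st(v_j)$. Here I would argue by contradiction, using the reformulation of crossing given in the definition: $\tau'_i$ and $\tau'_j$ cross if and only if there is a shared component $\C$ of $v_i,v_j$ and petals $P'$ of $\tau'_i$, $Q'$ of $\tau'_j$ with $\C\subseteq P'\cap Q'$, $D^j\not\subset P'$ and $D^i\not\subset Q'$.

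The first step is to compare this with the unsplit situation. Each petal of $\tau'_i$ is contained in a petal of $\tau_i$, and likewise for $\tau'_j$. Take the petal $P$ of $\tau_i$ containing $P'$ and the petal $Q$ of $\tau_j$ containing $Q'$. Then $\C\subseteq P\cap Q$. Since $\tau_i$ and $\tau_j$ are compatible, they do not cross. Hence either $D^j\subseteq P$ or $D^i\subseteq Q$.

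Suppose $D^j\subseteq P$. Because $D^j\not\subset P'$, the petal $P$ must actually have been split, so $P=I$ and $P'$ is one of $I_1,I_2$. The same reasoning applies in the symmetric case. So the only crossings that can arise involve a worrisome petal that contains the relevant dominant component and that has been split so as to separate the dominant component from $\C$.

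The second step uses the hypothesis that each split is admissible. This means $\tau'_i$ is compatible with every based partition of $\tau$ other than $\tau_i$, and in particular with $\tau_j$; symmetrically, $\tau'_j$ is compatible with $\tau_i$. Now split into cases according to which of $P$, $Q$ was changed.

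If only $P$ was split, so that $Q'=Q$, then the crossing of $\tau'_i$ with $\tau'_j$ is a crossing of $\tau'_i$ with $\tau_j$. This contradicts admissibility of the split of $I$. The case where only $Q$ was split is symmetric.

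It remains to treat the case $P=I$ and $Q=J$, both split. From the first step, $I$ or $J$ contains its dominant component; by symmetry, suppose $D^j\subseteq I$. If also $D^i\subseteq J$, then $I$ and $J$ are both worrisome petals containing the respective dominant components. This is impossible by Lemma \ref{lem:DomWorri}.

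So $D^i\not\subseteq J$, and I would then check that the crossing already appears between $\tau'_i$ and $\tau_j$. The petal $J$ of $\tau_j$ contains $\C$, contains $Q'$, and does not contain $D^i$. Together with $P'\supseteq\C$ and $D^j\not\subset P'$, this is a crossing of $\tau'_i$ with $\tau_j$, contradicting admissibility.

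The main obstacle I expect is the bookkeeping in this last case. One has to make sure that passing from $Q'$ back to $Q=J$ preserves the condition $D^i\not\subset Q$. It does, because $Q$ is determined as the unique petal containing $Q'$. One also has to be careful that "$D\not\subset P$" is equivalent to "$D\cap P=\emptyset$", which holds since petals are unions of components. Lemma \ref{lem:DomWorri} is exactly what rules out the genuinely new crossing that could be created by two simultaneous splits.
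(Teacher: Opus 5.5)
Your proof is correct. It differs from the paper's mainly in being self-contained.

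The paper first uses Lemma \ref{lem:DomWorri} to assume that $I$ does not contain $D^j$. It then cites Lemma 5.11 of \cite{ArMar}: if $\tau_i$ is compatible with $\tau'_j$ but $\tau'_i$ is not, then the petal of $\tau_i$ containing the dominant component was split. Since only $I$ was split, this is a contradiction.

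You instead unpack the definition of crossing directly:
\begin{enumerate}
\item You pull a crossing pair $(P',Q')$ back to the coarser petals $(P,Q)$, and compatibility of $\tau_i,\tau_j$ shows that a dominant component must have been separated.
\item The cases where only one side was refined are excluded directly by admissibility.
\item In the case where both were refined, Lemma \ref{lem:DomWorri} lets you transfer the crossing to the pair $(\tau'_i,\tau_j)$, which again contradicts admissibility.
\end{enumerate}
In effect you reprove the special case of the cited lemma that is needed here. Your route costs a case analysis but depends only on the definition of crossing and Lemma \ref{lem:DomWorri}. The paper's route is two lines but leans on the external lemma.

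The case split can be compressed. Apply Lemma \ref{lem:DomWorri} first to get, say, $D^j\not\subseteq I$. Then the petal of $\tau_i$ containing $P'$ is either $I$ or $P'$ itself. In both cases that petal misses $D^j$ and meets $Q'$, so it pairs with $Q'$ to give a crossing of $\tau_i$ with $\tau'_j$. This is exactly the paper's argument with the cited lemma made explicit.
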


\begin{proof}
    The case when $v_i$ and $v_j$ are adjacent is trivial, since based partitions associated to adjacent vertices are always compatible. We study the case when they are not adjacent. Since $\tau'_i$ is obtained by   an admissible split, it means that it is compatible with al the based partitions of $\tau$, in particular, it is compatible with $\tau_j$. By the same reason, $\tau'_j$ is compatible with $\tau_i$. By Lemma \ref{lem:DomWorri}, we have that at least one of the worrisome petals does not contain the dominant component. 
    
    By symmetry we assume that $I$ is the one not containing $D^j$. Let us suppose that $\tau'_i$ is not compatible with $\tau'_j$, i.e., there is crossing between them.  By Lemma 5.11 in \cite{ArMar}, if $\tau_i$ is compatible with $\tau'_j$ but $\tau'_i$ is not, then the petal containing the dominant component in $\tau_i$ is split, but this is a contradiction, since the only petal that we have split is $I$, and $I$ does not contain the dominant component.

\end{proof}

In general, for a vertex type $\tau$, splitting a petal in a based partition $\tau_i$ of $\tau$ might imply that we cannot split some other petal in another based partition $\tau_j$ of $\tau$, because  crossings might appear. The previous lemma gives us a really important result for the rest of the section: it says that, if we have an inessential vertex type, splitting a worrisome petal does not impede another worrisome petal from being split.

\begin{lem}
    Let $\tau$ be an inessential vertex type in $\Wh_\G$. Then, there exists a unique essential vertex type $\widehat{\tau}>\tau$ obtained by splitting only the worrisome petals in $\tau$. \label{lem:essentialMaximum}
\end{lem}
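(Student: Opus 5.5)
The plan is to build $\widehat{\tau}$ by an iterative splitting procedure, and to obtain uniqueness from a maximality property among the vertex types above $\tau$ that only refine worrisome petals.

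\textbf{Existence.} Start from $\tau^{(0)}=\tau$. If $\tau^{(m)}$ is inessential, choose a worrisome petal of $\tau^{(m)}$ and split it admissibly into two pieces; call the result $\tau^{(m+1)}>\tau^{(m)}$. The pieces of a worrisome petal do not contain the minimal element of $\G-\st(v_i)$, and every petal is a union of connected components. So each step strictly increases $\mathrm{rk}$, which is bounded by the total number of components, and the process terminates at an essential vertex type. By construction only petals coming from worrisome petals of $\tau$ have been refined.

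One point must be checked along the way. A petal that was not worrisome in $\tau$ must not become worrisome in some $\tau^{(m)}$, for otherwise the process would split it. Non-worrisome petals are of two kinds. Those containing the minimal element stay non-worrisome automatically. The others are unsplittable in $\tau$, either for reason a), which persists, or for reason b). For reason b), I would argue that the obstructing based partition $\tau_j$ still obstructs. The obstruction comes from a shared component lying in the same petal as $D^j$, and splitting worrisome petals of $\tau_j$ only separates pieces avoiding $D^i$. I would use Lemma 5.11 of \cite{ArMar} here, in the same way as in Lemma \ref{lem:worrisomeNotImpede}.

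\textbf{Uniqueness.} The key tool is Lemma \ref{lem:worrisomeNotImpede}: splitting a worrisome petal never prevents the splitting of another worrisome petal. The main step is to show that the family of vertex types $\tau'\geq\tau$ that refine only worrisome petals of $\tau$ is closed under the common refinement $\tau'\vee\tau''$, taken componentwise on each based partition. To prove this, I would check pairwise compatibility of the based partitions of $\tau'\vee\tau''$. Repeated application of Lemma \ref{lem:worrisomeNotImpede}, organised as an induction on the number of elementary splits, reduces this to compatibility after single splits of worrisome petals.

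Given this closure property, the family has a unique maximal element $\widehat{\tau}$, namely the join of all its members. Any essential $\tau'$ in the family equals $\widehat{\tau}$. Indeed, if $\tau'<\widehat{\tau}$, then some petal of $\tau'$ that is contained in a worrisome petal of $\tau$ is split in $\widehat{\tau}$. Such a petal is a splittable petal of $\tau'$ that avoids the minimal element, so it is worrisome and $\tau'$ is inessential, a contradiction. The same argument shows that $\widehat{\tau}$ itself is essential: any worrisome petal of $\widehat{\tau}$ would lie inside a worrisome petal of $\tau$ (using the persistence check above), and splitting it would give a strictly larger member of the family.

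\textbf{Main obstacle.} I expect the hardest part to be the closure under joins, equivalently the extension of Lemma \ref{lem:worrisomeNotImpede} from single splits to iterated splits, together with the persistence of non-worrisomeness. Both rely on the crossing criterion via shared components and Lemma \ref{lem:DomWorri}. The approach is to observe that an iterated split of a worrisome petal never touches the petal containing the dominant component. That petal is fixed by Lemma \ref{lem:DomWorri} for at least one vertex of each non-adjacent pair, and this is exactly what the \cite{ArMar} criterion requires.
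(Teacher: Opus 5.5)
Your argument is correct. The existence half is the paper's construction: split worrisome petals until none remain. For uniqueness the routes differ. The paper only asserts that, by Lemma \ref{lem:worrisomeNotImpede}, the splitting procedure does not depend on the order of the splits. You instead show that the vertex types $\tau'\geq\tau$ refining only worrisome petals of $\tau$ are closed under componentwise common refinement, so they have a maximum, and every essential member equals that maximum.

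The paper's route is shorter. But Lemma \ref{lem:worrisomeNotImpede} only compares two single splits made on $\tau$ itself, so order-independence of iterated splits (and of different ways of splitting one petal) is really a confluence claim, and your join argument supplies it. Your route also proves explicitly that $\widehat{\tau}\geq\mathfrak{t}$ for every such refinement $\mathfrak{t}$, which is exactly what Lemma \ref{lem:periContr} uses. It also makes explicit the persistence of non-worrisome petals, which the paper uses silently.

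You can avoid the induction in the join step. For non-adjacent $v_i,v_j$, Lemma \ref{lem:DomWorri} lets you assume the petal $P_1\supseteq D^j$ of $\tau_i$ is not worrisome, so $P_1$ is a petal of every member of the family. Compatibility at $v_i,v_j$ says each shared component lies either in the $D^j$-petal of the first based partition or in the $D^i$-petal of the second. Hence every shared component outside $P_1$ lies in the $D^i$-petal of both $\tau'_j$ and $\tau''_j$, and therefore in the $D^i$-petal of their common refinement.

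Two of your steps rest on one observation you should state explicitly: crossings survive refinement, so compatibility passes to coarsenings. This gives the persistence step, and it gives your claim that a petal of $\tau'$ which is split in $\widehat{\tau}$ is already splittable in $\tau'$.
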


\begin{proof}
    We will start by constructing $\widehat{\tau}$. We proceed in the following way: we take each based partition $\tau_i$ of $\tau$, and we split each worrisome petal in $\tau_i$ as much as possible, until there is no worrisome petal left. By Lemma \ref{lem:worrisomeNotImpede}, the previous procedure is independent of the order in which we split the petals, so we will always get to the vertex type $\widehat{\tau}$, which is essential, since it has no worrisome petal. 
\end{proof}

\begin{defi}[Essential cover]
We define the {\sl essential cover} of  an inessential vertex type $\tau$ as the unique essential vertex type $\widehat{\tau}$ obtained by splitting only the worrisome petals in $\tau$.
\end{defi}

\begin{lem}
    Let $\au$ be an inessential collection of compatible canonical outer-automorphisms. Then the complex $\peria$  is contractible. \label{lem:periContr}
\end{lem}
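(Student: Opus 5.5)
Let $\tau=\tau(\au)$ and let $\widehat{\tau}$ be its essential cover from Lemma \ref{lem:essentialMaximum}. By the Remark after Definition \ref{def:worri}, a vertex type $\tau'$ lies in $\peria$ if and only if $\tau'>\tau$ and at least one worrisome petal of $\tau$ is split in $\tau'$. The plan is to show that $\peria$ deformation retracts onto a contractible subcomplex. We use Quillen's poset fiber lemma in the form of the standard monotone-map homotopy: if $f,g$ are order-preserving self-maps of a poset with $f(x)\le g(x)$ (or $\ge$) for all $x$, then $f$ and $g$ induce homotopic maps on the realization.

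\emph{Step 1: the projection.} For $\tau'\in\peria$, define $r(\tau')$ as the vertex type obtained from $\tau'$ by keeping the splittings of worrisome petals of $\tau$ and forgetting all other splittings. Concretely, for each $i$ and each petal $P$ of $\tau_i$, we set $r(\tau')_i$ to refine $P$ as $\tau'_i$ does if $P$ is worrisome, and to keep $P$ whole otherwise. We must check three things:
\begin{itemize}
\item[(a)] $r(\tau')$ is a vertex type, that is, its based partitions are pairwise compatible;
\item[(b)] $\tau<r(\tau')\le\tau'$;
\item[(c)] $r$ is order preserving.
\end{itemize}
Property (b) is clear, and $r(\tau')>\tau$ holds because some worrisome petal is split. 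Property (c) is clear from the definition, and (b) also gives $r(\tau')\in\peria$. Property (a) is where Lemma \ref{lem:worrisomeNotImpede} and the argument of Lemma 5.11 in \cite{ArMar} enter. A crossing between $r(\tau')_i$ and $r(\tau')_j$ would require splitting the petal containing the dominant component on some side. By Lemma \ref{lem:DomWorri}, at most one of the two dominant-containing petals is worrisome. We then argue as in Lemma \ref{lem:worrisomeNotImpede}, using that $r(\tau')_i$ coarsens $\tau'_i$ and that $\tau'$ itself is compatible.

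\emph{Step 2: the retraction.} Since $r(\tau')\le\tau'$ and $r$ is monotone, the map $r$ is homotopic to the identity on $|\peria|$. Hence $\peria$ is homotopy equivalent to the image $Y$ of $r$. The subcomplex $Y$ consists of the vertex types $\tau<\rho$ obtained from $\tau$ by splitting only worrisome petals, with at least one split.

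\emph{Step 3: contractibility of $Y$.} By Lemma \ref{lem:essentialMaximum} and Lemma \ref{lem:worrisomeNotImpede}, every $\rho\in Y$ satisfies $\rho\le\widehat{\tau}$: any admissible split of worrisome petals can be continued to the maximal splitting, and the order-independence gives uniqueness. Also $\widehat{\tau}\in Y$, since $\tau$ is inessential and so $\widehat{\tau}\neq\tau$. Thus $Y$ has a maximum element, so $|Y|$ is a cone with apex $\widehat{\tau}$ and is contractible. Hence $\peria$ is contractible.

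\emph{Main obstacle.} The delicate step is (a) in Step 1. It needs a careful crossing analysis: coarsening the non-worrisome splits of $\tau'$ must not create a crossing. We would first try the symmetric argument used for Lemma \ref{lem:worrisomeNotImpede}. Note that $r(\tau')_i$ lies between $\tau_i$ and $\tau'_i$, so any shared component $\C$ witnessing a crossing in $r(\tau')$ lies in petals of $\tau'$ that avoid the dominant components. These petals are coarsened versions of those in $r(\tau')$, and this would contradict the compatibility of $\tau'$.

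If that fails, a fallback is a different retraction: send $\tau'$ to the vertex type obtained from $\widehat{\tau}$ and $\tau'$ by taking, on worrisome petals, the common refinement splittings (the meet with $\widehat{\tau}$ above $\tau$). We would then use the chain $\tau'\ge r(\tau')\le\widehat{\tau}$ directly as a zigzag of monotone maps to the constant map $\widehat{\tau}$.
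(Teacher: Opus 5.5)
Your proof is correct, but it takes a different route from the paper's.

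\textbf{The paper's route.} It covers $\peria$ by the cones $|\Wh_{\geq\mathfrak{t}}|$, where $\mathfrak{t}$ runs over the worrisome-only refinements of $\tau$. Lemma \ref{lem:worrisomeNotImpede} together with the least-upper-bound lemma of \cite{ArMar} shows that intersections are again such cones. The nerve (Quillen--McCord) theorem then reduces to the nerve, which is a simplex because $\widehat{\tau}\geq\mathfrak{t}$ for every $\mathfrak{t}$.

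\textbf{Your route.} You replace all of this with one monotone retraction $r\leq\mathrm{id}$ onto the poset $Y$ of worrisome-only refinements, and then observe that $Y$ has maximum $\widehat{\tau}$. In fact $r\leq c_{\widehat{\tau}}$, the constant map, so the zigzag $\mathrm{id}\geq r\leq c_{\widehat{\tau}}$ gives contractibility directly.

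\textbf{What each buys.} Your argument avoids the nerve theorem and the existence of joins. It also makes explicit a point the paper passes over: that the cones actually cover $\peria$, i.e.\ that every $\tau'\in\peria$ lies above some worrisome-only refinement. That is exactly your claim that $r(\tau')$ is a vertex type. Both proofs rest on the same key fact, that every worrisome-only refinement is $\leq\widehat{\tau}$, and the paper asserts it just as briefly as your Step 3 does.

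\textbf{Step (a) is easier than you fear.} Coarsening never creates a crossing. Suppose a shared $\C$ lies in petals $P$ of $r(\tau')_i$ and $Q$ of $r(\tau')_j$ with $D^j\not\subset P$ and $D^i\not\subset Q$. Then the petals of $\tau'_i$ and $\tau'_j$ containing $\C$ sit inside $P$ and $Q$, so they witness a crossing in $\tau'$, which is impossible. Hence Lemmas \ref{lem:DomWorri} and \ref{lem:worrisomeNotImpede} are not needed there, and the fallback is unnecessary. Your ``main obstacle'' paragraph already contains this argument, though its last sentence reverses the direction of refinement: the petals of $\tau'$ refine those of $r(\tau')$, not the other way round.

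\textbf{Why the poset tools apply.} $\peria$ is the order complex of an upward-closed subposet of $\Wh_\G$, since membership of a chain depends only on its minimum. You used this implicitly; it is worth stating.
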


\begin{proof}

   Let $\tau=\tau(\au)$, and let $\mathfrak{t}>\tau$ be a vertex type obtained from $\tau$ by splitting at least one worrisome petal, and only worrisome petals.  Let $\Wh_{\geq\mathfrak{t}}$ be the subposet of $\Wh_\G$ consisting of vertex types greater than or equal to $\mathfrak{t}$. We have that the associated  subcomplexes $|\Wh_{\geq\mathfrak{t}}|$ cover $\peria$, since in every vertex type in $\peria$  at least one of the worrisome petals in $\tau$ has been split (recall the remark after Definition \ref{def:worri}).   
   We have that each $|\Wh_{\geq\mathfrak{t}}|$ is contractible, since it is a cone over $\mathfrak{t}$. Even more, if we have two different   $\mathfrak{t}_1$, $\mathfrak{t}_2$, by Lemma \ref{lem:worrisomeNotImpede}, they must be compatible;  so, by Lemma  5.6 in \cite{ArMar}, they have a least upper bound $\mathfrak{t}_3$ and it is clear that $|\Wh_{\geq\mathfrak{t}_1}|\cap |\Wh_{\geq\mathfrak{t}_2}|= |\Wh_{\geq\mathfrak{t}_3}|$. This means that we have covered $\peria$ by contractible complexes, whose intersection is also contractible. As a consequence we can apply the Quillen-McCord theorem (or Quillen fiber lemma) and deduce that $\peria$ is homotopic to the nerve of the covering.

   Now, let $\widehat{\tau}$ be the essential cover of $\tau$, we have that for every $\mathfrak{t}$, $\widehat{\tau}\geq\mathfrak{t}$, so $|\Wh_{\geq\widehat{\tau}}|\subseteq |\Wh_{\geq\mathfrak{t}}| $; as a consequence $|\Wh_{\geq\widehat{\tau}}|$ is a cone point in the nerve, hence the nerve is contractible.

\end{proof}
Since $\peria\subseteq\mathcal{C}(\au)$ and both are contractible we have:
\begin{lem}
    The relative homology groups $H_i(\mathcal{C}(\au),\peria)$ vanish for all $i\geq0$.
    \label{lem:HomoRel}
\end{lem}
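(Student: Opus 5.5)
The statement concerns an inessential collection $\au$, so both relevant complexes are available from the preceding results. By the Lemma preceding Lemma \ref{lem:EssentialCharact}, $\ca=\{\sigma\in|\Wh_\G|\mid \tau(\au)\in\sigma\}$ is a cone over the cone point $\tau(\au)$, hence contractible. By Lemma \ref{lem:periContr}, $\peria$ is contractible. We also know that $\peria\subseteq\ca$ is a subcomplex. The plan is therefore to apply the long exact sequence of the pair $(\ca,\peria)$ in simplicial homology:
$$\cdots\to H_i(\peria)\to H_i(\ca)\to H_i(\ca,\peria)\to H_{i-1}(\peria)\to H_{i-1}(\ca)\to\cdots$$

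First I would handle positive degrees. For $i\geq 2$, both absolute groups flanking $H_i(\ca,\peria)$, namely $H_i(\ca)$ and $H_{i-1}(\peria)$, vanish by contractibility, so exactness forces $H_i(\ca,\peria)=0$. For $i=1$ the segment is
$$H_1(\ca)\to H_1(\ca,\peria)\to H_0(\peria)\to H_0(\ca)\to H_0(\ca,\peria)\to 0.$$
Since both complexes are nonempty and connected, the map $H_0(\peria)\cong\Z\to H_0(\ca)\cong\Z$ is an isomorphism: it sends the class of a point to the class of the same point. Injectivity of this map, together with $H_1(\ca)=0$, gives $H_1(\ca,\peria)=0$. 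Surjectivity gives $H_0(\ca,\peria)=0$.

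The only point needing care is nonemptiness of $\peria$, since relative $H_0$ of a pair with empty subcomplex would be $\Z$. This is exactly where inessentiality enters: by Lemma \ref{lem:EssentialCharact}, $\au$ inessential implies $\peria\neq\emptyset$. Concretely, splitting a worrisome petal of $\tau(\au)$ yields a vertex of $\peria$. Equivalently, I could phrase the argument with reduced homology: the inclusion of two nonempty contractible complexes induces isomorphisms on all reduced homology groups, so the relative groups vanish. I expect no genuine obstacle beyond citing nonemptiness correctly.
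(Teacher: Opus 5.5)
Your proposal is correct and follows the paper's approach. The paper derives the lemma in one line from the facts that $\peria\subseteq\ca$ and that both are contractible: $\ca$ is a cone over $\tau(\au)$, and $\peria$ is contractible by Lemma \ref{lem:periContr}. Your long exact sequence computation, including the remark that inessentiality of $\au$ is what makes $\peria$ nonempty, simply spells out that deduction.
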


\section{Computing $H^*(\POut(A_\G))$}\label{sec:PSO}

We want to compute the cohomology groups $H^q(\POut(A_\G))$ using the equivariant spectral sequence associated to the action of $\POut(A_\G)$ on $ \MM_\G$. Recall that if $G$ is  a group acting simplicially on a contractible complex $X$, the first page of this equivariant spectral
sequence (see, for example, Proposition 7.3 in  Chapter VII of \cite{Brown}) is

\begin{equation}
    E_1^{p,q}=\prod_{\substack{\sigma \in\left|W_\G\right|, \\ \operatorname{dim}(\sigma)=p}} H^q(\operatorname{Stab}(\sigma))
    \label{eq:spectralSeq}
\end{equation}

The differentials
on the $E_1$ page are a combination of restriction  and coboundary maps. 
We note that $\B(\tau)$,  the canonical basis  for the stabilizers of vertices, allows us to give a concrete description of the entries on the first page of this spectral sequence.

We know that the stabilizer of a cell $\sigma=\tau^0<\cdots<\tau^k$ is the stabilizer of $\tau^0$, so we can consider what happens in the case of a vertex  $\tau$. The stabilizers are free abelian, so  $H_1(\operatorname{Stab}(\tau), \mathbb{Z})$ is free abelian with generating set $\left\{\left[\bar{C}_{I}^j\right] \mid \bar{C}_{I}^j \in \mathcal{B}(\tau)\right\}$. As a consequence, $H^1(\operatorname{Stab}(\tau))$ is generated by the dual basis $\left\{\left[\bar{C}_{I}^j \right]^* \mid \bar{C}_{I}^j \in \mathcal{B}(\tau)\right\}$, we will denote  $\bar{\gamma}_I^j:=\left[\bar{C}_{I}^j\right]^*$.

For free abelian groups,  $H^*(\mathbb{Z}^n)$ is an exterior algebra generated by one-dimensional classes, so the set $\left\{\bar{\gamma}_I^j \mid \bar{C}_{I}^j \in \mathcal{B}(\tau)\right\}$ is a generating set for the cohomology of $\Stab(\sigma)$. As a consequence, the set of all products of $q$ distinct $\bar{\gamma}_I^j$ classes is a generating set for $H^q(\operatorname{Stab}(\sigma))$. Therefore, we may choose as generating set for $H^q(\operatorname{Stab}(\sigma))$  the collection of dual classes of all subsets of canonical outer-automorphisms $\mathcal{A} \subset \mathcal{B}(\tau)$ with $|\mathcal{A}|=q$.
Taking all of this into account, we see that in  $E_1^{p,q}$ we have a basis element for each compatible collection $\au$ of canonical outer-automorphisms with $|\au|=q$ and each $\sigma$ of dimension $p$ where $\mathcal{A}\subseteq\B(\sigma)$.

\subsection{The essential filtration}

We aim to prove that the spectral sequence collapses at $E_2$. To do that we will construct a filtration through essential and inessential vertex types.
We begin with a few useful remarks. Let   $G\curvearrowright X$ be a group action, with $X$ a contractible simplicial complex and  $X/G=W$ a finite fundamental domain. Let $F_{\bullet}(X)$ be the chain complex of $X$ and for any cell $\sigma\in X$ consider the stabilizer $\Stab(\sigma)$ and the induced module $\mathbb{Z}\uparrow^G_{\Stab(\sigma)} $. Then 
$$F_p(X)=\bigoplus_{\substack{\sigma\in W \\ |\sigma|=p}}\mathbb{Z}\uparrow^G_{\Stab(\sigma)}.$$
Now, for the homology equivariant spectral sequence we have:

$$
E^1_{p,q}=H_q(G,F_p(X))=H_q(G,\bigoplus_{\substack{\sigma\in W \\ |\sigma|=p}}\mathbb{Z}\uparrow^G_{\Stab(\sigma)})= \bigoplus_{\substack{\sigma\in W \\ |\sigma|=p}}H_q(G,\mathbb{Z}\uparrow^G_{\Stab(\sigma)})=\bigoplus_{\substack{\sigma\in W \\ |\sigma|=p}}H_q(\Stab(\sigma),\mathbb{Z}).
$$
In a similar way, taking into account that $W$ is finite

$$
E^{p,q}_1=H^q(G,\textrm{Hom}_\mathbb{Z}(F_p(X),\mathbb{Z}))=H^q(G,\bigoplus_{\substack{\sigma\in W \\ |\sigma|=p}}\textrm{ Coind }^G_{\Stab(\sigma)}\ \mathbb{Z})= \bigoplus_{\substack{\sigma\in W \\ |\sigma|=p}}H^q(G,\mathbb{Z}\uparrow^G_{\Stab(\sigma)})=\bigoplus_{\substack{\sigma\in W \\ |\sigma|=p}}H^q(\Stab(\sigma)).
$$
Since in our case all stabilizers are free abelian, we have that
$$
H^q(\Stab(\sigma))=\textrm{Hom}_\mathbb{Z}\left(H_q(\Stab(\sigma)),\mathbb{Z}\right).
$$
As a consequence, $E_1^{\bullet,q}$ can be obtained by dualizing $E^1_{\bullet,q}$, so if $E^1_{\bullet,q}$ has homology zero always except in degree zero where it is free abelian of rank $k$, the same will happen for $E_1^{\bullet,q}$. Therefore, for simplicity, we will work with the homology  spectral sequence. We want to construct a filtration to compute the homology of $E^1_{\bullet,q}$. Because of Lemma \ref{lem:essentialMaximum}, we know that for each inessential vertex $\tau$ we can find its essential cover $\widehat{\tau}$, an essential vertex  in $\peria$. 
\begin{defi}[$s(\au)$]
    Let $\au$ be a family of compatible canonical outer-automorphisms. If $\au$ is essential we put $s(\au)=0$, in other case let $\tau=\tau(\au)$ and $\widehat{\tau}$  its essential cover, then $s(\au)$ is the  distance between $\tau$ and $\widehat{\tau}$, i.e. the length of any unrefinable chain $\tau<\tau^1<\cdots<\tau^{s(\au)}=\widehat{\tau}$ . \label{def:lengthIness}
\end{defi}

We claim that there is a finite filtration
$$
F^0_{\bullet}\subset F^1_{\bullet} \subset \cdots \subset  E^1_{\bullet,q}
$$
  where $F_p^s=\bigoplus\mathbb{Z}_{\au}$, and the sum ranges over the families $\au\subset \B(\sigma)$ with $\sigma$ a $p$-cell such that $s(\au)\leq s$ and $|\au|=q$. Moreover
$$
F_\bullet^s/F^{s-1}_\bullet=\bigoplus_\au \textrm{ Chain complex of the pair }(\mathcal{C}(\au),\peri(\au))
$$
ranging over the set of  compatible families $\au$, with $|\au|=q$ and $s(\au)=s$; and 
$$
F^0_{\bullet}=\bigoplus_\au \textrm{ Chain complex of } \supp(\au)
$$
where now the sum is over the set of  compatible families $\au$ such that $|\au|=q$ and $\au$ essential.
To see it we have to check that the differential map $d^1$ of $E^1_{p,q}$ maps $F_\bullet^s$ to $F_\bullet^{s}$. We do this in our next lemma.

\begin{lem}
There is a finite filtration
$$
F^0_{\bullet}\subset F^1_{\bullet} \subset \cdots \subset  E^1_{\bullet,q}
$$
   with 
   $$F_p^s=\bigoplus\{\mathbb{Z}_{\au}\ |\ \au\subseteq\B(\sigma), \sigma\in|\Wh_\G| \textrm{ a }p\textrm{-cell}, |\au|=q, s(\au)\leq s\}$$ 
   such that $F_\bullet^s/F^{s-1}_\bullet$ is the sum of the chain complexes of pairs $(\mathcal{C}(\au),\peri(\au))$ and $
F^0_{\bullet}$ is the sum of chain complexes of $\suppa$ for $\au$ essential.

\end{lem}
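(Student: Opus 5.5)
The plan is to describe $E^1_{\bullet,q}$ explicitly as a chain complex with a basis, and then check the filtration condition cell by cell. By the discussion before the statement, $E^1_{p,q}$ is free abelian with one generator $\mathbb{Z}_{\au}\otimes\sigma$ for each $p$-cell $\sigma=\tau^0<\cdots<\tau^p$ of $|\Wh_\G|$ and each $q$-subset $\au\subseteq\B(\sigma)=\B(\tau^0)$.

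First we make $d^1$ explicit. The differential is the alternating sum over faces $\partial_i\sigma$ of the maps $H_q(\Stab(\sigma))\to H_q(\Stab(\partial_i\sigma))$ induced by inclusion.
\begin{itemize}
\item For $i\geq 1$ the face has the same minimal vertex $\tau^0$. So the stabilizer and the basis are unchanged, and the generator $\au\otimes\sigma$ goes to $\au\otimes\partial_i\sigma$.
\item For $i=0$ the new minimal vertex is $\tau^1>\tau^0$, and $\Stab(\tau^0)\subseteq\Stab(\tau^1)$ by Lemma~\ref{lem:ordercarrier}. Each $\bar C^j_I\in\au$ either lies in $\B(\tau^1)$, or equals a product $\bar C^j_{I_1}\cdots\bar C^j_{I_r}$ of elements of $\B(\tau^1)$, where the $I_t$ are the pieces into which the petal $I$ is split. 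On $H_q=\Lambda^q$ the image of $[\au]$ is then the wedge of these sums. Expanding, it is a signed sum of classes $[\au']$ with $\au'\subseteq\B(\tau^1)$.
\end{itemize}

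Second, and this is the main step, we show that every $\au'$ appearing with nonzero coefficient satisfies $s(\au')\leq s(\au)$. Put $\tau=\tau(\au)$ and $\tau'=\tau(\au')$. Each element of $\au'$ is obtained from an element of $\au$ by replacing a petal $I$ of $\tau_j$ with a sub-petal $I_t$ coming from a split that is realised in $\tau^1$. Such a split is admissible, and $I$ is worrisome in $\tau$ unless $\au'=\au$. The plan is to show that every chain from $\tau'$ to its essential cover can be matched by a chain from $\tau$: the worrisome petals of $\tau'$ are the further splittable pieces of worrisome petals of $\tau$. By Lemma~\ref{lem:worrisomeNotImpede} and the uniqueness in Lemma~\ref{lem:essentialMaximum} one should get $\widehat{\tau'}=\widehat{\tau}$ whenever the split pieces are worrisome, with $\tau\leq\tau'\leq\widehat\tau$. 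Then $s(\au')\leq s(\au)$, and the inequality is strict exactly when $\au'\neq\au$. I expect this comparison of essential covers to be the main obstacle. The delicate case is a piece $I_t$ that is no longer worrisome because it contains a dominant component; there Lemma~\ref{lem:DomWorri} and Lemma~5.11 of \cite{ArMar} must be used to see that the remaining worrisome petals are unaffected.

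Third, we identify the graded pieces. Modulo $F^{s-1}$, the only surviving terms of $d^1(\au\otimes\sigma)$ with $s(\au)=s$ are the face terms whose face still contains $\au$ in its stabilizer. These terms occur with the same coefficient $\pm1$, because a term with $\au'=\au$ arises only when no element of $\au$ was split. So $F^s/F^{s-1}$ decomposes as a direct sum over $\au$ with $s(\au)=s$ of the complexes spanned by cells $\sigma$ with $\au\subseteq\B(\sigma)$, carrying the simplicial boundary. Cells in $\ca$ not supporting $\au$ are exactly the cells of $\peria$, so these complexes are the simplicial chains of $\ca$ modulo those of $\peria$. That is, they are the chains of the pair $(\ca,\peria)$; the lemma proved above that $\ca$ and $\peria$ are subcomplexes is what makes this legitimate.

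For $s=0$, Lemma~\ref{lem:EssentialCharact} gives $\peria=\emptyset$ and $\supp(\au)=\ca$, which yields the description of $F^0$. Finiteness of the filtration is immediate, since $\Wh_\G$ is finite and $s(\au)$ is bounded by the maximal rank.
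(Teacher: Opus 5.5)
\textbf{The gap is in your second step, and it cannot be closed as planned.}

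The relation $\tau(\mathcal{A})\le\tau(\mathcal{A}')\le\widehat{\tau(\mathcal{A})}$ is false. In the cone point $\tau(\mathcal{A}')$, the discarded pieces $I\setminus I_t$ are absorbed into the petal $M$ of $\tau_j$ that contains the minimal element. The union $M\cup(I\setminus I_t)$ lies in no petal of $\tau(\mathcal{A})_j$, so in general $\widehat{\tau(\mathcal{A}')}\neq\widehat{\tau(\mathcal{A})}$ as well.

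The real danger is not a piece containing a dominant component. It is this enlargement of $M$, which can unlock petals at \emph{other} vertices. Suppose $v_k\notin\mathrm{st}(v_j)$ and $D^k\subseteq M$. Take a petal of $\tau_k$ containing $D^j$ and a shared component $\mathcal{C}\subseteq I\setminus I_t$. In $\tau(\mathcal{A})$ it cannot separate $\mathcal{C}$ from $D^j$, because that would cross the petal $I\not\supseteq D^k$. In $\tau(\mathcal{A}')$, however, $\mathcal{C}\subseteq M\cup(I\setminus I_t)\supseteq D^k$, so the split may become legal.

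\textbf{Counterexample (free group).} Let $\Gamma$ be edgeless on $v_1,\dots,v_5$. Take $\tau_2=\{\{v_2\},\{v_4,v_5\},\{v_1,v_3\}\}$ and $\tau_3=\{\{v_3\},\{v_2,v_4,v_5\},\{v_1\}\}$, all other based partitions trivial, and $\mathcal{A}=\mathcal{B}(\tau)$.
\begin{itemize}
\item The only worrisome petal is $\{v_4,v_5\}$: any split of $\{v_2,v_4,v_5\}$ creates a petal avoiding $v_2$ that meets $\{v_4,v_5\}$. Hence $s(\mathcal{A})=1$.
\item On the edge $\tau<\widehat{\tau}$, the $i=0$ term of $d^1$ contains $\mathcal{A}'=\{\bar C^2_{\{4\}},\bar C^3_{\{2,4,5\}}\}$.
\item Here $\tau(\mathcal{A}')_2=\{\{v_2\},\{v_4\},\{v_1,v_3,v_5\}\}$, so $\{v_2,v_4,v_5\}$ now splits as $\{v_2,v_4\}\mid\{v_5\}$, and no further. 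Hence $s(\mathcal{A}')=1=s(\mathcal{A})$.
\end{itemize}
So modulo $F^0$ the differential of $\mathcal{A}\otimes(\tau<\widehat{\tau})$ keeps the cross term $\pm\mathcal{A}'\otimes\widehat{\tau}$. Therefore $F^1/F^0$ is not the direct sum of the pair complexes.

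\textbf{With adjacent vertices, $s$ can even increase.} Take edges $ub,ub',bb',at,tx$, isolated vertices $y,c$, with $u$ the smallest label and $y$ the second. Let $\tau_a=\{\{a\},\{x,c\},\{u,b,b',y\}\}$, $\tau_b=\{\{b\},\{a,t,x,c\},\{y\}\}$, and $\tau_{b'}=\{\{b'\},\{a,t,x,c\},\{y\}\}$. Then $s(\mathcal{A})=1$, but $s(\mathcal{A}')=2$ for the $\mathcal{A}'$ containing $\bar C^a_{\{x\}}$. So $F^1$ is not even a subcomplex.

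\textbf{The paper's own argument has the same gap, so you need a different filtration index.} The paper's Case 2 rests on the assertion that ``the only possible difference between worrisome petals is the one between $I$ and $I_1$.'' That overlooks exactly this effect on the other based partitions, so following it will not rescue the step.

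One index that works is $w(\mathcal{A})=\sum_{\bar C^j_I\in\mathcal{A}}\#\{\text{connected components of } I\}$. Every $\mathcal{A}'\neq\mathcal{A}$ produced by the $i=0$ term replaces some petal by a proper sub-petal, so $w(\mathcal{A}')<w(\mathcal{A})$.

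Filtering by $w$, your first and third steps go through verbatim and give $F^w/F^{w-1}=\bigoplus_{w(\mathcal{A})=w}C_\bullet(\mathcal{C}(\mathcal{A}),\mathrm{Peripheral}(\mathcal{A}))$. Lemmas \ref{lem:HomoRel} and \ref{lem:EssentialCharact} then give homology $\mathbb{Z}$ in degree $0$ for each essential $\mathcal{A}$ and $0$ otherwise, which is what Lemma \ref{lem:homoE1} needs.

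The price is that essential families now sit at all filtration levels rather than forming $F^0$. The statement itself has to be reformulated accordingly.
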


\begin{proof}
The differential
$$
d^1:E^1_{p,q}\rightarrow E^1_{p-1,q}
$$
comes from 
$$d^1_\sigma :H_q(\Stab(\sigma),\mathbb{Z})\rightarrow \bigoplus H_q(\Stab(\sigma_i),\mathbb{Z})$$
where $\sigma=\tau^0<\cdots<\tau^p$ and $\sigma_i$ is the result of removing $\tau^i$ from $\sigma$. So we have two possible cases:

\begin{itemize}
    \item If $i\neq 0$: $\Stab(\sigma_i)=\Stab(\sigma)=\Stab(\tau^0)$.
    \item If $i= 0$: $\Stab(\sigma_0)=\Stab(\tau^1)$.
\end{itemize}

As a consequence, for $\mu_\au\in F_p^s$ corresponding to the $p$-cell $\sigma$ we have:
$$
d^1_\sigma(\mu_\au)=\sum_{i=0}^p(-1)^i cores_i(\mu_\au)
$$
where $cores$ denote the corestriction map $
H_q(\Stab(\sigma))\xrightarrow{cores}  H_q(\Stab(\sigma_i))
$. So   $cores_i(\mu_\au)\in F^s_p$ for $i\in\{1,...,p\}$, since they correspond to the same copy of $\au$, i.e., they are in $\C(\au)$. In the case when $i=0$ we distinguish two cases:

   \noindent{\textbf{ Case 1:}} If we have obtained $\tau^1$ from $\tau^0$ without splitting any worrisome petal, then $\au\subset\B(\tau^1)$, so $\tau^1\in\C(\au)$ and the image is $F^s_p$.
   
    \noindent{\textbf{ Case 2:}} If we have obtained $\tau^1$ from $\tau^0$ by splitting a worrisome petal $I$ into $I_1$ and $I_2$: 

    Let $S=\Stab(\tau), S_1=\Stab(\tau^1)$, so $S<S_1$. Let $\au\subseteq\B(\tau)$ such that $\bar{C}_I^i\in\au$, and $\au_1$, $\au_2$ be the sets formed by all the elements in $\au$ except for $\bar{C}_I^i$ which is replaced by $\bar{C}_{I_1}^i$ and $\bar{C}_{I_2}^i$ respectively. So $|\au_1|=|\au_2|=q$ and
$$
cores(\mu_\au)\in\mathbb{Z}\textrm{-span}\left\{\mu_{\au_j}\ | \ j\in\{1,2\}\right\}.
$$
    We take $\tau(\au)$ and $\tau(\au_1)$. All the based partitions are equal except for $\tau(\au)_i$ and $\tau(\au_1)_i$, which differ in two petals:
    $$
    \tau(\au)_i=\{\{i\},Q_1,...,Q_k,I,M\}
    $$
    and
    $$
    \tau(\au_1)_i=\{\{i\},Q_1,...,Q_k,I_1,M'\}
    $$
    where $M'=M\cup I_2$ and both $M$ and $M'$ contain the minimal element in $\G-\st(v_i)$ (recall Definition \ref{def:conePoint}). Since $M$ and $M'$ contain the minimal element, they cannot be worrisome; so the only possible difference between worrisome petals is the one between $I$ and $I_1$. We know that $I$ can be split at least once more than $I_1$, so the distance from $\tau(\au)$ to $\widehat{\tau}$ is bigger than the one from $\tau(\au_1)$ to $\widehat{\tau}_1$; therefore, $s(\au_1)<s(\au)$ and the image is in $F^s_p$. The case of $\tau(\au_2)$ is symmetric.
\end{proof}

As a consequence, all the previous lemmas together with Lemma \ref{lem:HomoRel} imply that 
$$
H_i(F_\bullet^l/F^{l-1}_\bullet)=0
$$
for all $i$, and we have, using also Lemma \ref{lem:EssentialCharact}:
\begin{lem}
    $$H_i(E^1_{\bullet,q})=H_i(F^0_\bullet)=\begin{cases}
        \mathbb{Z}^m, \ \ \ i=0 \\
        0, \ \ \ \ \ \ i>0
    \end{cases}
$$
where $m$ is the number of essential compatible families $\au$ with $|\au|=q$. \label{lem:homoE1}
\end{lem}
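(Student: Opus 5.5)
The plan is to compute the homology of the chain complex $E^1_{\bullet,q}$ through the essential filtration $F^0_\bullet\subset F^1_\bullet\subset\cdots$ built in the previous lemma. Since every collection $\au$ with $|\au|=q$ and $\au\subseteq\B(\sigma)$ has finite $s(\au)$ (the poset $\Wh_\G$ is finite), the filtration is finite and exhausts $E^1_{\bullet,q}$: every generator $\mu_\au$ attached to a $p$-cell $\sigma$ lies in $F^{s(\au)}_p$. I will argue inductively on $s$ that each inclusion $F^{s-1}_\bullet\hookrightarrow F^s_\bullet$ is a quasi-isomorphism. Together with a computation of $H_*(F^0_\bullet)$, this gives the statement.

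For the inductive step, use the short exact sequence of chain complexes
$$0\to F^{s-1}_\bullet\to F^s_\bullet\to F^s_\bullet/F^{s-1}_\bullet\to 0.$$
By the previous lemma, the quotient is the direct sum over compatible $\au$ with $|\au|=q$ and $s(\au)=s$ of the relative chain complexes of the pairs $(\C(\au),\peria)$. Two points need checking here.

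\emph{Identification of the quotient.} A generator $\mu_\au$ on a cell $\sigma$ survives in the quotient exactly when $\au\subseteq\B(\sigma)$. Otherwise $\sigma\in\peria$ and the generator is identified with lower-filtration terms, namely via Case 2 of the previous proof, where the corestriction lands in $\mathbb{Z}\mu_{\au_1}\oplus\mathbb{Z}\mu_{\au_2}$ with $s(\au_j)<s$. So $\sigma\in\C(\au)\setminus\peria$ is precisely $\sigma\in\suppa$.

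\emph{Acyclicity.} $\C(\au)$ is a cone over $\tau(\au)$, and for $s\ge1$ the complex $\peria$ is contractible by Lemma~\ref{lem:periContr}. The long exact sequence of the pair then gives Lemma~\ref{lem:HomoRel}, so each summand is acyclic.

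The long exact homology sequence of the short exact sequence now shows $H_i(F^{s-1}_\bullet)\cong H_i(F^s_\bullet)$ for all $i$. Iterating yields $H_i(E^1_{\bullet,q})=H_i(F^0_\bullet)$.

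It remains to compute $H_*(F^0_\bullet)$. By the previous lemma, $F^0_\bullet$ is the direct sum over essential compatible $\au$ with $|\au|=q$ of the chain complexes of $\suppa$. One must check that the differential does not mix different $\au$ inside $F^0$; this holds because for essential $\au$, Case 2 never occurs, as there are no worrisome petals to split. By Lemma~\ref{lem:EssentialCharact}, each $\suppa$ is a subcomplex equal to $\C(\au)$ and is contractible. Each summand therefore contributes $\mathbb{Z}$ in degree $0$ and nothing else, giving $\mathbb{Z}^m$ in degree $0$ and $0$ in positive degrees, where $m$ counts essential compatible families of size $q$.

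The main obstacle is making the identification of $F^s/F^{s-1}$ with the relative complexes precise, in particular verifying that the face maps with $i\ge1$ and the Case 1 corestrictions respect the indexing by $\au$. For this I would rely on the fact that in these cases the stabilizer is unchanged or the canonical basis still contains $\au$, so the corestriction sends $\mu_\au$ to $\mu_\au$. I would also track signs only up to the standard simplicial boundary.
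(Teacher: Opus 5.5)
Your proposal is correct and follows the paper's argument. The quotients $F^s_\bullet/F^{s-1}_\bullet$ are sums of relative chain complexes of the pairs $(\C(\au),\peria)$, which are acyclic by Lemma~\ref{lem:HomoRel}; this gives $H_*(E^1_{\bullet,q})=H_*(F^0_\bullet)$. In turn, $F^0_\bullet$ is the direct sum of the chain complexes of the contractible complexes $\suppa$ for essential $\au$, by Lemma~\ref{lem:EssentialCharact}. The only difference is that you spell out the induction through the long exact sequences, which the paper leaves implicit.
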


\subsection{The $E_2$ page}

By Lemma \ref{lem:homoE1}, we get  
\begin{prop}
    The non-zero entries on the $E_2$ page of the spectral sequence 
 are concentrated in the $(0,q)$-column, and the entries are free abelian groups of rank the number of essential compatible  families $\au$ with $|\au|=q$. \label{prop:column}
 \end{prop}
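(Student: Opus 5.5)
The proposition follows from Lemma \ref{lem:homoE1} once we pass from the homology spectral sequence back to the cohomology one. The plan has three steps.

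\textbf{Step 1: the $E^2$ page in homology.} For each fixed $q$, Lemma \ref{lem:homoE1} computes the homology of the row $E^1_{\bullet,q}$ under $d^1$. It gives $E^2_{p,q}=H_p(E^1_{\bullet,q})=0$ for $p>0$. It also gives $E^2_{0,q}\cong\mathbb{Z}^m$, where $m$ is the number of essential compatible families $\au$ with $|\au|=q$.

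Behind this lemma sits the filtration $F^0_\bullet\subset F^1_\bullet\subset\cdots$.
\begin{itemize}
\item Its successive quotients are direct sums of relative chain complexes of the pairs $(\C(\au),\peria)$, which are acyclic by Lemma \ref{lem:HomoRel}.
\item Since the filtration is finite, the long exact sequences of the pairs $F^{s-1}_\bullet\subset F^s_\bullet$, together with induction on $s$, give $H_*(E^1_{\bullet,q})\cong H_*(F^0_\bullet)$.
\item The complex $F^0_\bullet$ is a direct sum over essential $\au$ of the chain complexes of $\supp(\au)$. By Lemma \ref{lem:EssentialCharact} each $\supp(\au)$ equals $\C(\au)$ and is contractible, so each summand contributes exactly one copy of $\mathbb{Z}$ in degree $0$.
\end{itemize}

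\textbf{Step 2: transfer to cohomology.} The cochain complex $E_1^{\bullet,q}$ is the $\mathbb{Z}$-dual of $E^1_{\bullet,q}$. This holds because $W=|\Wh_\G|$ is finite, the stabilizers are free abelian, and $H^q(\Stab(\sigma))=\mathrm{Hom}(H_q(\Stab(\sigma)),\mathbb{Z})$, with $d_1$ the transpose of $d^1$. Each $E^1_{p,q}$ is finitely generated free abelian, so the universal coefficient theorem applies to the cochain complex $\mathrm{Hom}(E^1_{\bullet,q},\mathbb{Z})$. We obtain
$$H^p(E_1^{\bullet,q})\cong \mathrm{Hom}(H_p(E^1_{\bullet,q}),\mathbb{Z})\oplus \mathrm{Ext}(H_{p-1}(E^1_{\bullet,q}),\mathbb{Z}).$$
By Step 1, all homology vanishes except $H_0\cong\mathbb{Z}^m$, which is free. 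So every Ext term is zero, and every Hom term is zero except $\mathrm{Hom}(\mathbb{Z}^m,\mathbb{Z})\cong\mathbb{Z}^m$ at $p=0$. Hence $E_2^{p,q}=0$ for $p>0$ and $E_2^{0,q}\cong\mathbb{Z}^m$, which is the claim.

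\textbf{Main obstacle.} The step needing care is the identification in Step 2: we must check that the cohomological differentials really are the duals of the corestriction-based differentials used in the filtration argument. This follows from the naturality of the isomorphism $H^q(S)\cong\mathrm{Hom}(H_q(S),\mathbb{Z})$ for free abelian groups $S$: restriction in cohomology is dual to corestriction in homology along the inclusions $\Stab(\sigma)\le\Stab(\sigma_i)$. The signs $(-1)^i$ agree on both sides.
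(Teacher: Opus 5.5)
Your proposal is correct and follows the paper's route. The paper likewise computes the homology of each row $E^1_{\bullet,q}$ through the essential filtration (Lemma \ref{lem:homoE1}), then passes to cohomology by noting that, because the fundamental domain is finite and the stabilizers are free abelian, $E_1^{\bullet,q}$ is the $\mathbb{Z}$-dual of $E^1_{\bullet,q}$; your universal coefficient computation and your remark that restriction is dual to corestriction just spell out the dualization step the paper states in one line.
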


By definition, every essential set of compatible canonical basis elements $\au$ is associated to an essential  vertex type $\tau(\au)$, so we do also have:

\begin{cor}\label{cor:rank}
    Let $K_q$ be the number of rank $q$ essential vertex types in $\Wh_\G$, then
    $$
       H^q(\POut(A_\G))=\mathbb{Z}^{K_q}. 
    $$
\end{cor}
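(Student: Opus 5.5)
The plan is to read the corollary off the $E_2$ page described in Proposition \ref{prop:column}, so the work splits into two parts: show that the spectral sequence collapses, then turn the count of essential families into a count of essential vertex types.

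\textbf{Collapse and identification of the abutment.} Since $\MM_\G$ is contractible, the equivariant spectral sequence (\ref{eq:spectralSeq}) converges to $H^{p+q}(\POut(A_\G))$. By Proposition \ref{prop:column}, obtained by dualizing Lemma \ref{lem:homoE1}, the $E_2$ page is zero outside the column $p=0$. Every differential $d_r$ with $r\geq 2$ changes $p$ by $r$, so each one starts or ends at a zero entry, and therefore $E_2=E_\infty$. With only one nonzero column, the filtration on $H^q(\POut(A_\G))$ has a single nonzero graded piece, and so $H^q(\POut(A_\G))\cong E_2^{0,q}$. No extension problem arises, since there is only one piece. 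Moreover $E_2^{0,q}$ is free abelian: the dualization argument given before the essential filtration shows it is $\mathrm{Hom}_\mathbb{Z}$ of a free abelian group $\mathbb{Z}^m$. Here $m$ is the number of essential compatible families $\au$ with $|\au|=q$.

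\textbf{Counting.} We need a bijection between essential compatible families $\au$ with $|\au|=q$ and essential vertex types of rank $q$. The bijection is the cone-point correspondence $\au\mapsto\tau(\au)$, with inverse $\tau\mapsto\B(\tau)$. By Definition \ref{def:conePoint} and the remarks after it, $\tau(\B(\tau))=\tau$ and $\B(\tau(\au))=\au$. By definition, $\au$ is essential exactly when $\tau(\au)$ is an essential vertex type, so the correspondence restricts to essential objects on both sides.

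It remains to match $|\au|$ with the rank. Each based partition $\tau_j$ contributes to $\B(\tau)$ exactly $l(\tau_j)-1$ canonical automorphisms: one for each petal except the petal containing the minimal element of $\G-\st(v_j)$. Summing over $j$ gives $|\B(\tau)|=\mathrm{rk}(\tau)$, which agrees with Lemma \ref{lem:baseGen} and with the fact that $\Stab(\tau)$ is free abelian of rank $\mathrm{rk}(\tau)$. Hence $m=K_q$.

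\textbf{Main obstacle.} The delicate step is the first one: making sure the dualization from the homology sequence to the cohomology sequence really preserves the concentration in column $0$, and that $E_2^{0,q}$ is free. The point to check is that $E^1_{\bullet,q}$ is a complex of finitely generated free abelian groups, because the fundamental domain $|\Wh_\G|$ is finite. Its homology is free and concentrated in degree $0$, so the complex is split exact in positive degrees. Applying $\mathrm{Hom}(-,\mathbb{Z})$ therefore gives cohomology $\mathbb{Z}^{K_q}$ in degree $0$ and zero elsewhere, with no $\mathrm{Ext}$ terms appearing.
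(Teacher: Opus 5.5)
Your proposal is correct and follows the same route as the paper. It reads $H^q(\POut(A_\G))$ off the single-column $E_2$ page of Proposition \ref{prop:column}, then converts the count of essential compatible families $\au$ with $|\au|=q$ into the count of rank $q$ essential vertex types via the cone-point bijection $\au\mapsto\tau(\au)$, $\tau\mapsto\B(\tau)$. You also make explicit what the paper leaves implicit: the collapse at $E_2$ with no extension problem, the Ext-free dualization from the homological $E^1$ page, and the identity $|\B(\tau)|=\mathrm{rk}(\tau)$ that matches $|\au|$ with the rank.
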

\begin{nota}
    The number $K_q$ can be computed algorithmically (see the Appendix \ref{appendix}).
\end{nota}

\begin{teo}
    The algebra $H^*(\POut(A_\G))$ is generated by the one-dimensional classes $\bar{\gamma}_I^j$ where  the $\bar{C}_{I}^j$ are canonical outer-automorphisms. \label{teo:oneDimPSO}

\end{teo}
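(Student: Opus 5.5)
We would follow the strategy of \cite{JMcCM}: identify $H^*(\POut(A_\G))$ with the $E_2^{0,*}$ column and exploit that the edge map of the equivariant spectral sequence is compatible with cup products. By Proposition \ref{prop:column} the spectral sequence collapses at $E_2$, and all nonzero entries lie in the column $p=0$. Hence there is no extension problem, and $H^q(\POut(A_\G))\cong E_2^{0,q}$. Moreover the edge map
$$H^q(\POut(A_\G))\longrightarrow E_2^{0,q}=\ker\Big(\prod_{\tau}H^q(\Stab(\tau))\to\prod_{\dim\sigma=1}H^q(\Stab(\sigma))\Big)$$
is induced by the restrictions $H^q(\POut(A_\G))\to H^q(\Stab(\tau))$, one for each vertex type $\tau$. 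It is therefore an isomorphism of graded rings, where the product on the kernel is taken componentwise in $\prod_\tau H^*(\Stab(\tau))$. So it suffices to show that this subring of compatible families of classes is generated in degree $1$.

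First we construct degree one classes. For each canonical outer-automorphism $\bar C^j_I$, we want a class in $E_2^{0,1}$ whose restriction to every $\tau\in\supp(\{\bar C^j_I\})$ is $\bar\gamma^j_I$. When $\{\bar C^j_I\}$ is essential, its support is the subcomplex $\C(\{\bar C^j_I\})$ by Lemma \ref{lem:EssentialCharact}, and Lemma \ref{lem:homoE1} identifies the degree-zero homology generator with exactly such a cocycle. We take these as the generators $\bar\gamma^j_I\in H^1(\POut(A_\G))$. Since $\bar C^j_I$ generates a cyclic subgroup, restriction of $H^1$ of the whole group is just a homomorphism evaluated on $\bar C^j_I$, so we can also define $\bar\gamma^j_I$ directly as a homomorphism $\POut(A_\G)\to\Z$. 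This uses the abelianization, which is free abelian by Corollary \ref{cor:rank} with $q=1$. A class $\bar\gamma^j_I$ for an inessential petal then decomposes as a sum of essential ones.

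Next, let $\au=\{\bar C^{j_1}_{I_1},\dots,\bar C^{j_q}_{I_q}\}$ be an essential compatible family, so that $\tau(\au)$ is a rank $q$ essential vertex type. We consider the cup product $\bar\gamma^{j_1}_{I_1}\cdots\bar\gamma^{j_q}_{I_q}$. We restrict it to each $\Stab(\tau)$ and expand in the exterior algebra with basis dual to $\B(\tau)$. At $\tau=\tau(\au)$ and at every $\tau'\geq\tau(\au)$ (which all support $\au$ by essentiality), the restriction contains the monomial $\mu_\au$ with coefficient $\pm1$. We then order the essential families of size $q$, for instance by comparing cone points in $\Wh_\G$, and argue that the matrix expressing these cup products in terms of the basis $\{\mu_\au\}$ of $E_2^{0,q}$ from Lemma \ref{lem:homoE1} is unitriangular. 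Since $E_2^{0,q}$ is free of rank $K_q$ with basis indexed by these families, the products then form a $\Z$-basis, and generation in degree $1$ follows.

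The main obstacle is this triangularity, specifically controlling the restriction of $\bar\gamma^{j}_{I}$ to vertices $\tau$ where $\bar C^j_I\notin\B(\tau)$. There the restriction is a sum of dual classes of petals refining $I$, or a combination involving the minimal-element petal $M$. We must show that the resulting extra monomials correspond either to families that are not essential, and hence vanish in $E_2$ by the filtration argument, or to families strictly later in the chosen order. To handle this, we would compute $\bar\gamma^j_I(\bar C^j_J)$ directly for petals $J$ of $\tau_j$ (it is $1$ if $J\subseteq I$, $0$ if $J\cap I=\emptyset$), and use the based-partition structure to order the families by inclusion of petals.
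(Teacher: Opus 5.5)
\textbf{Verdict: genuine gap.} Your framework is the paper's (and that of \cite{JMcCM}). Because $E_2$ is concentrated in the column $p=0$, the product of the restriction maps is an injective ring homomorphism $H^*(\POut(A_\G))\to\prod_\tau H^*(\Stab(\tau))$ with image $E_2^{0,*}$. The theorem therefore amounts to showing that products of degree-one classes span $E_2^{0,q}$ over $\Z$. The paper's proof simply asserts this. You rightly see that it needs an argument, but the unitriangularity you postpone \emph{is} that argument, so the proposal does not yet prove the statement.

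\textbf{The bookkeeping in your sketch is off.} Nothing ``vanishes in $E_2$'': $E_2^{0,q}=\ker d_1$ is a subgroup of $E_1^{0,q}$, not a quotient. The correct bookkeeping is as follows.
\begin{enumerate}
\item[(a)] $H_0(E^1_{\bullet,q})$ is free on the classes of $\mu_\au$ at the cone points $\tau(\au)$, $\au$ essential. So if $\{e_\au\}$ is the dual basis of $E_2^{0,q}$, the $e_\au$-coefficient of a cocycle $c$ is just $\langle c_{\tau(\au)},\mu_\au\rangle$, and inessential monomials at other vertices play no role.
\item[(b)] For a single component $X\not\ni\min_j$, the global class $\bar\gamma^j_X$ restricts to $\Stab(\tau)$ as the dual of the $\tau_j$-petal containing $X$, or to $0$ if that petal contains $\min_j$.
\item[(c)] Hence every product of such classes restricts at $\tau(\au')$ to $0$ or $\pm\mu_{\au'}$. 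The matrix you need is a $0,\pm1$ incidence matrix recording which petals of $\tau(\au')$ the chosen components land in.
\end{enumerate}

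\textbf{The real difficulty is which degree-one class to attach to a petal with several components, and your suggestion fails.} Taking the sum of the classes of its components restricts to $k(\bar C^j_I)^*$, with $k$ the number of components, so the diagonal entries are not $\pm1$. Concretely, take $A_\G=F_4$, $v_i=v_3$, $v_j=v_4$. Both minimal elements are $v_1$, $\C=\{2\}$ is shared, $D^4=\{4\}$ and $D^3=\{3\}$. The families $\au=\{\bar C^3_{\{2\}},\bar C^4_{\{2,3\}}\}$ and $\au'=\{\bar C^3_{\{2,4\}},\bar C^4_{\{2\}}\}$ are both essential. One computes:
\begin{itemize}
\item $\bar\gamma^3_{\{2\}}\bar\gamma^4_{\{3\}}=e_\au$;
\item $\bar\gamma^3_{\{4\}}\bar\gamma^4_{\{2\}}=e_{\au'}$;
\item $\bar\gamma^3_{\{2\}}\bar\gamma^4_{\{2\}}=e_\au+e_{\au'}$.
\end{itemize}
Your sum-classes therefore give $2e_\au+e_{\au'}$ and $e_\au+2e_{\au'}$, a matrix of determinant $3$, so they do not span over $\Z$. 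Even with a single representative component per petal, the choice matters. Choosing $\{2\}$ for both $\{2,3\}$ and $\{2,4\}$ gives two equal products, a singular matrix. Choosing the dominant components $\{3\}$ and $\{4\}$ gives $e_\au$ and $e_{\au'}$ exactly; this is the choice made in case 1.2.1 of Proposition \ref{prop:baseIsomor}. (The paper's $\bar\gamma^j_K=\sum_{I\in K}\bar\gamma^j_I$, if read as a sum over components, has the same defect.)

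\textbf{What is missing.} You need three things:
\begin{enumerate}
\item[(i)] a description, valid in every rank, of the non-splittable petals avoiding the minimal element in an essential vertex type, extending Lemma \ref{lem:rank2Partitions};
\item[(ii)] a rule choosing one representative component from each such petal;
\item[(iii)] a proof that, with this rule, the incidence matrix above is triangular with $\pm1$ on the diagonal for some fixed order on essential families.
\end{enumerate}

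A minor point: freeness of $H_1(\POut(A_\G))$ on the single-component canonical classes comes from the Koban--Piggott presentation, not from Corollary \ref{cor:rank}, since $H^1$ is always free.
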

\begin{proof}

By Lemma \ref{lem:baseGen},  $\POut(A_\G)$ is generated by $\left\{\bar{C}_{I}^j \mid C_{I}^j  \text{ is a canonical automorphism} \right\}$, its first homology is generated by the associated classes $\left[\bar{C}_{I}^j\right]$ and $H^1\left(\POut(A_\G)\right)$ is generated by the dual basis $\left\{\bar{\gamma}_I^j\right\}$. As we have seen in Proposition \ref{prop:column}, the $E_2$ page is concentrated in a single column and the groups in $E_2^{0, q}$ are free abelian.  The  generating set corresponds to products of elements of the form  $\bar{\gamma}_{K}^j=\sum_{I \in K} \bar{\gamma}_{I}^j$. These cohomology classes $\bar{\gamma}_{K}^j$ come from a generating set for the stabilizer of an essential vertex type, so  the partial conjugations $C_{I}^j$ are canonical automorphisms.
\end{proof}

Theorem A follows from Corollary \ref{cor:rank} and Theorem \ref{teo:oneDimPSO}.

\begin{ej}
    We will compute the cohomology groups $H^q(\POut(\A))$ for $$A_\G=\langle v_1, v_2, v_3, v_4, v_5\ |\ [v_1,v_2]=1\rangle$$  by computing the number of essential vertices in $\Wh_\G$.
    \begin{figure}[ht!]
    \centering
   \begin{tikzpicture}[
    vertex/.style={circle, draw, fill=black, inner sep=2.3pt},
    node distance=8cm
]
 \node at (-5,5.5) {$\G=$};

  \node[vertex, label={below: $v_1$}] (u1) at (-4.4,5.5) {};
    \node[vertex, label={below: $v_2$}] (u2) at (-3.4,5.5) {};
    \node[vertex, label={right: $v_4$}] (u3) at (-1.6,5.5) {};
    \node[vertex, label={above: $v_3$}] (u4) at (-2.4,6.2) {};
    \node[vertex, label={below: $v_5$}] (u5) at (-2.4,4.8) {};

 \draw (u1) -- (u2);

\end{tikzpicture}
 \caption{The graph $\G$}
 \label{fig:graphsEx}
\end{figure}
    As we can see, $\A$ is nearly a free group, as a consequence, $\Wh_\G$ will have some similarities with $\Wh_4$ (a representation of $\Wh_4$ can be found at Figure 2 in \cite{MacCammondMeier}); however, the adjacency between $v_1$ and $v_2$ results in their associated based partitions always being compatible, so the number of possible combinations increases. 

    The subposet of $\Wh_\G$ associated to vertex types where   based partitions associated to $v_1$ and $v_2$ are trivial is the same as the subposet of $\Wh_4$ where the based partitions associated to one of the vertices are always trivial, i.e, nine rank 1  and nine rank 2 vertex types.

    The subposet of $\Wh_\G$ associated to vertex types where  the only non-trivial based partitions are the ones associated to $v_1$  is the same as the subposet of $\Wh_4$ where the only non-trivial based partitions are the ones associated to a single vertex, i.e., three rank 1 vertex types and 1 rank 2 vertex type. The same occurs with the subposet  associated to vertex types where  the only non-trivial based partitions are the ones associated to $v_2$. Since $v_1$ and $v_2$ are adjacent, the subposet of $\Wh_\G$ associated to vertex types where  the only non-trivial based partitions are the ones associated to $v_1$ and $v_2$ is the direct product of their respective subposets. This direct product results in six rank 1, eleven rank 2, six rank 3 and one rank 4 vertex types.

    We are left with the possible combinations of $\{v_1,v_2\}$ with $\{v_3,v_3,v_5\}$. There are precisely two rank 2 vertex types with non-trivial based partitions at $v_1$ and $v_3$: if we split $v_4$ in one based partition we have to split $v_5$ in the other one, and vice versa. The same happens for any pair $(v_i,v_j)$ with $i\in\{1,2\}$ and $j\in\{3,4,5\}$. As a consequence, we  have twelve rank 2 vertex types associated to these pairs. There will also be two rank 3 vertices associated to the triple $(v_1,v_2,v_3)$: if we have split $v_4$ in the based partition associated to $v_3$, then we can split $v_5$ in  the based partitions associated to both $v_1$ and $v_2$ and the other way around. In the same way, we have two rank 3 vertices associated to the triples $(v_1,v_2,v_4)$ and $(v_1,v_2,v_5)$. 

    Once we described all the possible vertex types, we have to see which of them are essential. For each $v_i$, we have that $\G-\st(v_i)$ has three connected components, this means that a vertex type $\tau$ will be inessential if and only if it has a based partition $\tau_i=\{\{v_i\}, \{v_j\},\{v_k,v_l\}\}$ such that $v_j$ is the minimal element in $\G-\st(v_i)$ and the petal $\{v_k,v_l\}$ can be split.
    As a consequence, between the fifteen rank 1 vertex types, ten of them are essential (the ones where the minimal element has not been split). 
    
    For the rank 2  vertex types, the nine associated to $\{v_3,v_4,v_5\}$ are essential, since they are maximal (any other split would produce a crossing). Between the eleven associated to $\{v_1,v_2\}$, six of them are essential; the two vertex types where one of the based partitions is totally split and the rest are trivial, and the ones where the minimal element has not been split in any of the based partitions. All the twelve rank 2 vertex types associated to mixed pairs $(v_i,v_j)$ with $i\in\{1,2\}$ and $j\in\{3,4,5\}$ are essential, since the only based partition which admits a split is the one associated to $v_2$ in case $i=1$ or vice versa, and that based partition is trivial.

    For the rank 3  vertex types, the six associated to triples $(v_1,v_2,v_j)$ with $j\in\{3,4,5\}$ are essential since they are maximal. It remains to check the six vertex types associated to the pair $(v_1,v_2)$. In these vertex types one of the based partitions is totally split and  the other one will be $\tau_i=\{\{v_i\}, \{v_j\},\{v_k,v_l\}\}$. The essential ones are the ones where $v_j$ is not the minimal element, so there are four of them.

    In summary, we have the next information:
    \begin{table}[h]
\centering
\begin{tabular}{|c|c|c|}
\hline
\textbf{Rank} & \textbf{Vertex types} & \textbf{Essential} \\
\hline
0 & 1 & 1 \\
1 & 15 & 10 \\
2 & 32 & 27 \\
3 & 12 & 10 \\
4 & 1 & 1 \\
\hline
\end{tabular}
\caption{$\Gamma$-Whitehead poset}
\label{tab:whitehead_poset}
\end{table}

And, as a consequence
$$
H^q(\POut(\A))=\begin{cases}
    \mathbb{Z}, \textrm{ if } q=0 \\
    \mathbb{Z}^{10}, \textrm{ if } q=1 \\
    \mathbb{Z}^{27}, \textrm{ if } q=2 \\
    \mathbb{Z}^{10}, \textrm{ if } q=3 \\
    \mathbb{Z}, \textrm{ if } q=4 \\
    0 \textrm{ if } q\geq 5
\end{cases}
$$
Observe that even if $\A$ looks close to $F_4$, the differences in the homology of the symmetric outer-automorphisms group are remarkable:  by the formula given in Lemma 5.2 in \cite{JMcCM}, we get 
$$
H^q(\POut(F_4))=\begin{cases}
    \mathbb{Z}, \textrm{ if } q=0 \\
    \mathbb{Z}^{8}, \textrm{ if } q=1 \\
    \mathbb{Z}^{16}, \textrm{ if } q=2 \\
    0 \textrm{ if } q\geq 3 
\end{cases}
$$
In Appendix \ref{appendix}, we give a brief explanation of  an algorithm that computes not just the number of essential vertex types in $\Wh_\G$, but also the $n$-simplices in $|\Wh_\G|$ and, can be used to compute the $E^1$ page of the equivariant spectral sequence for the $\POut(A_\G)$ action on $MM_\G$. The algorithm can be implemented in Python and be used to check the previous example; though we encourage the inquisitive reader to try to do it by hand (a couple of paper sheets will be probably needed).
\end{ej}

\section{Computing $H^*(\PAut(A_\G))$}\label{sec:PSA}
\subsection{Some previous results}
We begin with some well known results about homology and cohomology of right-angled Artin groups, which we will be necessary later, and follow from the fact that the Salvetti complex associated to a graph $\G$, is a classifying space for the RAAG $A_\G$ (see \cite{Kob}). 

\begin{prop}
    Let $R$ be a commutative ring and $A_\G$ a RAAG, then
    $$
        H_k(A_\G,R)\cong R^{N_k}
    $$
    where $N_k$ is the number of $k$-cliques in $\G$. The cohomology group $H^k(A_\G,R)$ has the same rank.\label{prop:HomRaag}
\end{prop}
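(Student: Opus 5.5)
The plan is to compute the homology of a CW model for $A_\G$ cellularly. The model is the Salvetti complex $S_\G$, which by \cite{Kob} is a $K(A_\G,1)$, so $H_k(A_\G,R)\cong H_k(S_\G;R)$ and $H^k(A_\G,R)\cong H^k(S_\G;R)$. First I recall the construction of $S_\G$. It has a single $0$-cell. It has one $k$-cell $e_\sigma$ for each $k$-clique $\sigma=\{v_{i_1},\dots,v_{i_k}\}$ of $\G$, where $e_\sigma$ is the $k$-torus $T^\sigma=(S^1)^k$ with cellular structure $e^0\cup$ circles $\cup\cdots$. Equivalently, $S_\G$ is the union inside the torus $T^{|V(\G)|}$ of the subtori $T^\sigma$ as $\sigma$ runs over the cliques. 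So the cellular chain group $C_k(S_\G;R)$ is free over $R$ of rank $N_k$, with $N_0=1$ corresponding to the empty clique.

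The key step is to show that all cellular boundary maps vanish. For a $k$-cell $e_\sigma$, the attaching map is the attaching map of the top cell of the standard CW structure on the torus $T^\sigma$, composed with the inclusion $T^\sigma\hookrightarrow S_\G$. In the torus the top cell attaches via the commutator-type word. Its cellular boundary is a signed sum over codimension-one faces: each face $T^{\sigma-\{v\}}$ appears twice, with opposite orientations, coming from the two sides $x_v=0$ and $x_v=1$ of the cube $[0,1]^k$. Hence the degree of $e_\sigma\to e_{\sigma-\{v\}}$ is $1-1=0$. Because the inclusion $T^\sigma\hookrightarrow S_\G$ is cellular and sends cells of $T^\sigma$ bijectively onto the cells $e_\tau$ with $\tau\subseteq\sigma$, these degrees are computed inside $T^\sigma$. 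Therefore $\partial e_\sigma=0$ in $C_{k-1}(S_\G;R)$.

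Since the differentials are zero, $H_k(S_\G;R)=C_k(S_\G;R)\cong R^{N_k}$. For cohomology, the cellular cochain complex is $\mathrm{Hom}_R(C_\bullet,R)$. It again has zero differentials and is free of rank $N_k$ in degree $k$, so $H^k(A_\G,R)\cong R^{N_k}$, which has the same rank. One can also reach this by the universal coefficient theorem, since all homology groups are free.

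The main obstacle is the sign bookkeeping in the vanishing of the boundary map. Rather than carry it out cell by cell, I would argue that $\partial$ is natural under the cellular inclusions $T^\sigma\hookrightarrow S_\G$. It is then enough to know that the standard product CW structure on the torus $(S^1)^k$ has zero cellular differentials. That follows from the Künneth formula applied to $S^1$, whose one-cell structure has $\partial=0$, together with the product formula $\partial(a\times b)=\partial a\times b\pm a\times\partial b$.
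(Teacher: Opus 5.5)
Your proposal is correct and follows the route the paper indicates. The paper gives no detailed argument, only attributing the result to the Salvetti complex being a $K(A_\Gamma,1)$. Your cellular computation supplies exactly the standard details behind that citation: one $k$-cell per $k$-clique, and all boundary maps vanish because each cell is the top cell of a subtorus.
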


\begin{prop}
    Let $R$ be a commutative ring and $A_\G$ a RAAG. Let $n$ be the number of vertices of $\G$ and $\{e^1,...,e^n\}$ the standard basis of $H^1(\A,R)$. Then the cohomology ring $H^\ast(A_\G,R)$ is the exterior algebra $\bigwedge\left(R\right)^n$  modulo the relations $e^i\wedge e^j=0$ whenever the  vertices in $\G$ associated to $i$ and $j$ are not adjacent.  \label{prop:1clases}
\end{prop}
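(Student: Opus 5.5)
The statement is the classical description of the cohomology ring of a RAAG, and I would prove it through the Salvetti complex $S_\G$, which is a $K(A_\G,1)$.

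\textbf{Cell structure.} Recall that $S_\G$ has one $k$-cell $e_\sigma$, a $k$-torus cell, for each $k$-clique $\sigma$ of $\G$. It is obtained as a subcomplex of the torus $T^n=(S^1)^n$ with its product cell structure: we keep exactly the faces indexed by cliques. All attaching maps are products of commutator loops, so the cellular boundary maps vanish, both for $S_\G$ and for $T^n$. This gives Proposition \ref{prop:HomRaag} directly, and it shows that $H^*(S_\G,R)$ is free on the duals $e^\sigma$ of the clique cells.

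\textbf{Ring structure via restriction.} Consider the inclusion $\iota:S_\G\hookrightarrow T^n$. Its cellular cochain map is the projection onto the cells indexed by cliques. Since all differentials vanish, $\iota^*:H^*(T^n,R)\to H^*(S_\G,R)$ is surjective. We know $H^*(T^n,R)=\bigwedge(R^n)$, with basis the products $e^{i_1}\wedge\cdots\wedge e^{i_k}$ dual to the cells $e_{\{i_1,\dots,i_k\}}$. Because $\iota^*$ is a ring homomorphism, the cup products in $S_\G$ are the images of wedge products. Hence $e^{i_1}\cdots e^{i_k}$ maps to $\pm e^{\sigma}$ when $\{i_1,\dots,i_k\}=\sigma$ is a clique, and to $0$ otherwise. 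The kernel of $\iota^*$ is therefore the $R$-span of the monomials indexed by non-cliques.

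\textbf{Identifying the kernel.} It remains to show that this span equals the ideal $J$ generated by the products $e^i\wedge e^j$ with $i,j$ non-adjacent.

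First, $J\subseteq\ker\iota^*$: the kernel is an ideal, and it contains each such $e^i\wedge e^j$.

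Conversely, take a monomial indexed by a non-clique. It contains some non-adjacent pair $i,j$, so up to sign it is $e^i\wedge e^j\wedge(\cdots)$, which lies in $J$.

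Therefore $H^*(A_\G,R)\cong\bigwedge(R^n)/J$.

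\textbf{Main obstacle.} The only delicate point is justifying that cup products in $S_\G$ are computed by the cellular formula. Using the restriction from $T^n$ avoids any diagonal-approximation computation, but it requires knowing that $S_\G$ is a subcomplex of $T^n$ and is aspherical. Both facts are standard; I would cite \cite{Kob} for them.
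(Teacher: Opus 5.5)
Your proposal is correct and follows the route the paper indicates. The paper gives no argument beyond noting that the statement follows from the Salvetti complex being a $K(A_\G,1)$ (citing \cite{Kob}); your restriction along $S_\G\hookrightarrow T^n$ supplies the cup-product computation that this citation leaves implicit. The one point to tighten is the vanishing of the cellular differential: it is cleanest to derive it from the fact that the cellular chain complex of $T^n$ is the $n$-fold tensor power of that of $S^1$, since your ``commutator loops'' description only covers the $2$-cells.
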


\noindent Note that, in particular, the previous proposition implies that $H^\ast(A_\G,R)$ is generated in degree 1.

\subsection{Applying the Leray-Hirsch Theorem}
We are assuming that $Z(\A)$ is trivial (see the remark after Theorem \ref{teo:partialGen}), so we have a  short exact sequence:
$$
1\to A_\G\to \PAut(A_\G)\to \POut(A_\G) \to 1
$$

\noindent which allows us to apply the Lyndon-Hochschild-Serre spectral sequence. As cd$(A_\G)=k$, where $k$ is the size of the biggest clique in $\G$, then this spectral sequence is concentrated in the first $k+1$ rows of the first quadrant. 

Exactly as in Section 6 in \cite{JMcCM}, there is a fibration which allows us to apply the Leray-Hirsch Theorem. This theorem implies that the differential $d^2$ of the spectral sequence is trivial, so the sequence collapses at page 2. This second page is  given by
$$E_2^{*, q}=H^*(\POut(A_\G), H^q(A_\G))=H^*(\POut(A_\G),\Z^{t_q})$$
where $t_q$ is the number of $q$-cliques in $\G$.  Following the argument of \cite{JMcCM}, we only need to check $H^k(\A)$ is $\Z$-free, which is given by Proposition \ref{prop:HomRaag} and the statement of the next lemma.

\begin{lem}
The map 
$$
\iota^*: H^*(\PAut(A_\G)) \rightarrow H^*( A_\G)
$$
is surjective, where $\iota:  A_\G\rightarrow  \PAut(A_\G)$ is the inclusion. \label{lem:tecnic}

\end{lem}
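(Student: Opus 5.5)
Since $H^*(\A)$ is generated in degree $1$ (Proposition \ref{prop:1clases}) and $\iota^*$ is a ring homomorphism, it suffices to show that $\iota^*$ is surjective in degree $1$. Then every product $e^{i_1}\wedge\cdots\wedge e^{i_k}$ is the image of the corresponding product of preimages. So the plan is to construct, for each vertex $v_i$, a class $\xi^i\in H^1(\PAut(\A))=\mathrm{Hom}(\PAut(\A),\Z)$ with $\iota^*(\xi^i)=e^i$, where $e^i$ is the homomorphism $\A\to\Z$ sending $v_i\mapsto 1$ and $v_k\mapsto 0$ for $k\neq i$.

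First we make the inclusion explicit. The element $v_i\in\A$ goes to the inner automorphism $w\mapsto v_iwv_i^{-1}$. We are assuming trivial centre, so $\iota$ is injective. By the observation at the beginning of Section \ref{sec:3}, this inner automorphism equals $\prod_{A}C^{v_i}_A$, where $A$ runs over the connected components of $\G-\st(v_i)$; these partial conjugations commute by Theorem \ref{teo:presentation}. Each such component is nonempty, because $v_i$ is not dominating.

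Next we define the homomorphisms. Fix for each $i$ one component $A_i$ of $\G-\st(v_i)$, for instance the one containing the minimal element. Let $\xi^i$ send the generator $C^{v_i}_{A_i}$ to $1$ and every other generator $C^{v_j}_B$ to $0$. We must check that $\xi^i$ is well defined, i.e.\ that it kills the relators of the Koban--Piggott presentation (Theorem \ref{teo:presentation}). Relations i) and ii) are commutators, and commutators die under any map to an abelian group. Relation iii) is also a commutator $[C^u_AC^u_B,C^v_A]$, so it dies as well. Hence $H^1(\PAut(\A))$ is free abelian on the duals $\gamma^j_B$ of all the generators $C^{v_j}_B$, and $\xi^i=\gamma^i_{A_i}$.

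Finally we compute the restriction. We have $\iota^*(\xi^i)(v_j)=\sum_{A}\xi^i(C^{v_j}_A)$, which equals $1$ if $j=i$, since exactly one term, $A=A_i$, contributes; it equals $0$ if $j\neq i$. So $\iota^*(\xi^i)=e^i$. In fact any $\gamma^i_A$ works, which matches the fact that the fibre classes restrict independently of the chosen component.

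The main point needing care is not the abelian verification but the reduction from degree $1$ to all degrees. It uses that $\iota^*$ is multiplicative and that the relations $e^i\wedge e^j=0$ for non-adjacent $v_i,v_j$ impose no obstruction to surjectivity. They do not, because surjectivity only requires that the generators of the target have preimages. If one wanted the proof to be independent of Laurence's and Koban--Piggott's results, the delicate step would be the abelianization computation; here we simply quote Theorem \ref{teo:presentation}.
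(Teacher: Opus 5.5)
Your proof is correct and follows essentially the same route as the paper: identify $\iota(v_i)$ with the product of the partial conjugations $C^{v_i}_A$ over the components of $\G-\st(v_i)$, use that the Koban--Piggott relators are commutators so that $H^1(\PAut(\A))$ is free on the duals $\gamma^j_B$, deduce surjectivity in degree $1$, and then use that $H^*(\A)$ is generated in degree $1$. Your explicit preimage $\gamma^i_{A_i}$ with $\iota^*(\gamma^i_{A_i})=e^i$ is slightly more careful than the paper. The paper only asserts that $H_1(\A)\to H_1(\PAut(\A))$ is injective, whereas dualizing to get surjectivity on $H^1$ really needs the image to be a direct summand, and that is exactly what your computation verifies.
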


\begin{proof}
    We claim first that the map

        $$
H^1(\PAut(A_\G)) \rightarrow H^1(\A)
$$
is surjective. To see it, note that the injection $A_\G \hookrightarrow \PAut(A_\G)$ sends  each basis element $v_j$  to the symmetric automorphism

$$
C_{[n] \backslash\{\mathrm{st}(v_j)\}}^j=C_{I_1}^j \cdot C_{I_2}^j\cdots C_{I_k}^j
$$
where the $I_1,...,I_k$ are connected components in $\G-\mathrm{st}(v_j)$.  $H_1\left(\PAut(A_\G)\right)$ and $H_1\left(\A\right)$  are  free abelian groups with the same generating set  as   $\PAut(A_\G)$ and $A_\G$  respectively. The partial conjugations form a generating set for $\PAut(A_\G)$ that gives rise to a minimal generating set for the abelianization, so the map

$$
H_1\left(\A\right) \rightarrow H_1\left(\PAut(A_\G)\right) 
$$

\noindent is injective and the claim follows. As, by Proposition \ref{prop:1clases}, $H^*(\A)$ is generated in degree one, we see that the map

$$
H^*(\PAut(A_\G)) \rightarrow H^*(\A)
$$

\noindent is also surjective.

\end{proof}

\begin{cor} Let $K_i$ be the number of k essential vertex types of rank $i$ (generators of $H^i(\POut(A_\G) )$) and   $N_j$  the number of $j$-cliques in $\G$  (generators of $H^j(\A)$, with $N_0=1$).  Then

 $$
 H^q(\PAut(A_\G))\cong \bigoplus_{i+j=q} H^i(\POut(A_\G)) \otimes H^j(\A) 
 $$ 
 $$
\cong \bigoplus_{i+j=q} H^i(\POut(A_\G),H^j(A_\G)  ) \cong \bigoplus_{i+j=q} \mathbb{Z}^{K_i\cdot N_j}
$$

\end{cor}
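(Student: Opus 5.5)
The plan is to read off the decomposition from the collapsed Lyndon--Hochschild--Serre spectral sequence of
$$1\to A_\G\to \PAut(A_\G)\to \POut(A_\G)\to 1,$$
which is available because $Z(\A)$ is trivial (Remark after Theorem \ref{teo:partialGen}). As in Section 6 of \cite{JMcCM}, we realize this extension as a fibration with fibre a $K(\A,1)$, namely the Salvetti complex, and total space and base classifying spaces for $\PAut(A_\G)$ and $\POut(A_\G)$.

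The Leray--Hirsch hypotheses come in two parts. First, $H^j(\A)$ is free of finite rank $N_j$ in each degree, by Proposition \ref{prop:HomRaag}. Second, the restriction $\iota^*$ to the fibre is surjective, by Lemma \ref{lem:tecnic}. Choosing lifts in $H^*(\PAut(A_\G))$ of a $\Z$-basis of $H^*(\A)$, Leray--Hirsch gives an isomorphism of $H^*(\POut(A_\G))$-modules
$$H^*(\PAut(A_\G))\cong H^*(\POut(A_\G))\otimes H^*(\A).$$
Taking degree $q$ yields the first displayed isomorphism. The same surjectivity shows that the action of $\pi_1$ of the base on $H^j(\A)$ is trivial: $\PAut(A_\G)$ acts on $H^*(\A)$ by conjugation, and this action fixes the image of $\iota^*$, which is everything. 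Hence the local coefficients are untwisted, $E_2^{i,j}=H^i(\POut(A_\G),H^j(\A))$, and the spectral sequence collapses, giving the middle isomorphism.

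For the counting, use that $H^j(\A)\cong\Z^{N_j}$ is a trivial free module, so
$$H^i(\POut(A_\G),\Z^{N_j})\cong H^i(\POut(A_\G))^{N_j}.$$
By Corollary \ref{cor:rank}, $H^i(\POut(A_\G))\cong\Z^{K_i}$, so each summand is $\Z^{K_i N_j}$; no Künneth $\mathrm{Tor}$ terms appear since everything is free. Since there are no extension problems once we have the module-level Leray--Hirsch isomorphism, summing over $i+j=q$ gives the stated formula.

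The main obstacle is justifying Leray--Hirsch over $\Z$, specifically the existence of global classes restricting to a basis of $H^*(\A)$ in every degree, not only in degree $1$. This is where Lemma \ref{lem:tecnic} is used. One lifts the degree-one generators $e^i$ to $H^1(\PAut(A_\G))$, and takes cup products of the lifts of $e^{i_1},\dots,e^{i_j}$ for each $j$-clique $\{v_{i_1},\dots,v_{i_j}\}$. These restrict to the standard basis of $H^j(\A)$ given by Proposition \ref{prop:1clases}, and finiteness of the fundamental domain ensures the base has finite-type cohomology, as Leray--Hirsch requires.
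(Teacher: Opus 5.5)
Your proposal is correct and follows the paper's route. Like the paper, you realize $1\to A_\G\to\PAut(A_\G)\to\POut(A_\G)\to 1$ as a fibration as in Section 6 of \cite{JMcCM}, apply Leray--Hirsch using Proposition \ref{prop:HomRaag} and the surjectivity of Lemma \ref{lem:tecnic}, and read off the ranks from Corollary \ref{cor:rank}; your extra details (triviality of the action deduced from surjectivity of restriction, and lifting the clique products of degree-one classes) just make explicit what the paper leaves to \cite{JMcCM}. One small remark: Leray--Hirsch only requires the fibre cohomology to be free of finite type, so your appeal to finite-type cohomology of the base is harmless but not needed.
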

\begin{proof}
    The result follows directly from    Corollary \ref{cor:rank}, Proposition \ref{prop:HomRaag} and the Leray-Hirsch Theorem. 
\end{proof}

So, with the same argument as in  Corollary 6.4 in \cite{JMcCM}, we get:
\begin{cor}
   The ring $H^*(\PAut(A_\G))$ is generated in degree 1. \label{cor:onedimPAut}
\end{cor}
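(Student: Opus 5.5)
The plan is to follow the argument of Corollary 6.4 in \cite{JMcCM}, using the Leray-Hirsch decomposition established above. By the previous corollary, the Leray-Hirsch theorem for the fibration associated to $1\to A_\G\to \PAut(A_\G)\to \POut(A_\G)\to 1$ gives an isomorphism of $H^*(\POut(A_\G))$-modules
$$H^*(\PAut(A_\G))\cong H^*(\POut(A_\G))\otimes H^*(A_\G).$$
Concretely, the module structure on the left comes from the pullback $\pi^*$ along the projection $\pi:\PAut(A_\G)\to\POut(A_\G)$. The isomorphism is obtained by choosing classes $s(x)\in H^*(\PAut(A_\G))$ with $\iota^*s(x)=x$ for $x$ running over a $\Z$-basis of $H^*(A_\G)$. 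Such lifts exist by Lemma \ref{lem:tecnic}. Every element of $H^q(\PAut(A_\G))$ is then a sum of products $\pi^*(a)\cdot s(x)$, with $a\in H^i(\POut(A_\G))$ and $i+\deg x=q$.

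The key step is to choose the lifts $s(x)$ themselves as products of degree one classes. By Proposition \ref{prop:1clases}, $H^*(A_\G)$ is generated in degree $1$. A $\Z$-basis is given by the monomials $e^{j_1}\wedge\cdots\wedge e^{j_r}$, where $\{v_{j_1},\dots,v_{j_r}\}$ runs over the cliques of $\G$. By Lemma \ref{lem:tecnic} (more precisely its degree one claim), each $e^j$ has a lift $\epsilon^j\in H^1(\PAut(A_\G))$. We then set $s(e^{j_1}\wedge\cdots\wedge e^{j_r})=\epsilon^{j_1}\cup\cdots\cup\epsilon^{j_r}$. Since $\iota^*$ is a ring homomorphism, this is again a lift of the monomial. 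Leray-Hirsch only requires \emph{some} choice of lifts of a basis, so these product lifts are legitimate.

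It remains to handle the base. By Theorem \ref{teo:oneDimPSO}, every $a\in H^i(\POut(A_\G))$ is a sum of products of the one dimensional classes $\bar\gamma_I^j$. Since $\pi^*$ is a ring homomorphism, $\pi^*(a)$ is a sum of products of the degree one classes $\pi^*(\bar\gamma_I^j)$. Combining this with the previous step, every element of $H^q(\PAut(A_\G))$ is a $\Z$-linear combination of cup products of classes of degree $1$. Hence the ring is generated in degree $1$.

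The only delicate point is the first step: one has to make sure that the Leray-Hirsch isomorphism holds for the particular lift we chose, and not merely for some lift. I would check this directly by inspecting the proof of the previous corollary, as in Section 6 of \cite{JMcCM}. There the only input is that the restriction of the chosen classes to the fibre gives a $\Z$-basis of the free module $H^*(A_\G)$. This holds for our product lifts by Proposition \ref{prop:HomRaag} and Proposition \ref{prop:1clases}.
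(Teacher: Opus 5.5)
Your proposal is correct and follows the paper's route. The paper simply invokes the argument of Corollary 6.4 in \cite{JMcCM}: the Leray--Hirsch decomposition as an $H^*(\POut(A_\G))$-module, generation of $H^*(\POut(A_\G))$ by the $\bar\gamma_I^j$, and lifts of a basis of $H^*(A_\G)$. Your explicit choice of lifts as cup products $\epsilon^{j_1}\cup\cdots\cup\epsilon^{j_r}$ of degree-one lifts is exactly the detail needed to carry that argument over to RAAGs, because unlike $H^*(F_n)$ the ring $H^*(A_\G)$ has basis classes above degree one, and Leray--Hirsch indeed holds for any choice of classes restricting to a basis of the fibre cohomology.
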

This concludes the proof of Theorem B.

\section{The cohomology ring $H^*(\PAut(\A))$}
\label{sec:ring}

In this section we will give a ring presentation that we conjecture gives the   cohomology ring  $H^*(\PAut(\A))$. We also prove some partial results that provide some evidence for this conjecture.
Let $i,j$ be two vertices which are not adjacent in $\G$, let $D^j, D^i$ be the respective dominant components of $\G-\st(v_i)$, $\G-\st(v_j)$ and $\mathcal{C}$ a shared component. As in Section \ref{sec:PSO}, we denote by  $\gamma_{A}^i$  the dual class $[C_{A}^i]^*$. We want to prove that the next two relations  hold in $H^*(\PAut(\A))$:

\begin{itemize}
    \item[(1)] $\gamma_{D^j}^i\wedge \gamma_{D^i}^j=0$ 
    \item[(2)] $\gamma_{\mathcal{C}}^i\wedge \gamma_{\mathcal{C}}^j=\gamma_{\mathcal{C}}^i\wedge \gamma_{D^i}^j+ \gamma_{D^j}^i\wedge\gamma_{\mathcal{C}}^j$ 
\end{itemize}

\begin{prop} 
   With the previous notation, relation $(1)$ holds in $H^*(\PAut(A_\G))$.\label{prop:rel1}
\end{prop}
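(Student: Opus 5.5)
We prove relation $(1)$ by showing that the product $\gamma_{D^j}^i\wedge\gamma_{D^i}^j$ pairs to zero with every class of $H_2(\PAut(A_\G))$.

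The approach is the standard one for Brownstein--Lee type relations. A degree-2 class vanishes if it restricts to zero on enough free abelian subgroups of rank 2 whose images generate $H_2$. A cleaner route uses the universal coefficient / Hopf description: $H^1(\PAut(A_\G))=\mathrm{Hom}(H_1,\Z)$ is free with dual basis $\{\gamma_A^k\}$ indexed by the partial conjugations $C_A^k$ with $A$ a single component (Theorem~\ref{teo:conmu} shows these give a basis of $H_1$, since all relations are commutators). The cup product $H^1\otimes H^1\to H^2$, followed by evaluation on $H_2$, factors through Hopf's formula. For a commutator relation $[x,y]=1$, the corresponding element of $H_2$ pairs with $\alpha\wedge\beta$ via $\alpha(x)\beta(y)-\alpha(y)\beta(x)$.

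The key steps are as follows.

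First, write $H_2(\PAut(A_\G))\cong (R\cap[F,F])/[F,R]$ using the presentation of Theorem~\ref{teo:conmu}, where $F$ is free on the partial conjugations and $R$ is the normal closure of the relators. Every relator lies in $[F,F]$. Hence $H_2$ is a quotient of the free abelian group on the relators i), ii), iii).

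Second, for each relator compute the pairing with $\gamma_{D^j}^i\wedge\gamma_{D^i}^j$ using the formula above. For iii), $[xy,z]$ is expanded bilinearly as $[x,z]+[y,z]$ modulo $[F,R]$-type terms.

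For relators i) and ii), the only way to obtain a nonzero value is for $\{x,y\}=\{C_{D^j}^i,C_{D^i}^j\}$. That pair does not appear among i) or ii). Indeed, $v_i\notin\st(v_j)$; both components are dominant, so neither is subordinate and neither is shared, whereas ii) requires them to be shared-and-distinct or to include a subordinate. For relator iii), $[C_A^uC_B^u,C_A^v]$ with $A$ shared and $B$ dominant, the pairing picks up $\gamma_{D^j}^i(C_B^u)\gamma_{D^i}^j(C_A^v)$. This is nonzero only if $A=D^i$ as a component of $\G-\st(v_j)$, but $A$ is shared, not dominant, so the pairing is zero. The symmetric term similarly requires $C_A^u=C_{D^i}^j$, which is impossible.

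Hence $\gamma_{D^j}^i\wedge\gamma_{D^i}^j$ evaluates to zero on all of $H_2$. Since $H^2$ is free by Theorem~B (torsion-free), the universal coefficient theorem gives $H^2\cong\mathrm{Hom}(H_2,\Z)$, and therefore the class is zero.

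The main obstacle is justifying rigorously that the cup product pairing with Hopf classes is given by the antisymmetrized formula, including the correct treatment of iii) as a product commutator. This is a known lemma (e.g.\ as used in \cite{JMcCM} for the free group case), and I would invoke it or verify it via the $2$-complex of the presentation. An alternative I would try first if that is messy is restriction. Both $C_{D^j}^i$ and $C_{D^i}^j$ appear in iii) only together with shared components, so the relevant rank-2 abelian subgroups generated by commuting pairs detect the product as $0$.
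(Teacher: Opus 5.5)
Your argument is correct, but it takes a different route from the paper's. The paper defines $\varphi_1\colon\PAut(A_\G)\to F_2=\langle x,y\rangle$ by $C^i_{D^j}\mapsto x$, $C^j_{D^i}\mapsto y$, and every other partial conjugation $\mapsto 1$. It checks against the Koban--Piggott presentation that every relator dies. This is the same combinatorial fact you use: no relator couples $C^i_{D^j}$ with $C^j_{D^i}$. It then concludes by naturality of the cup product, since $H^2(F_2)=0$. No Hopf formula or universal coefficients are needed.

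You dualize instead. $H_2=R/[F,R]$ is spanned by relator classes. Each relator is a commutator $[a,b]$ of two elements that commute in the group, so its Hopf class is the image of a torus fundamental class. Hence $\langle\alpha\cup\beta,[a,b]\rangle=\pm(\alpha(a)\beta(b)-\alpha(b)\beta(a))$.

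This removes the obstacle you flag for iii). There is no expansion ``modulo $[F,R]$'' to justify; that phrasing is off anyway, since $[C^u_A,C^v_A]$ and $[C^u_B,C^v_A]$ are not in $R$. Just use $\gamma(C^u_AC^u_B)=\gamma(C^u_A)+\gamma(C^u_B)$. Also note that a nonzero term forces $\{u,v\}=\{v_i,v_j\}$. Only then do ``shared'' and ``dominant'' refer to the same pair of vertices, and that is what makes your case check valid.

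For the last step, $H^2\cong\mathrm{Hom}(H_2,\Z)$ follows because $H_1$ is free, so $\mathrm{Ext}(H_1,\Z)=0$. You already established that $H_1$ is free, so Theorem B is not needed here.

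The paper's route is shorter for this single relation. Yours is more systematic: it identifies the kernel of $\Lambda^2H^1\to H^2$ with the annihilator of the relator images in $\Lambda^2H_1$. The same computation therefore gives relation (2) without a second ad hoc map to $F_2$. A check block by block over pairs of vertices shows this annihilator is spanned exactly by relations (3) and (4). Combined with generation in degree 1, that recovers $H^2(\PAut(A_\G))\cong R_2$ without the explicit comparison of bases.
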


\begin{proof}
    We will prove the relation through an epimorphism from $\PAut(A_\G)$ to $F_2$, the free group generated by $\{x,y\}$. We define 
    $$
\varphi_1: \PAut(A_\G) \rightarrow F_2
    $$
    induced by 
    $$
    C_{D^j}^i\mapsto x, \quad
   C_{D^i}^j\mapsto y, \quad
    \alpha\mapsto1 \textrm{ for } \alpha \textrm{ any other generator. } 
    $$

 \noindent Taking into account the presentation of $\PAut(\A)$ given in Theorem \ref{teo:presentation}, we see that there is no possible commutation between $C_{D^j}^i$ and $C_{D^i}^j$ so this map extends to a well defined group homomorphism. The map $\varphi_1$ induces  a homomorphism $\overline{\varphi}_1^*$ between the first cohomology  groups:
$$
\overline{\varphi}_1^*:H^1(F_2) \rightarrow H^1(\PAut(A_\G)).
$$

\noindent With a suitable order in the generators of $\PAut(A_\G)$, $\overline{\varphi}_1$ can be represented by the following matrix:
$$
\begin{pmatrix}
1 & 0 & 0 & \cdots & 0 \\
0 & 1 & 0 & \cdots & 0
\end{pmatrix}
$$

\noindent so, dualizing, $\overline{\varphi}_1^*$ corresponds to the transpose,
which means $\overline{\varphi}_1^*(x^*)=\gamma_{D^j}^i$ and $\overline{\varphi}_1^*(y^*)=\gamma_{D^i}^j$.
In $H^1(F_2)$, we have that $x^*\wedge y^*=0$; and the naturality of the cup product with respect to group homomorphisms implies
$$
\varphi_1^*(x^*\wedge y^*)=\varphi_1^*(x^*)\wedge\varphi_1^*(y^*)=\gamma_{D^j}^i\wedge \gamma_{D^i}^j=0.
$$
\end{proof}

\begin{prop}
   With the notation established before, relation $(2)$ also holds in $H^*(\PAut(A_\G))$. \label{prop:rel2}
\end{prop}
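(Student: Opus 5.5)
Relation (2) will be obtained, like relation (1) in Proposition \ref{prop:rel1}, by pulling it back along a homomorphism from $\PAut(A_\G)$ to a small group whose cohomology ring we know. The natural target is $\PAut(F_3)$, or more precisely the subgroup of $\PAut(F_3)$ generated by conjugations between two letters. In $F_3=\langle x_1,x_2,x_3\rangle$ the letter $x_3$ plays the role of the shared component $\C$. There the Brownstein--Lee relation
$$\gamma^{1}_{3}\wedge\gamma^{2}_{3}=\gamma^{1}_{3}\wedge\gamma^{2}_{1}+\gamma^{1}_{2}\wedge\gamma^{2}_{3}$$
is known to hold by \cite{JMcCM}; here $\gamma^a_b$ is dual to the conjugation of $x_b$ by $x_a$. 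A cleaner alternative is the rank-two quotient that only sees $v_i,v_j$: send $C^i_\C\mapsto a$, $C^i_{D^j}\mapsto b$, $C^j_\C\mapsto c$, $C^j_{D^i}\mapsto d$, and every other partial conjugation to $1$. The target $G$ is presented by $a,b,c,d$ modulo the single relation $[ab,c]=1$ (and its symmetric counterpart $[cd,a]=1$, if needed), which is exactly relation iii) of Theorem \ref{teo:presentation} for the pair $v_i,v_j$ and the shared component $\C$. This group $G$ is just $\PAut(F_3)$ restricted to the generators conjugating by $x_1,x_2$, so it suffices to do the computation there.

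\textbf{Step 1: the homomorphism is well defined.} I check each relation of Theorem \ref{teo:presentation} against the map $\varphi_2\colon\PAut(A_\G)\to G$.
\begin{itemize}
\item Relations of type i) and ii) involving any killed generator become trivial.
\item Among the four surviving generators, the only relations of type i)--ii) are commutations between subordinate or distinct shared components, so none survive beyond the ones already imposed.
\item Type iii) relations for the pair $(v_i,v_j)$ with $\C$ give exactly $[ab,c]=1$ and $[cd,a]=1$. Those with another shared component $\C'$ map to $[b,1]=1$ or $[1\cdot b,\,1]=1$. Those for other pairs map to identities, because at most one factor survives.
\end{itemize}
The delicate point is that $ab$ corresponds to the product over $\C$ and the dominant component only. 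Other petals of $\G-\st(v_i)$ may also appear in the real relation iii) (there $B$ is dominant), but those extra petals are killed, so the relation is still respected.

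\textbf{Step 2: compute the cohomology of $G$.} $G$ is a one-relator (or two-relator) group of the same shape as the subgroup $\langle C^1_3,C^1_2,C^2_3,C^2_1\rangle$ of $\PAut(F_3)$. Its $H^1$ is $\Z^4$ with dual basis $a^*,b^*,c^*,d^*$. Its $H^2$ is detected by the relators, since a presentation 2-complex that is aspherical (or at least whose second cohomology we can compute) suffices. For a commutator relator $[ab,c]$, the cup product on $H^1$ evaluated on the relator class pairs $(a^*+b^*)$ against $c^*$. This yields the relation $(a^*+b^*)\wedge c^*$ being dual to the relator $[ab,c]$. Similarly $[cd,a]$ gives $a^*\wedge(c^*+d^*)$. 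Using the fundamental-class pairing, the products $a^*\wedge d^*$, $b^*\wedge c^*$ and $b^*\wedge d^*$ vanish because no relator involves those letters pairwise. Together these give $a^*\wedge c^*=a^*\wedge d^*+b^*\wedge c^*$ up to sign conventions, which is precisely (2). Rather than redo this, I would invoke the Brownstein--Lee relation in $H^*(\PAut(F_3))$ from \cite{JMcCM} and the map $G\to\PAut(F_3)$ that realizes $G$ there.

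\textbf{Step 3: naturality.} As in Proposition \ref{prop:rel1}, the map $\varphi_2^*$ on $H^1$ is the transpose of a coordinate projection. It therefore sends $a^*,b^*,c^*,d^*$ to $\gamma^i_\C,\gamma^i_{D^j},\gamma^j_\C,\gamma^j_{D^i}$, and naturality of the cup product transports the relation. The main obstacle I expect is Step 2, namely justifying the exact signs and that the relation holds in $H^2(G)$ integrally. I would handle it by factoring $\varphi_2$ through $\PAut(F_3)$, with $v_i\mapsto x_1$, $v_j\mapsto x_2$, $\C\mapsto x_3$, and the dominant components mapping to the other $x$. Checking that this assignment respects Theorem \ref{teo:presentation} is routine, and then \cite{JMcCM} supplies the relation directly.
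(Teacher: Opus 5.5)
The route you ultimately rely on is sound, but the intermediate group $G$ and your Step 2 computation are wrong and should be discarded. The sound route is the direct map to $\PAut(F_3)$. Define $\psi\colon\PAut(A_\G)\to\PAut(F_3)$ by $C^i_\C\mapsto C^{x_1}_{\{x_3\}}$, $C^i_{D^j}\mapsto C^{x_1}_{\{x_2\}}$, $C^j_\C\mapsto C^{x_2}_{\{x_3\}}$, $C^j_{D^i}\mapsto C^{x_2}_{\{x_1\}}$, and send all other partial conjugations to $1$. This respects Theorem~\ref{teo:presentation}:
\begin{itemize}
\item relations i) either die or become commutations of partial conjugations by the same letter of $F_3$;
\item no relation ii) has both generators surviving;
\item the only relations iii) not killed outright are $[C^i_\C C^i_{D^j},C^j_\C]$ and $[C^j_\C C^j_{D^i},C^i_\C]$, which go to instances of relation iii) for $F_3$.
\end{itemize}
So $\psi^*$ sends the four relevant degree-one classes to $\gamma^i_\C,\gamma^i_{D^j},\gamma^j_\C,\gamma^j_{D^i}$, and naturality transports the relation from \cite{JMcCM}. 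You do have to match their index convention (which index is the conjugator). Reversing the direction of every conjugation only negates all degree-one classes, which leaves quadratic relations unchanged.

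The errors in the intermediate material are these.
\begin{itemize}
\item $G=\langle a,b,c,d\mid [ab,c],[cd,a]\rangle$ omits $[a,b]=[c,d]=1$. Relation i) includes the case $u=v$, so $C^i_\C$ and $C^i_{D^j}$ commute in $\PAut(A_\G)$. But $[a,b]\neq 1$ in $G$: killing $c,d$ leaves the free group on $a,b$. So your Step 1 claim that no type i) relation survives is false, $\varphi_2$ into $G$ is not well defined, and $G$ is not the subgroup of $\PAut(F_3)$ you identify it with.
\item In Step 2, $a^*\wedge d^*$ and $b^*\wedge c^*$ do not vanish, since $[cd,a]$ pairs $d$ with $a$ and $[ab,c]$ pairs $b$ with $c$. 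If they did vanish, you would get $a^*\wedge c^*=0$ rather than (2).
\end{itemize}
Once $[a,b]$ and $[c,d]$ are added, the computation can be repaired. Evaluate on the relator classes $r_3=[ab,c]$, $r_4=[cd,a]$ of the presentation complex; $H^2$ of the group injects into $H^2$ of this complex. Up to a global sign, this gives $a^*c^*=r_3^*-r_4^*$, $a^*d^*=-r_4^*$, $b^*c^*=r_3^*$ and $b^*d^*=0$, which is exactly (1) and (2).

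The paper's proof takes a different and more economical route. It maps to $F_2$ via $C^i_{D^j}\mapsto x$, $C^i_\C\mapsto x^{-1}$, $C^j_{D^i}\mapsto y$, $C^j_\C\mapsto y^{-1}$. The products $C^i_\C C^i_{D^j}$ and $C^j_\C C^j_{D^i}$ occurring in relation iii) then map to $1$, so every relation holds trivially. Since $H^2(F_2)=0$, this forces $(\gamma^i_{D^j}-\gamma^i_\C)\wedge(\gamma^j_{D^i}-\gamma^j_\C)=0$, and expanding with relation (1) gives (2).

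Your approach explains (2) conceptually as the pullback of the free-group Brownstein--Lee relation, and it yields (1) at the same time. However, it depends on the nontrivial computation in \cite{JMcCM}. The paper's argument is self-contained: it needs only that free groups have trivial $H^2$, plus the already-established relation (1).
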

\begin{proof}
    We argue as in Proposition \ref{prop:rel1}. We define 
    $$
\varphi_2: \PAut(A_\G) \rightarrow F_2
    $$
    induced by 
    $$
    C_{D^j}^i\mapsto x, \quad
   C_{D^i}^j\mapsto y, \quad
   C_{\mathcal{C}}^i\mapsto x^{-1}, \quad
   C_{\mathcal{C}}^j\mapsto y^{-1}, \quad
    \alpha\mapsto1 \textrm{ for } \alpha \textrm{ any other generator. } 
    $$

\noindent We will  verify again that this extends to a well defined group homomorphism. Taking into account  the presentation given in Theorem \ref{teo:presentation}, we see that the only relations that involve the preimages of $x^{\pm1}$ and $ y^{\pm1}$ are $[C_{\mathcal{C}}^iC_{D^j}^i, C_{\C}^j]=1$ and $[C_{\mathcal{C}}^jC_{D^i}^j, C_{\mathcal{C}}^i]=1$. It is enough to check that $\varphi_2(C_{\mathcal{C}}^iC_{D^j}^i C_{\mathcal{C}}^j)=\varphi_2(C_{\mathcal{C}}^jC_{\mathcal{C}}^iC_{D^j}^i )$ (the other relation will follow by symmetry):
$$
\varphi_2(C_{\mathcal{C}}^iC_{D^j}^i C_{\mathcal{C}}^j)=x^{-1}xy^{-1}=y^{-1}=y^{-1}x^{-1}x=\varphi_2(C_{\mathcal{C}}^jC_{\mathcal{C}}^iC_{D^j}^i ).
$$

\noindent Again, with a suitable order in the generators of $\PAut(A_\G)$, $\overline{\varphi}_2$ can be represented by the following matrix:
$$
\begin{pmatrix}
1 & 0 & -1 & 0 & 0&\cdots & 0 \\
0 & 1 & 0 & -1 & 0&\cdots & 0
\end{pmatrix}
$$
so  $\overline{\varphi}_2^*$, the induced  homomorphism between the first cohomology groups, corresponds to the transpose, and we get
$$\overline{\varphi}_2^*(x^*)=\gamma_{D^j}^i-\gamma_\C^i \ \ \textrm{ and }  \ \ \overline{\varphi}_2^*(y^*)=\gamma_{D^i}^j-\gamma_\C^j.$$

\noindent By the properties of the cup product  
$$
\overline{\varphi}_2^*(x^*\wedge y^*)=\overline{\varphi}_2^*(x^*)\wedge\overline{\varphi}_2^*(y^*)=(\gamma_{D^j}^i-\gamma_\C^i)\wedge (\gamma_{D^i}^j-\gamma_\C^j)=0
$$
and reordering and applying Proposition \ref{prop:rel1}, we obtain 
$$\gamma_{\mathcal{C}}^i\wedge \gamma_{\mathcal{C}}^j=\gamma_{\C}^i\wedge \gamma_{D^i}^j+ \gamma_{D^j}^i\wedge\gamma_{\mathcal{C}}^j.$$
\end{proof}

\begin{defi}\label{def:ringR}
    We define $R$ as the graded commutative ring generated by all the $\gamma_A^i$, where $1\leq i\leq n$ and $A$ ranges over the connected components in $\G-\st(v_i)$, with the relations 
    \begin{itemize}
        \item[(1)] $\gamma_A^i\wedge\gamma_A^i=0$
        \item[(2)] $\gamma_A^i\wedge\gamma_B^j=-\gamma_B^j\wedge\gamma_A^i$
        \item[(3)] $\gamma_{D^j}^i\wedge \gamma_{D^i}^j=0$ 
        \item[(4)] $\gamma_{\mathcal{C}}^i\wedge \gamma_{\mathcal{C}}^j=\gamma_{\mathcal{C}}^i\wedge \gamma_{D^i}^j+ \gamma_{D^j}^i\wedge\gamma_{\mathcal{C}}^j$ for each shared component $\C$ of $v_i,v_j$ .
    \end{itemize}
    
We will denote by $R_q$ the term of degree $q$ in R.
\end{defi}

By Propositions \ref{prop:rel1} and \ref{prop:rel2}, we have:

\begin{cor}
    Let $R$ be the ring of Definition \ref{def:ringR}. Then there exists a ring epimorphism
    $$
    \phi:  R  \twoheadrightarrow H^*(\PAut(A_\G)).
    $$

  so
$$
\textrm{dim}(R_q)\geq \textrm{dim}(H^q(\PAut(A_\G)).
    $$    
    \label{cor:H2_1}
\end{cor}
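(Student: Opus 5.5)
The plan is to build $\phi$ on generators and then check surjectivity and the dimension inequality. Let $\mathcal{F}$ be the free graded-commutative ring (exterior algebra over $\mathbb{Z}$) on symbols $\gamma_A^i$, where $1\le i\le n$ and $A$ ranges over the connected components of $\G-\st(v_i)$. Define a ring map $\Phi:\mathcal{F}\to H^*(\PAut(A_\G))$ by sending each symbol to the dual class $[C_A^i]^*\in H^1(\PAut(A_\G))$. This makes sense because, by Theorem \ref{teo:conmu}, the partial conjugations $C_A^i$ with $A$ a single component form a generating set. Every relation in the presentation of Theorem \ref{teo:conmu} is a commutator, so $H_1(\PAut(A_\G))$ is free abelian on the classes $[C_A^i]$. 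Hence $H^1(\PAut(A_\G))$ is free abelian on the dual classes $\gamma_A^i$, and these classes are well defined.

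Next we check that $\Phi$ factors through $R$. Relations (1) and (2) of Definition \ref{def:ringR} hold because cup product is graded commutative and degree-one classes square to zero with integer coefficients: $\gamma\wedge\gamma=-\gamma\wedge\gamma$, and in the cohomology of a group whose $H_1$ and $H_2$ are torsion-free the square of a degree-one class vanishes. This can be seen by restricting to $\mathbb{Z}$ via a homomorphism, or by using that $x\wedge x$ is the Bockstein-related class, which is zero here. Relation (3) is Proposition \ref{prop:rel1}, and relation (4) is Proposition \ref{prop:rel2}. So $\Phi$ induces a graded ring homomorphism $\phi:R\to H^*(\PAut(A_\G))$.

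For surjectivity, use Corollary \ref{cor:onedimPAut}: $H^*(\PAut(A_\G))$ is generated as a ring by $H^1$. Since $\phi$ maps $R_1$ onto the basis $\{\gamma_A^i\}$ of $H^1(\PAut(A_\G))$, its image is a subring containing $H^1$, and therefore it is everything. The degreewise inequality $\dim R_q\ge \dim H^q(\PAut(A_\G))$ (rank of the free abelian groups, or dimension after tensoring with $\mathbb{Q}$) then follows from surjectivity of $\phi$ in degree $q$.

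The main obstacle is the degree-one identification: we need $H^1(\PAut(A_\G))$ to have exactly the $\gamma_A^i$ as a basis, which requires the abelianization to be free on the partial conjugations indexed by single components. Theorem \ref{teo:conmu} gives this directly, since relation iii), $[C_A^uC_B^u,C_A^v]$, is still a commutator and so dies in the abelianization. A secondary subtlety is the vanishing of $\gamma\wedge\gamma$ integrally. I would handle it by noting that $\gamma_A^i=f^*(t)$ for the homomorphism $f:\PAut(A_\G)\to\mathbb{Z}$ sending $C_A^i\mapsto 1$ and all other generators to $0$; this is well defined because all relations are commutators. Then $\gamma\wedge\gamma=f^*(t\wedge t)=0$, since $H^2(\mathbb{Z})=0$.
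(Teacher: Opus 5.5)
Your proposal is correct and follows the same route as the paper, which simply cites Propositions \ref{prop:rel1} and \ref{prop:rel2} for relations (3) and (4) and relies on generation in degree 1 (Corollary \ref{cor:onedimPAut}) for surjectivity. You also make explicit two points the paper leaves implicit: $H^1(\PAut(A_\G))$ is free on the $\gamma_A^i$ because every Koban--Piggott relation is a commutator, and $\gamma\wedge\gamma=0$ integrally because each degree-one class is pulled back from $H^2(\mathbb{Z})=0$ (this argument works for any group, so your torsion-freeness/Bockstein aside is unnecessary).
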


We want to prove now that $\phi$ is an isomorphism in degree 2. To do that,  we will  construct an epimorphism from the standard $\mathbb{Z}$-basis of
\begin{equation}
     H^2(\PAut(A_\G))\cong \bigoplus_{i+j=2} \mathbb{Z}^{K_i\cdot N_j}\cong \mathbb{Z}^{K_2\cdot N_0}\oplus\mathbb{Z}^{K_1\cdot N_1}\oplus\mathbb{Z}^{K_0\cdot N_2}
\end{equation}
to a $\mathbb{Z}$-basis of $R_2$. In the formula, we have $N_0=1$, and $N_1$, $N_2$ are the number of vertices and edges in $\G$ respectively.

\begin{defi}
    We will denote by $\B_1$  the standard $\mathbb{Z}$-basis of $H^2(\PAut(\A))$, which is bijective to the union of the following sets:
    \begin{itemize}
        \item Type 1 elements: the set of all rank 2 essential vertex types.
        \item Type 2 elements:  the set of tall pairs $(\tau,v_j)$ where $\tau$ is a rank 1 essential vertex and $v_j$ a vertex in $\G$.
        \item Type 3 elements: the set of all edges in $\G$. 
    \end{itemize}
\end{defi}

\begin{defi}
    Let $\B_2$ be the family of elements the  of $R_2$ consisting of those $\gamma_A^i\gamma_B^j$ such that $A\subset \G-\st(v_i), B\subset\G-\st(v_j)$ are connected components,  if $v_i=v_j$ then $A\neq B$ and when $v_i\notin\st(v_j)$
    \begin{itemize}
        \item $A,B$ cannot be shared and equal.
        \item $A,B$ cannot be both the respective dominant components.
    \end{itemize}
\end{defi}

\begin{lem}
    $\B_2$ is a $\mathbb{Z}$-basis for $R_2$.
\end{lem}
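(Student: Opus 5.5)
The plan is to treat $R_2$ as a quotient of the degree-$2$ part of a free exterior algebra, and to read the relations (3) and (4) as rewriting rules. Let $E$ be the free abelian group on all generators $\gamma_A^i$, with $i$ running over the vertices and $A$ over the connected components of $\G-\st(v_i)$. Relations (1) and (2) make the degree-$2$ part of $R$ a quotient of $\bigwedge^2 E$. This group is free abelian with basis the monomials $\gamma_A^i\gamma_B^j$, taken for pairs $\{(i,A),(j,B)\}$ of distinct generators, each in a fixed order.

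The remaining relations are indexed by pairs of non-adjacent vertices $v_i,v_j$. Relation (3) gives one relation per such pair, and relation (4) gives one for each shared component $\C$ of the pair. We therefore get
$$R_2=\textstyle\bigwedge^2E\big/ N,$$
where $N$ is spanned by the elements $\gamma_{D^j}^i\gamma_{D^i}^j$ and by the elements
$$\gamma_{\C}^i\gamma_{\C}^j-\gamma_{\C}^i\gamma_{D^i}^j-\gamma_{D^j}^i\gamma_{\C}^j.$$

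The key observation is that each generator of $N$ has a distinguished \emph{leading monomial} that appears in it with coefficient $\pm1$:
\begin{itemize}
\item for relation (3) the leading monomial is $\gamma_{D^j}^i\gamma_{D^i}^j$, the product of the two dominant components;
\item for relation (4) it is $\gamma_{\C}^i\gamma_{\C}^j$, the shared component repeated.
\end{itemize}
In both cases the leading monomials are pairwise distinct. A leading monomial determines the unordered pair $\{v_i,v_j\}$, and it also determines $\C$ in the second case. Moreover, the two types never coincide, because a shared component is never a dominant component.

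The other monomials in relation (4) are $\gamma_{\C}^i\gamma_{D^i}^j$ and $\gamma_{D^j}^i\gamma_{\C}^j$. Each of these has exactly one dominant factor and one shared factor, so neither is a leading monomial of any relation. I must check that neither monomial is excluded for a different pair of vertices. This holds because a monomial $\gamma_A^i\gamma_B^j$ carries its vertices $i,j$, and the exclusion conditions refer only to that same pair. Hence they are precisely elements of $\B_2$.

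Consequently the generators of $N$ form a unitriangular system relative to the basis of $\bigwedge^2E$. They are linearly independent, and $N$ is a direct summand with complement spanned by the non-leading monomials. These non-leading monomials are exactly $\B_2$: pairs with $A\neq B$ when $v_i=v_j$, and, for non-adjacent $v_i,v_j$, all pairs except the shared-equal and the dominant-dominant ones.

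The argument is then concrete. Reducing modulo $N$, each leading monomial is rewritten as a $\Z$-combination of elements of $\B_2$, so $\B_2$ spans $R_2$. For independence, suppose $\sum c_m m\in N$ with every $m\in\B_2$, and write it as $\sum a_r r$ over the relations $r$. The coefficient of each leading monomial must vanish, which forces every $a_r=0$, and hence every $c_m=0$.

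The main obstacle is the bookkeeping behind the claim that leading monomials never reappear with nonzero coefficient inside another relation. Relation (3) for a pair $\{i,j\}$ involves only the $i,j$ generators. Relation (4) for $(i,j,\C)$ involves only $\C$, $D^i$ and $D^j$. So I only need to rule out a shared component coinciding with a dominant component for the same pair. This follows from Definition \ref{defi:comp}: $D^j$ contains $v_j$, whereas a shared component avoids $\st(v_j)$.

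I will also note that adjacent pairs and equal-vertex pairs carry no extra relations beyond (1) and (2). This matches the definition of $\B_2$, which imposes no exclusions in those cases.
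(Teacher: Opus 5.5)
Your proposal is correct and follows the paper's route. Spanning comes from rewriting the excluded monomials via relations (1), (3) and (4), and independence comes from the fact that no further relations act on $\B_2$. Your check that each excluded monomial occurs, with coefficient $\pm1$, in exactly one generator of $N$ makes $N$ a direct summand of $\bigwedge^2 E$ complemented by the span of $\B_2$; this supplies the justification behind the paper's phrase ``clearly linearly independent''.
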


\begin{proof}
    $\B_2$ generates all $R_2$, since by relation $(1)$ $ \gamma_A^i\gamma_A^i=0$, by relation $(3)$ $\gamma_A^i\gamma_B^j=0$ if $A,B$ are both the respective dominant components, and if $A=B=\C$ then by relation $(4)$ $\gamma_\C^i\gamma_C^j$ can be obtained as a $\mathbb{Z}$-linear combination of $\gamma_\C^i\gamma_{D^i}^j$ and $\gamma_{D^j}^i\gamma_C^j$. $\B_2$ is clearly linearly independent since there are no other relations left to make a linear combination zero if the coefficients are not all zero.
\end{proof}

\begin{nota}
    If we have two compatible based partitions $\tau_i,\tau_j$ and a petal $P\in\tau_i$ does not contain the dominant component $D^j$, then $P$ can always be split if it is formed by more than one connected component, and the new based partition $\tau_i'$ will still be compatible with $\tau_j$. Recall as well that if $\G-\st(v_i)$ and $\G-\st(v_j)$ have different minimal elements, then one of the minimal elements must lie in the respective dominant component.
\end{nota}

We will state two technical lemmas.

\begin{lem}\label{lem:minimal-component}
    Let $v_i\notin\st(v_j)$ and let $\min_i$ denote the minimal element in $\G-\st(v_i)$. Let $\mathcal{S}$ denote a subordinate component.  If $\min_i\neq\min_j$ then either $\min_i\in D^j$, or $\min_j\in D^i$ or both. If  $\min_i=\min_j$ then either $\min_i\in D^j$ and $\min_j\in \mathcal{S}\subset D^j$,  or $\min_j\in D^i$ and $\min_i\in \mathcal{S}\subset D^i$,  or $\min_i$ and $\min_j$ are in the same shared component $\C$.
\end{lem}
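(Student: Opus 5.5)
The argument is a case analysis on where $\min_i$ and $\min_j$ sit, using only the classification of components in Definition \ref{defi:comp} (Lemma 2.1 of \cite{DayWade}): every component of $\G-\st(v_i)$ is dominant, shared or subordinate. Two further facts will be used. First, $\G-\st(v_i)$ is the disjoint union of $D^j$, the shared components, and the subordinate components, and the subordinate ones lie inside $D^i$ (the dominant component of $\G-\st(v_j)$). Second, a shared component meets neither $\st(v_i)$ nor $\st(v_j)$.

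\textbf{Case $\min_i\neq\min_j$.} Assume without loss of generality that $\min_i<\min_j$. Since $\min_j$ is the smallest label in $\G-\st(v_j)$, the vertex $\min_i$ does not lie in $\G-\st(v_j)$, so $\min_i\in\st(v_j)$. It cannot be a shared component vertex, because shared components avoid $\st(v_j)$. It cannot be in a subordinate component of $\G-\st(v_i)$ either, since those lie in $D^i\subseteq\G-\st(v_j)$. Hence $\min_i\in D^j$; this is exactly the observation used in case (ii) of Lemma \ref{lem:DomWorri}. Symmetrically, if $\min_j<\min_i$ then $\min_j\in D^i$. This gives the first claim.

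\textbf{Case $\min_i=\min_j=m$.} Here $m\in(\G-\st(v_i))\cap(\G-\st(v_j))$. Let $A$ be the component of $\G-\st(v_i)$ containing $m$, and split by its type.
\begin{itemize}
\item If $A$ is shared, then $A$ is also the component of $\G-\st(v_j)$ containing $m$, so $m$ lies in the same shared component $\C$ for both vertices.
\item If $A=D^j$, then $m\in D^j$. Since $m\in\G-\st(v_j)$, its component $B$ in $\G-\st(v_j)$ is not dominant, because the dominant component $D^i$ contains $v_i$ and lies in $\G-\st(v_i)$, whereas $m$ is in $D^j$, disjoint from $D^i$. If $B$ were shared, it would also be a component of $\G-\st(v_i)$ containing $m$, so $B=A=D^j$; but $D^j$ contains $v_j\notin\G-\st(v_j)$, a contradiction. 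So $B=\mathcal{S}$ is subordinate, i.e.\ $\mathcal{S}\subset D^j$.
\item If $A$ is subordinate, then $A\subset D^i$, and the symmetric argument shows $m\in D^i$ with the component of $\G-\st(v_j)$ being $D^i$, giving the mirror alternative.
\end{itemize}
One detail in the subordinate subcase needs checking: that the component of $\G-\st(v_j)$ containing $m$ is really $D^i$. Since $A\subset D^i$, $m\in D^i$ directly.

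\textbf{Expected obstacle.} The only delicate step is the disjointness bookkeeping in the equal-minimum case: showing that $D^j$ cannot coincide with a shared component, and that a component lying inside the other vertex's dominant component is by definition subordinate. I would make explicit that $D^j\ni v_j$ prevents $D^j$ from being a component of $\G-\st(v_j)$. Everything else is a direct unwinding of the definitions.
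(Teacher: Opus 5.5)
The gap is in the equal-minimum case. Your argument for $\min_i\neq\min_j$ is correct; you reach $\min_i\in D^j$ through the trichotomy, whereas the paper only notes that $\min_i\in\st(v_j)$. The shared and subordinate subcases when $\min_i=\min_j=m$ are also correct, as is your exclusion of $B$ being shared when $A=D^j$.

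The failing step is your exclusion of $B=D^i$ in the subcase $A=D^j$. You assert that $D^i$ lies in $\G-\st(v_i)$ and that $D^j\cap D^i=\emptyset$. Both claims are false: $D^i$ contains $v_i$ by definition, and the two dominant components can overlap. Take $\G$ to be the path with consecutive vertices $v_i,a,b,c,v_j$ (it has no dominating vertex), and give $b$ the smallest label. Then $\G-\st(v_i)=\{b,c,v_j\}=D^j$ and $\G-\st(v_j)=\{v_i,a,b\}=D^i$. So $\min_i=\min_j=b\in D^j\cap D^i$, and there are no shared or subordinate components at all.

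This step cannot be repaired. In the example none of the three alternatives of the equal-minimum clause holds, so the statement itself needs a fourth alternative, $m\in D^j\cap D^i$. Once that step is corrected, your case analysis actually proves the following: if $\min_i=\min_j=m$, then $m$ lies in one of
\begin{itemize}
\item a common shared component;
\item $D^j$ and a subordinate component of $\G-\st(v_j)$;
\item $D^i$ and a subordinate component of $\G-\st(v_i)$;
\item both dominant components.
\end{itemize}

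The paper's own proof has the same hole. The ``fact'' it invokes says that intersecting components of $\G-\st(v_i)$ and $\G-\st(v_j)$ are either the same shared component or a subordinate one inside the other vertex's dominant component. That fact omits exactly the case $A=D^j$, $B=D^i$, which the example above realizes.
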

\begin{proof}
    Assume that $\min_i\neq\min_j$. Then, one of them, say $\min_j$, is smaller. Therefore $\min_j\notin(\G-\st(v_i))$, so $\min_j\in\st(v_i)$ and $\min_j\in D^i$. The second claim follows  from the next fact: if $A,B$ are connected components in $\G-\st(v_i)$ and    $\G-\st(v_j)$ respectively, and $A\cap B\neq\emptyset$, then either $A=B$ and they are both the same shared component or $A\subset B$ and $A=\mathcal{S}$ and $B=D^i$.
    
\end{proof}

\begin{lem}\label{lem:rank2Partitions}
    Let $\tau$ be a rank 2 essential vertex type in $\Wh_\G$ with two non trivial based partitions $\tau_i=\{\{v_i\}, P_1, P_2\}$ and $\tau_j=\{\{v_j\}, Q_1, Q_2\}$. Assume that $\min_i\in P_1$ and $\min_j\in Q_1$. Then either $P_2$ and $Q_2$ both consist of a unique connected component, or $P_2=\C$ is a shared component and $Q_2=D^i\cup\C$ (or the other way around).
\end{lem}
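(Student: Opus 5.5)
Since $\tau$ is essential, neither $P_2$ nor $Q_2$ is worrisome: neither contains the minimal element. So each of them either is a single connected component, or it cannot be split because splitting it would create a crossing with another based partition of $\tau$. The plan is to analyse this second alternative. All based partitions other than $\tau_i,\tau_j$ are trivial. A trivial partition has a single petal, and that petal contains everything, including the dominant component. So no crossing can ever occur with a trivial partition. The only obstruction to splitting $P_2$ therefore comes from $\tau_j$, and symmetrically the only obstruction for $Q_2$ comes from $\tau_i$.

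\textbf{Adjacent case.} If $v_i\in\st(v_j)$, any two based partitions at $v_i$ and $v_j$ are compatible, so there is no obstruction at all. Hence $P_2$ and $Q_2$ must each consist of a unique connected component, as claimed.

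\textbf{Non-adjacent case: which petal can be blocked.} Assume $v_i\notin\st(v_j)$. By the Remark preceding Lemma \ref{lem:minimal-component}, a petal of $\tau_i$ not containing $D^j$ can always be split without creating a crossing with $\tau_j$. So if $P_2$ has more than one component, we must have $D^j\subseteq P_2$. Likewise, if $Q_2$ has more than one component, then $D^i\subseteq Q_2$. By Lemma \ref{lem:DomWorri} (or directly by relation (3)-type compatibility reasoning), $P_2$ and $Q_2$ cannot both be non-splittable petals containing the respective dominant components. So suppose, say, $Q_2$ has several components and $D^i\subseteq Q_2$. I would then show that $P_2$ is a single component and identify it.

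\textbf{Non-adjacent case: identifying $P_2$ and $Q_2$.} Splitting $Q_2$ is blocked only if some split of $Q_2$ produces a crossing with $\tau_i$. A crossing means a shared component $\C$ lies in a petal of the new $\tau_j$ not containing $D^i$, and also in a petal of $\tau_i$ not containing $D^j$. Since $D^j\subseteq P_1$ or $D^j\subseteq P_2$ (the latter excluded above), $\C$ must lie in $P_2$, and $\C\subseteq Q_2$. Because $\tau$ itself has no crossing, $P_2\cap Q_2$ can contain no shared component unless $D^i\subseteq Q_2$ or $D^j\subseteq P_2$; here the first holds. The key step, which I expect to be the main obstacle, is to show that in fact $P_2=\C$ and $Q_2=D^i\cup\C$. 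The argument has three parts.
\begin{itemize}
\item[(a)] If $Q_2$ contained any component other than $D^i$ and $\C$, either a subordinate component or a shared component not in $P_2$, we could split it off from $Q_2$ without crossing. That would be an admissible split of a worrisome petal, a contradiction.
\item[(b)] If $Q_2$ contained two shared components of $P_2$, we could split off one of them as $\{\C'\}$ together with $D^i$, and again no crossing would appear. To make this work I would use Lemma \ref{lem:minimal-component} to control where $\min_j$ sits and to ensure it does not lie in $Q_2$.
\item[(c)] Since $P_2$ cannot be splittable, it must be a single component. As it contains $\C$, we get $P_2=\C$.
\end{itemize}
The delicate point is checking these splits carefully against the definition of crossing. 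The remaining case, where $P_2$ is the non-singleton petal, is symmetric and gives the "other way around" alternative.
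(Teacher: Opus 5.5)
Your outline follows the paper's proof: the adjacent case is trivial; in the non-adjacent case one of $P_2,Q_2$ misses its dominant component and is therefore a single component; and unsplittability then pins down the other. Two steps, however, do not work as written.

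\textbf{The mis-cited lemma.} Lemma \ref{lem:DomWorri} cannot be used to exclude $D^j\subseteq P_2$ together with $D^i\subseteq Q_2$. It concerns an \emph{inessential} vertex type and only says that the two dominant-containing petals are not both worrisome. Here $P_2$ and $Q_2$ are unsplittable, hence not worrisome, so the lemma gives nothing. Relation (3) is a ring relation and has no bearing on compatibility either. What you need is the paper's claim that $D^j\subseteq P_1$ or $D^i\subseteq Q_1$, which is proved directly. By Lemma \ref{lem:minimal-component}, one of three things holds:
\begin{enumerate}
\item $\min_i\in D^j$, so $D^j\subseteq P_1$;
\item $\min_j\in D^i$, so $D^i\subseteq Q_1$;
\item $\min_i=\min_j$ lies in a shared component.
\end{enumerate}
In the third case $P_1\cap Q_1\neq\emptyset$, and compatibility of $\tau_i,\tau_j$ forces one of $P_1,Q_1$ to contain its dominant component.

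\textbf{Step (b) is false.} Suppose $\C,\C'\subseteq P_2\cap Q_2$ are shared and $D^j\subseteq P_1$. Any split of $Q_2$ that separates $\C$ from $\C'$ puts one of them in a piece not containing $D^i$. That piece meets $P_2\nsupseteq D^j$, which is a crossing. So the split you propose is not admissible. If $Q_2=D^i\cup\C\cup\C'$, no split of $Q_2$ is admissible at all, so unsplittability of $Q_2$ cannot rule out this configuration. The location of $\min_j$ plays no role; it lies in $Q_1$ by hypothesis anyway.

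The configuration is excluded by $P_2$ instead. Since $D^j\subseteq P_1$ and $P_2$ is unsplittable, $P_2$ is a single component, which is your (c). So drop (b) and finish as the paper does. By (a), every component of $Q_2$ other than $D^i$ is a shared component contained in $P_2$. Since $P_2$ is a single component containing $\C$, you get $P_2=\C$ and $Q_2=D^i\cup\C$.
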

\begin{proof}
    Since $\tau$ is essential, the only petals admitting splits are those containing the minimal elements, so neither $P_2$ nor $Q_2$ admit a split. If $v_i\in\lk(v_j)$ then based partitions are always compatible, hence, since $P_2$ and $Q_2$ do not admit any split they must consist of a unique connected component. If $v_i\notin\lk(v_j)$, we claim that at least one of the $P_1,Q_1$ contains the respective dominant component: if the minimal elements are different we apply the previous lemma; if the minimal element is the same in both $\tau_i$ and $\tau_j$, we  have  $P_1\cap Q_1\neq\emptyset$, so at least one of $P_1, Q_1$ contains the dominant component, because in other case we would have a crossing and the partitions would not be compatible. Therefore, we can assume that $D^j\subseteq P_1$. Since $P_2$ does not contain the dominant component, and it does not admit a split, then it must consist of a unique connected component. If $Q_2$ is formed by a unique connected component we are done. If not, $Q_2=\{D^i\cup \mathcal{C}_1\cup\cdots \cup \mathcal{C}_k\}$, where each $\mathcal{C}_i$ is shared, because subordinate components do not produce crossings and can always be split. But these $\mathcal{C}_1,...,\mathcal{C}_k$ must be in $P_2$, to impede the split of $Q_2$. As a consequence,  since $P_2$ consists of a unique connected component, we have $P_2=\C$ and $Q_2=D^i\cup \C$.
\end{proof}

\begin{prop}
    There exists an surjective map   $\varphi: \B_1\rightarrow\B_2$. \label{prop:baseIsomor}
\end{prop}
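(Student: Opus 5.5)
We will define $\varphi$ type by type on $\B_1$ and check that every element $\gamma_A^i\gamma_B^j$ of $\B_2$ is hit. Throughout, a rank 1 essential vertex type $\tau$ has a single non-trivial based partition $\tau_i=\{\{v_i\},P_1,P_2\}$ with $\min_i\in P_1$. Essentiality forces $P_2$ to be a single connected component $A$ of $\G-\st(v_i)$: no other based partition is non-trivial, so nothing obstructs a split of $P_2$. Thus rank 1 essential vertex types correspond to pairs $(i,A)$ with $A$ a component of $\G-\st(v_i)$ not containing $\min_i$. We record this as $\tau\leftrightarrow\gamma^i_A$.

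\textbf{Type 3.} An edge $\{v_i,v_j\}$ goes to $\gamma^i_{M_i}\gamma^j_{M_j}$, where $M_i$ is the component of $\G-\st(v_i)$ containing $\min_i$. This is in $\B_2$ because for adjacent vertices there are no restrictions. Under the inclusion $A_\G\to\PAut$ of Lemma \ref{lem:tecnic}, $e^i$ corresponds to the sum of all $\gamma^i_A$, and this is the natural "missing" canonical class.

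\textbf{Type 2.} A pair $(\tau,v_j)$ with $\tau\leftrightarrow\gamma^i_A$ goes to $\gamma^i_A\gamma^j_{M_j}$ when this lies in $\B_2$. The exceptional cases are $i=j$ with $A=M_i$ (impossible, since $A\not\ni\min_i$), $A=M_j$ shared, and $A=D^j$, $M_j=D^i$. In those cases we redirect the image to the remaining uncovered products, namely the ones involving $M_i$ or $M_j$ against dominant components. For this bookkeeping we will use Lemma \ref{lem:minimal-component} to pin down where $\min_i,\min_j$ lie.

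\textbf{Type 1.} A rank 2 essential vertex type either has one non-trivial based partition $\{\{v_i\},P_1,A,B\}$, which we send to $\gamma^i_A\gamma^i_B$, or two, $\tau_i,\tau_j$, which we handle with Lemma \ref{lem:rank2Partitions}. If $P_2=A$ and $Q_2=B$ are single components, we send $\tau$ to $\gamma^i_A\gamma^j_B$. In the exceptional shape $P_2=\C$, $Q_2=D^i\cup\C$, we send $\tau$ to $\gamma^i_\C\gamma^j_{D^i}$.

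\textbf{Surjectivity.} We then take an arbitrary $\gamma_A^i\gamma_B^j\in\B_2$ and split into cases. If $v_i=v_j$ or $v_i,v_j$ are adjacent, the cases are sorted by how many of $A,B$ contain the minimal element: neither gives a Type 1 preimage (the partitions are compatible automatically), exactly one gives Type 2, and both gives Type 3.

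If $v_i,v_j$ are non-adjacent and neither $A\ni\min_i$ nor $B\ni\min_j$, we must show that the vertex type $\tau_i=\{\{v_i\},\G-\st(v_i)-A,A\}$, $\tau_j=\{\{v_j\},\G-\st(v_j)-B,B\}$ is compatible and essential. This is the main obstacle. A crossing would need a shared component lying in both $A$ and $B$ with neither big petal containing the dominant component, so it would force $A=B$ shared, which is excluded. Essentiality needs that $A,B$ cannot be split, which holds since they are single components. The case where $A$ or $B$ is dominant (or equal to $\C\cup D$) has to be matched with the exceptional shape of Lemma \ref{lem:rank2Partitions}. There I will check carefully that products like $\gamma^i_\C\gamma^j_{D^i}$ with $\min_j\in D^i$ are covered by Type 1 or by the redirected Type 2 elements. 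The remaining non-adjacent cases, in which one of $A,B$ contains the minimal element, are covered by Type 2. This case analysis via Lemma \ref{lem:minimal-component} is where I expect most of the work.
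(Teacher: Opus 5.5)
\textbf{Verdict: genuine gap.} You follow the paper's route, but the proposal stops exactly where the content of the proposition lies.

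Your Type 1 and Type 3 assignments, and your generic Type 2 assignment $(\tau,v_j)\mapsto\gamma^i_A\gamma^j_{M_j}$, are exactly the paper's cases 1.1, 1.2.1, 1.2.2, 3 and 2.3. Your two exceptional situations ($A=M_j$ shared; $A=D^j$ with $M_j=D^i$) are exactly its cases 2.1.1 and 2.1.2/2.2. However, you never define $\varphi$ on the exceptional Type 2 elements. You also never check surjectivity in the non-adjacent cases where the naive choices fail. A surjection between these finite sets amounts to $|\B_1|\geq|\B_2|$, so ``redirect to the remaining uncovered products'' presupposes precisely the count you have deferred.

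Two concrete points are missing. First, for non-adjacent $v_i,v_j$ there is no Type 3 element, so $\gamma^i_{M_i}\gamma^j_{M_j}$, when it lies in $\B_2$, must come from a redirected element.

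Second, your claim that a crossing ``would force $A=B$ shared'' holds only when neither factor is dominant. Take $A=D^j$ and $B=\C$ shared, neither containing its minimum. Then $\{\{v_i\},(D^j)^C,D^j\}$ and $\{\{v_j\},\C^C,\C\}$ cross through the big petal $(D^j)^C\supseteq\C$. When $\min_i\notin\C$, this product is hit by the vertex type $\tau_i=\{\{v_i\},(D^j\cup\C)^C,D^j\cup\C\}$, $\tau_j=\{\{v_j\},\C^C,\C\}$ of Lemma \ref{lem:rank2Partitions}. When $\min_i\in\C$ that shape is unavailable, and $\min_j\in D^i$ by Lemma \ref{lem:minimal-component}. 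The product must then come from a redirected Type 2 element.

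Write $(\gamma^i_A,v_j)$ for the Type 2 element $(\tau,v_j)$ with $\tau\leftrightarrow\gamma^i_A$. The paper's redirections are:
\begin{itemize}
\item $(\gamma^i_{\C},v_j)\mapsto\gamma^i_{D^j}\gamma^j_{\C}$ if $\min_j\in\C$ shared;
\item $(\gamma^i_{D^j},v_j)\mapsto\gamma^i_{D^j}\gamma^j_{\C}$ if $\min_i\in\C$ shared and $\min_j\in D^i$;
\item $(\gamma^i_{D^j},v_j)\mapsto\gamma^i_{\s}\gamma^j_{D^i}$ if $\min_j\in D^i$ and $\min_i\in\s$ subordinate.
\end{itemize}

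Alternatively, you can finish your own version with a count for each non-adjacent pair. Up to exchanging $i$ and $j$, the elements of $\B_2$ missed by Type 1 and by unexceptional Type 2 elements are as follows:
\begin{itemize}
\item $\gamma^i_{D^j}\gamma^j_{\C_0}$ and $\gamma^i_{\C_0}\gamma^j_{D^i}$ when $\min_i\in D^j$ and $\min_j\in\C_0$ is shared;
\item only $\gamma^i_{D^j}\gamma^j_{\s_0}$ when $\min_i\in D^j$ and $\min_j\in\s_0$ is subordinate;
\item none otherwise (both minima in the dominant components, or a common minimum in a shared component).
\end{itemize}
In these three cases the exceptional elements are $(\gamma^i_{\C_0},v_j)$ and $(\gamma^j_{D^i},v_i)$; then $(\gamma^j_{D^i},v_i)$ alone; then none. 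So they can be matched to the missed products one for one.
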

\begin{proof}
We will start by  constructing $\varphi$ explicitly for each of the types of elements of $\B_1$ considering several possible cases. During the construction, we will denote by boldface  $\boldsymbol{A}$ the petal $A$ containing the minimal element $\min_i$ in the based partition $\tau_i$. 

    \noindent{\textbf{Type 1:}} Rank 2 vertex types $\tau$.

     {\textbf{1.1:}} $\tau$ has a single non-trivial based partition $\tau_i=\{\{v_i\},P_1,P_2,P_3\}$:  We may assume that  $\min_i\in P_1$. Then, since $\tau$ is essential, $P_2$ and $P_3$ cannot admit a split, so they must consist of a unique connected component, because otherwise they could be split, since all of the other based partitions are trivial. We set 
        $$
        \varphi(\tau)= \gamma^i_{P_2}\gamma^i_{P_3}\in\B_2.
        $$

     {\textbf{1.2:}} $\tau$ has two non-trivial based partitions $\tau_i=\{\{v_i\},P_1,P_2\}$ and $\tau_j=\{\{v_j\},Q_1,Q_2\}$:
     We  assume that that $\min_i\in P_1$ and $\min_j\in Q_1$. By Lemma \ref{lem:rank2Partitions}, we have two possible cases:
        
         {\textbf{1.2.1:}}  $v_i\notin \lk(v_j)$, $P_2=\C$ a shared component and $Q_2=D^i\cup\C$. We set
         $$\varphi(\tau)= \gamma^i_{P_2}\gamma^j_{D^i}\in\B_2$$

         {\textbf{1.2.2:}} In other case, since $P_2$ and $Q_2$ are formed by a unique connected component, we set:
        $$\varphi(\tau)= \gamma^i_{P_2}\gamma^j_{Q_2}\in\B_2.$$
        Note that $P_2$ and $Q_2$ must be different, because if $P_2=\C=Q_2$ there would be a crossing.

\noindent 

\noindent{\textbf{Type 2:}} Pairs $(\tau,v_j)$ for $\tau$ a rank 1 essential vertex and $v_j\in\G$. In $\tau$ every based partition is trivial except for  $\tau_i=\{\{v_i\},P_1,P_2\}$. We  assume that   $\min_i\in P_1$, so $P_2$ must consist of a unique connected component; in other case, it would admit a split. Again, we consider several subcases. 

{\textbf{2.1:}}  Assume $v_i\notin \st (v_j)$,  $\min_i\neq\min_j$ one of the $\min_i,\min_j$ is in a shared component and the other in a dominant component.

{\textbf{2.1.1:}}  If $\min_j\in\C$, a shared component, then, by Lemma \ref{lem:minimal-component},  $\min_i\in D^j$. For the pair $(\tau,v_j)$ with $\tau_i=\{\{v_i\},P_1,\C\}$, we set:
$$
\varphi(\tau,v_j)=\gamma^i_{\boldsymbol{D}^j}\gamma^j_{\boldsymbol{\C}}
$$

{\textbf{2.1.2:}} If $\min_i\in\C$, a shared component, then, by Lemma \ref{lem:minimal-component},  $\min_j\in D^i$. For the pair $(\tau,v_j)$ with $\tau_i=\{\{v_i\},P_1,D^j\}$, we set:
$$
\varphi(\tau,v_j)=\gamma^i_{D^j}\gamma^j_\C
$$

{\textbf{2.2:}} Assume $v_i\notin \st (v_j)$, $\min_j\in D^i$,   $\min_i\in\mathcal{S}$, a subordinate component, and the only non-trivial based partition in $\tau$ is $\tau_i=\{\{v_i\},P_1,D^j\}$. We set:
 $$
    \varphi(\tau,v_j)= \gamma^i_{\boldsymbol{\s}}\gamma^j_{\boldsymbol{D}^i}
    $$

    {\textbf{2.3:}} In any other case, for the pair $(\tau,v_j)$ with $\tau_i=\{\{v_i\},P_1,P_2\}$ such that  $\min_i\in P_1$ and  $\min_j\in B$, we set:
    $$
    \varphi(\tau,v_j)= \gamma^i_{P_2}\gamma^j_{\boldsymbol{B}}
    $$

\noindent{\textbf{Type 3:}} Edges $(v_i,v_j)\in E(\G)$. Consider the connected components $A$ of $\G-\st(v_i)$ and $B$ of $\G-\st(v_j)$  that contain the minimal elements and set
 $$
    \varphi(i,j)= \gamma^i_{\boldsymbol{A}}\gamma^j_{\boldsymbol{B}}
    $$

During the construction we have already checked that the elements chosen in the image of $\varphi$ lie in $\B_2$; equivalently,  that they are of the form $\gamma^i_{A}\gamma^j_{B}$ where  $A$ and $B$ consist of a unique  connected component in $\G-\st(v_i)$ and $\G-\st(v_j)$ respectively, $A\neq B$, and $A,B$ are not the respective dominant components at the same time. 

It remains to prove that the map is surjective. Let $\gamma^i_{A}\gamma^j_{B}\in\B_2$.


\noindent{\textbf{Case I:}}  Assume $\min_i\notin A$ and $\min_j\in B$. Let $\tau$ be the rank 1 vertex type with only non trivial based partition  $\tau_i=\{\{v_i\}, A^C, A\}$, here $A^C$ denotes $(\G-\st(v_i))-A$. We have that $\tau$ is essential, since $A$ consists of a unique connected component. Hence, $(\tau,v_j)$ is of type 2.3 and $\varphi(\tau,v_j)=\gamma^i_{A}\gamma^j_{B}$.

\noindent{\textbf{Case II:}} Assume $\min_i\in A$ and $\min_j\in B$ (so $v_i\neq v_j$).
\begin{itemize}
    \item If $v_i\in\lk(v_j)$, then $(v_i,v_j)$ is of type 3 and $\varphi(v_i,v_j)=\gamma^i_{A}\gamma^j_{B}$.
    \item If $v_i\notin\lk(v_j)$, then $A$ and $B$ cannot be shared and equal, and they cannot be both dominant. By Lemma \ref{lem:minimal-component}, one must be dominant and the other is either shared or subordinate.

    (1) Assume $A=D^j$ dominant and $B$ shared. Let $\tau$ be the rank 1 vertex type with only non trivial based partition  $\tau_i=\{\{v_i\}, B^C, B\}$. We have that $\tau$ is essential, since $B$ consists of a unique connected component (note that $\min_i\notin B$). Hence, $(\tau,v_j)$ is of type 2.1.1 and $\varphi(\tau,v_j)=\gamma^i_{A}\gamma^j_{B}$.

    (2) Assume $B=D^i$ dominant and $A$ subordinate. Let $\tau$ be the rank 1 vertex type with only non trivial based partition  $\tau_i=\{\{v_i\}, (D^j)^C, D^j\}$. Again, $\tau$ is essential. Hence, $(\tau,v_j)$ is of type 2.2 and $\varphi(\tau,v_j)=\gamma^i_{A}\gamma^j_{B}$.
\end{itemize}

\noindent{\textbf{Case III:}} Assume $\min_i\notin A$ and $\min_j\notin B$.
\begin{itemize}
    \item If $v_i=v_j$, let  $\tau$ be the rank 2 essential vertex type with only non trivial based partition  $\tau_i=\{\{v_i\}, (A\cup B)^C, A, B\}$. Then, $\tau$ is of type 1.1 and $\varphi(\tau)=\gamma^i_{A}\gamma^i_{B}$.
    \item Assume that  $\tau_i=\{\{v_i\}, A^C, A\}$ and $\tau_j=\{\{v_j\}, B^C, B\}$ are compatible. Then the rank 2 vertex type $\tau$ with only non trivial based partitions  $\tau_i$ and $\tau_j$ is essential, so it is of type 1.2.2 and $\varphi(\tau)=\gamma^i_{A}\gamma^j_{B}$.
    \item In other case: the fact that $\tau_i$ and $\tau_j$ are not compatible implies that one of $A,B$  must be  dominant and the other shared. We assume that $A=D^j$, so $B$ is shared.

    (1) If  $\min_i\in B$, let $\tau$ be the rank 1 essential vertex type with only non trivial based partition  $\tau_i=\{\{v_i\}, A^C, A\}$. Then, $(\tau,v_j)$ is of type 2.1.2 and $\varphi(\tau,v_j)=\gamma^i_{A}\gamma^j_{B}$.

    (2) If  $\min_i\notin B$, let $\tau$ be the rank 1 vertex type with only non trivial based partitions $\tau_i=\{\{v_i\}, (A\cup B)^C, A\cup B\}$ and $\tau_j=\{\{v_j\}, B^C,  B\}$. The petal $A$  does not admit a split because a crossing would appear, so $\tau$ is essential. Hence, $\tau$ is of type 1.2.1 and $\varphi(\tau)=\gamma^j_{B}\gamma^i_{A}$.

\end{itemize}

\end{proof}

So we have
\begin{teo} The map $\phi$ of Corollary  \ref{cor:H2_1} is an isomorphism in degrees 1 and 2, so, in particular
    $$
H^2(\PAut(A_\G))\cong R_2.
    $$
\end{teo}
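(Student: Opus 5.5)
The plan is to deduce the theorem from a rank comparison in each degree, combining the epimorphism $\phi$ of Corollary \ref{cor:H2_1} with the counting map of Proposition \ref{prop:baseIsomor}. The key observation is that a surjective homomorphism between free abelian groups of the same finite rank is an isomorphism. So it suffices to show that $R_1$ and $R_2$ are free abelian of ranks at most those of $H^1(\PAut(A_\G))$ and $H^2(\PAut(A_\G))$.

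In degree $2$, $H^2(\PAut(A_\G))$ is free abelian of rank $|\B_1|$ by Theorem B, and $R_2$ is free with basis $\B_2$ by the Lemma preceding Proposition \ref{prop:baseIsomor}. Proposition \ref{prop:baseIsomor} gives a surjection of finite sets $\B_1\to\B_2$, so $|\B_2|\le|\B_1|$. On the other hand, $\phi_2\colon R_2\to H^2(\PAut(A_\G))$ is surjective, so $|\B_1|\le|\B_2|$. Hence the ranks coincide, and $\phi_2$ is a surjection $\Z^{m}\to\Z^{m}$. Such a map is injective, since a surjective endomorphism of a finitely generated module over a commutative ring (here $\Z$) is injective (Vasconcelos). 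Therefore $\phi_2$ is an isomorphism.

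In degree $1$, $R_1$ is free on the generators $\gamma^i_A$, because all relations live in degree $2$. Its rank is the number of pairs $(v_i,A)$ with $A$ a connected component of $\G-\st(v_i)$. By Theorem B, the rank of $H^1(\PAut(A_\G))$ is $K_1+N_1$. We must check that this equals $\sum_i c_i$, where $c_i$ is the number of components of $\G-\st(v_i)$. A rank $1$ essential vertex type has a single nontrivial $\tau_i=\{\{v_i\},P_1,P_2\}$ with $\min_i\in P_1$. Since every other based partition is trivial, any non-minimal petal with several components could be split, so $P_2$ must be a single component different from the one containing $\min_i$. This gives $K_1=\sum_i(c_i-1)$, and since $N_1$ is the number of vertices, $K_1+N_1=\sum_i c_i$. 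This also matches the fact that partial conjugations by components minimally generate $H_1$ (Laurence together with the Koban--Piggott presentation, Theorem \ref{teo:conmu}, whose relations are all commutators). The same surjective-between-equal-ranks argument then shows $\phi_1$ is an isomorphism.

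The only delicate point is the direction of $\phi$: Theorem C writes it as $H^*\to R$, while Corollary \ref{cor:H2_1} writes $R\to H^*$. I will work with the map of Corollary \ref{cor:H2_1}, which is the one actually constructed via Propositions \ref{prop:rel1} and \ref{prop:rel2}. The real content lies in the surjectivity of Proposition \ref{prop:baseIsomor}, which is assumed here, so the main obstacle is already handled. What remains is ensuring the rank equality is stated for free $\Z$-modules and not merely dimensions, which the freeness results above provide.
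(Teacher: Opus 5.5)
Your proposal is correct and follows the paper's own argument in degree $2$: surjectivity of $\phi$ gives $\mathrm{rank}\, R_2\geq \mathrm{rank}\, H^2(\PAut(A_\G))$, and the surjection $\B_1\to\B_2$ of Proposition~\ref{prop:baseIsomor} gives the reverse inequality. You also spell out two points the paper leaves implicit: the degree-one case, via $K_1=\sum_i(c_i-1)$ so that $K_1+N_1=\sum_i c_i=\mathrm{rank}\, R_1$, and the fact that a surjection between free abelian groups of equal finite rank is injective.
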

\begin{proof}
    The fact that $\phi$ is surjective implies that $\textrm{dim}(R_2)\geq \textrm{dim}(H^2(\PAut(A_\G))$ and Proposition \ref{prop:baseIsomor} implies that both dimensions are equal.
\end{proof}

\begin{conj}[Generalized Brownstein-Lee Conjecture]
The map $\phi$ of Corollary \ref{cor:H2_1} is an isomorphism.
\end{conj}

\section{The Torelli subgroup}

Let $G$ be a group, the Torelli subgroup  $\IA(G)$  is the kernel of the  homomorphism:
$$
\phi:\mathrm{Aut}(G)\rightarrow \mathrm{Aut}(G^{ab})\cong \mathrm{Aut}(H_1(G)).
$$
In general, not much is known about these groups. In the case of a RAAG $\A$, Day found in \cite{Day}  a finite generating system for $\IA(\A)$, and Wade proved in his PhD thesis \cite{Wade} that $H_1(\IA(\A))$ is the free abelian group on that set of generators. Following the idea of Corollary 6.9 in \cite{JMcCM} (pointed by Fred Cohen), we can obtain some information about $H^*(\IA(\A))$ from Theorem B.

\begin{cor}
The injection $\PAut(\A)\hookrightarrow\IA(\A)$ induces a split epimorphism
$$
    H^*(\IA(\A)) \twoheadrightarrow H^*(\PAut(A_\G)).
    $$
Furthermore, the suspension of $B\PAut(\A)$ is homotopic to a bouquet of spheres, and it is a retract of the suspension of $B\IA(\A)$.

    \label{cor:IA}
\end{cor}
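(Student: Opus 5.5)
Following Corollary 6.9 of \cite{JMcCM}, the plan is to produce a group-theoretic retraction of $\PAut(\A)$ off $\IA(\A)$ at the level of cohomology. A retraction of groups $\IA(\A)\to\PAut(\A)$ is not available, so we use abelian quotients. First I check that $\PAut(\A)\subseteq \IA(\A)$. A partial conjugation $C^v_A$ sends each generator $w$ to $w$ or to $vwv^{-1}$, and both have the same image in $\A^{ab}$. By Theorem \ref{teo:partialGen}, every element of $\PAut(\A)$ therefore acts trivially on $H_1(\A)$, which gives the injection $\iota:\PAut(\A)\hookrightarrow\IA(\A)$.

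Next I want a map $r:H^1(\PAut(\A))\to H^1(\IA(\A))$ with $\iota^*\circ r=\mathrm{id}$. By Theorem \ref{teo:presentation}, $H_1(\PAut(\A))$ is free abelian on the partial conjugations $C^u_A$, with $A$ a single connected component of $\G-\st(u)$. By Day \cite{Day} and Wade \cite{Wade}, $H_1(\IA(\A))$ is free abelian on Day's generating set, and this set contains these partial conjugations. The key check, and I expect it to be the main obstacle, is that the map $H_1(\PAut(\A))\to H_1(\IA(\A))$ sends the basis $\{C^u_A\}$ injectively onto a subset of Wade's basis, or at least onto a direct summand. I would verify this by comparing Day's generators, namely partial conjugations by single components together with the transvection-commutator type elements, with the generators of $\PAut(\A)$. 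Once this holds, the map on $H_1$ is split injective, so $\iota^*:H^1(\IA(\A))\to H^1(\PAut(\A))$ is split surjective.

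The one-dimensional classes $\gamma^u_A$ then lift to classes in $H^1(\IA(\A))$. By Corollary \ref{cor:onedimPAut}, $H^*(\PAut(\A))$ is generated by these classes, and $\iota^*$ is a ring map. Hence $\iota^*$ is surjective in every degree, since every product of the $\gamma$'s is the image of the corresponding product of the lifts. Because $H^*(\PAut(\A))$ is free abelian in each degree by Theorem B, the surjection splits as abelian groups in each degree.

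For the homotopy statement, I would follow the suspension argument of \cite{JMcCM}. Each one-dimensional class $\gamma$ in $H^1(\IA(\A))$ is a map $B\IA(\A)\to S^1$. For each basis element of $H^q(\PAut(\A))$, written as a product of $q$ classes $\gamma$, the product map $B\IA(\A)\to (S^1)^q$ followed by the stable splitting $\Sigma (S^1)^q\simeq\bigvee \Sigma S^{|J|}$ projects to a sphere $S^{q+1}$. Wedging these maps over the basis gives $f:\Sigma B\IA(\A)\to W=\bigvee S^{q+1}$. Precomposing with $\Sigma B\iota$ gives a map $\Sigma B\PAut(\A)\to W$ that is an isomorphism on integral homology, because the cohomology is free and the classes form a basis. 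Since both spaces are simply connected suspensions, Whitehead's theorem makes this map a homotopy equivalence. So $\Sigma B\PAut(\A)\simeq W$, and $\Sigma B\PAut(\A)$ is a retract of $\Sigma B\IA(\A)$ via $(f\circ\Sigma B\iota)^{-1}\circ f$.
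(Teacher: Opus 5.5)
Your proposal is correct and follows the paper's route. The paper simply invokes Corollary 6.9 of \cite{JMcCM}, using that $H_1(\IA(\A))$ is free abelian on Day's generators and that $H^*(\PAut(\A))$ is generated in degree 1, and you have written that argument out. Your ``main obstacle'' is in fact immediate once you grant that Day's basis contains the single-component partial conjugations.

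One point to tighten: a $\mathbb{Z}$-basis of $H^q(\PAut(\A))$ need not consist of monomials in the $\gamma^u_A$. Every basis element is nonetheless realized by a map $\Sigma B\IA(\A)\to S^{q+1}$, because such maps can be added using the pinch map, so the wedge construction still goes through. Alternatively, the bases produced by Theorem A and Leray--Hirsch are already products of degree-one classes.
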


\begin{proof}
    Once it has been proven that $H^*(\PAut(\A))$ is generated by the one-dimensional classes (Corollary \ref{cor:onedimPAut}), and that $H_1(\IA(\A))$ is free abelian (Theorem 4.23 in \cite{Wade}), the result follows by the same arguments as in  Corollary 6.9 in \cite{JMcCM}.
\end{proof}

\appendix

\section{Algorithm to compute the $K_q$:} \label{appendix}

The main theorems of this paper (Theorem A and B) give us a formula to compute the cohomology groups $H^q(\POut(\A))$ and $H^q(\POut(\A))$. In the case when $\A$ is the free group $F_n$, it is possible to give a closed formula in terms of $n$ only (Lemma 5.2 in \cite{JMcCM}). However, that cannot be done for arbitrary RAAGs; we can, nonetheless, implement an algorithm that can be used for explicit computations. 

The idea is simple: for each vertex $v_i\in\G$, we take all the connected components in $\G-\st(v_i)$ and combine them in all possible ways to obtain $\mathcal{P}_i$, the set of the based partitions associated to each $v_i$. Then, we consider the cartesian product of the $\mathcal{P}_i$'s. We have to remove from the cartesian product the elements that are not vertex types, so for each SIL-pair $(v,w)$ we check which based partitions cross and we remove  all the elements which have those two based partitions at the same time.

Once we have all the vertex types, we select those of rank $q$  and check if there is some based partition which does not contain the minimal element and admits a split; if there is none, then the vertex type is essential.


A code implemented for Python which computes the Whitehead poset, the number of essential vertices for each rank, the number of simplices in $|\Wh_\G|$ and the $E^1$ page for the $\POut(\A)$ action on $\MM_\G$ can be found at \url{https://github.com/peioardaiz-gale/Codes-for-the-generalized-Whitehead-poset/blob/main/whitehead-poset_essential-vertices_E1-page.py}.

\begin{algorithm}[H]
    
\caption{Algorithm to compute $K_q$, the number of essential vertex types of rank $q$}
\begin{algorithmic}[1]

    \For {$1\leq q\leq n-2$}
    \State $K_q\gets 0$
    \EndFor
    \For {$1\leq i\leq n$}
    \State Compute a list $\mathcal{P}_i$ with all the valid based partitions  of $\G-\st(v_i)$
    \EndFor
     \For {$\tau\in\mathcal{T}$}
        \If {the based partitions in $\tau\in \mathcal{T}$ are pairwise compatible}
            \State $\tau \in \Wh_\G$
        \EndIf
    \EndFor
    \For {$1\leq q\leq $ MAX$(rk(\tau))$}
    \State $K_q\gets 0$
        \For {$\tau\in\Wh_\G$}
        \If {$rk(\tau)=q$ }
            \If {$\tau$ is essential}
            \State $K_q++$
            \EndIf
        \EndIf
        \EndFor
    \EndFor

\end{algorithmic}

\end{algorithm}

\end{document}